\newcommand\restr[2]{{
  \left.\kern-\nulldelimiterspace 
  #1 
  \vphantom{\big|} 
  \right|_{#2} 
  }}
\newcommand{\ou}{%
  \mathrel{%
    \vcenter{\offinterlineskip
      \ialign{##\cr$L+1$\cr\noalign{\kern-1.5pt}$1$\cr}%
    }%
  }%
}
\newcommand\restrict[3]{{
  \left.\kern-\nulldelimiterspace 
  #1 
  \vphantom{\big|} 
  \right|^{#3}_{#2} 
  }}
\newcommand{\jmp}[1]{\left[\!\left[#1\right]\!\right]}                     
\newcommand{\normH}[2]{\lVert #1\rVert_{H^{1}(#2)}}
\newcommand{\normHtwo}[2]{\lVert #1\rVert_{H^{2}(#2)}}
\newcommand{\normL}[2]{\lVert #1\rVert_{L^{2}(#2)}}
\newcommand{\normLp}[3]{\lVert #1\rVert_{L^{#3}(#2)}}
\newcommand{\normHs}[2]{\lVert #1\rVert_{H^{1}(#2)}^{2}}
\newcommand{\normLs}[2]{\lVert #1\rVert_{L^{2}(#2)}^{2}}
\newcommand{\normLinf}[2]{\lVert #1\rVert_{L^{\infty}(#2)}}
\newcommand{\normWoneinf}[2]{\lVert #1\rVert_{W^{1,\infty}(#2)}}
\newcommand{\normHj}[2]{\lVert #1\rVert_{H^{s}(#2)}}
\begin{document}

\title{Analysis of the Rigorous Coupled Wave Approach for $s$-Polarized Light in Gratings\thanks{Supported by the US National Science Foundation (NSF) under grant number DMS-1619904 and DMS-1619901.}}
\titlerunning{Analysis of the RCWA for $s$-Polarized Light in Gratings}
%

\author{Benjamin J. Civiletti\inst{1}, Akhlesh Lakhtakia
\inst{2} \and
Peter B. Monk\inst{1} }
\authorrunning{B. J. Civiletti et al.}
%

\institute{Department of Mathematical Sciences,
University of Delaware, Newark, DE \and  Department of
Engineering Science and Mechanics, Pennsylvania State University, 
University Park, PA}

\maketitle              
\begin{abstract}
We study the convergence properties of the two-dimensional Rigorous Coupled Wave Approach (RCWA) for $s$-polarized monochromatic incident light. The RCWA  is widely used to solve electromagnetic boundary-value problems where the relative permittivity varies periodically in one direction, i.e., scattering by a grating. This semi-analytical approach expands all the electromagnetic field phasors as well as the relative permittivity as Fourier series in the spatial variable along the direction of periodicity, and 
also replaces the  relative permittivity with a stairstep approximation along the direction normal to the direction of periodicity. Thus, there is error due to Fourier truncation and also due to the approximation
of grating permittivity. We prove that the RCWA  is a Galerkin scheme, which allows us to employ techniques borrowed from the Finite Element Method to analyze the error. An essential tool is a Rellich identity that shows that certain continuous problems have unique solutions that depend continuously on the data with a continuity constant having explicit dependence on the relative permittivity. We prove that the RCWA  converges with an increasing number of retained Fourier modes and with a finer approximation of the grating interfaces. Numerical results show that our convergence results for increasing the number of retained Fourier modes are seen in practice, while our estimates of convergence in slice thickness are pessimistic.  

\keywords{RCWA  \and convergence \and variational methods \and grating.}
\end{abstract}

\section{Introduction}
This paper provides an error analysis of the two-dimensional (2D) Rigorous Coupled Wave Approach (RCWA), one of several methods to solve electromagnetic scattering problems involving periodic structures \cite{Gaylord,Faryad,Akhlesh2}.  This semi-analytical approach requires all the electromagnetic field phasors as well as the relative permittivity to be expanded as Fourier series of the spatial variable along the direction of periodicity. After substitution into Maxwell's equations for time-harmonic electromagnetic fields, an infinite system of Ordinary Differential Equations (ODE) for the Fourier modes is obtained. For computational tractability, the system is truncated so that only a finite number of Fourier modes are retained. Along the direction normal to the direction of periodicity, the domain is then discretized into thin slices, and on each slice the relative permittivity is approximated by a function that is constant in the thickness direction so that the solutions to the ODEs in each slice can be obtained analytically. This allows for a fast solution algorithm to be derived \cite{Faryad,Akhlesh2,Gaylord3}, by enforcing continuity of the tangential components of electromagnetic phasors   on the inter-slice boundaries. Furthermore, suitable transmission conditions are satisfied on the top and bottom of the domain. In this way, the solutions in each slice are stitched together to form the solution on the entire domain.  

The RCWA  has its roots in coupled wave analysis for diffraction problems, e.g., in a single layer with a sinusoidal spatial variation of the relative permittivity  \cite{coupled}. The formal approach was  proposed in the early 1980s by Moharam and Gaylord \cite{Gaylord2} and a stable solution algorithm was devised several years later \cite{Gaylord3}. Subsequently, the near-field convergence with respect to the number of retained Fourier modes was drastically improved by Li \cite{Li}. The approach is now a workhorse for obtaining rapid simulations of the electromagnetic field phasors in a grating. It has been used, for example, to study the excitation of surface plasmon-polariton waves for optical sensing \cite{Homola} and in the design process of   solar cells \cite{LMAnderson,Solcore}.  Some open problems for the RCWA   were discussed by  Hench and  Strako\v{s} \cite{Hench}. One open problem discussed is whether the discretized solution approximates the true solution, and if so, to what order. We address this open problem in this paper.

The contribution of this paper is that we show that the RCWA   is a Galerkin scheme, which allows us to analyze its convergence properties. To analyze the convergence rate with respect to slice thickness, we develop an approximation theory for this type of spatial discretization.  Furthermore, we generalize a Rellich identity and an \textit{a-priori} estimate for two relevant continuous problems, and use them to show the existence and uniqueness of the solutions. To apply these continuous results to the discrete problem, we show that under certain non-trapping conditions, the continuity constant for the \textit{a-priori} estimate does not depend on slice thickness.

This paper is organized as follows. In Section \ref{Var}, we first introduce the appropriate mathematical problem: an inhomogeneous Helmholtz equation with quasi-periodic boundary conditions. After recalling the angular spectrum representation for the radiation condition, we then   give the variational formulation of our problem. In Section \ref{Rellich}, we derive a Rellich identity for the Helmholtz equation and in Section \ref{priori}, assuming the real part of the relative permittivity is positive, we give an \textit{a-priori} estimate where the continuity constant is explicit. This explicit dependence is needed both for our analysis of stairstepping, as well as in a duality argument appearing in the analysis of convergence in the number of the retained Fourier modes. This restricts us to considering non-trapping domains, as discussed later in Section \ref{priori}. The case where there is light trapping is not covered by our theory, although convergence is seen in practice \cite{Shuba1,Faiz}. In Section \ref{metallic} we show a similar \textit{a-priori} estimate holds when the real part of the relative permittivity is negative. In Section \ref{RCWA}, we show that the RCWA  is a Galerkin scheme. We then apply tools applicable to the Finite Element Method (FEM) in order to show that the RCWA  converges with respect to the number of retained Fourier modes in Section \ref{Fourier} and also with respect to the stairstep approximation  of the grating interfaces  in Section \ref{sslice}. These are the main results of the paper. Finally, in Section \ref{Numerical}, we compare the RCWA solution to a refined FEM solution to test our prediction of the order of convergence.

\section{Radiation Condition and Variational Formulation} \label{Var}
We consider linear optics with an $\exp(-i\omega t)$ dependence on time $t$, where $i=\sqrt{-1}$
and $\omega$ is the angular frequency of   light. Under this assumption, from Maxwell's equations one can show \cite{Hench,Guenther} that the electric field $\bm{E}$ solves
\begin{equation} \label{Maxwell}
\Delta \bm{E}=- \omega^{2}\mu_{0}\varepsilon_{0}\varepsilon
\bm{E}-\nabla \bigg(\bm{E} \cdot \frac{\nabla \varepsilon}{\varepsilon} \bigg), 
\end{equation}
where $\varepsilon=\varepsilon(x_{1},x_{2})$ is the spatially dependent relative permittivity,  and $\varepsilon_{0}$ and $\mu_{0}$ are the permittivity and permeability, respectively, of free space (i.e., vacuum). The domain under consideration is assumed to be invariant in the $\bm{e}_{3}=(0,0,1)$ direction, so the electric field is invariant in the $\bm{e}_{3}$ direction, i.e.

\[\bm{E}=\bm{E}(x_{1},x_{2}). \]
For $s$-polarized light, we also have that $\bm{E}=(0,0,E_{3})$, and so the last term on the right hand side of \eqref{Maxwell} is zero. The wavenumber in air is denoted by $\kappa= {\omega}/{c_{0}}$ and the speed of light in air is $c_{0}=1/\sqrt{\varepsilon_{0}\mu_{0}}$. We obtain the vector Helmholtz equation 
\[\Delta \bm{E}+ \kappa^{2}\varepsilon\bm{E}=0, \]
with $E_{1}=E_{2}=0$. So we see that this reduces to a scalar Helmholtz equation, that we study in this paper. A similar result holds for the $p$-polarization case for the magnetic field $\bm{H}$, but we do not study that problem here. To simplify the notation, from here on $E_{3}$ is denoted by $u$.

We now present the standard mathematical formulation of the basic scattering problem: a Helmholtz equation with a periodically variable relative permittivity ${\varepsilon}$. This work pertains to a 2D domain $\Omega=\{\bm{x}\in \mathbb{R}^{2}, 0<x_{1}<L_{x}, -H<x_{2}<H \},$ where $H>0$ and $L_{x}>0$. The relative permittivity $\varepsilon$ is assumed to be $L_{x}$ periodic in $x_{1}$ and invariant in $x_{3}$. 
An $s$-polarized plane wave with   electric field
phasor polarized in the $\bm{e}_{3}$ direction is incident on $\Omega$ with incidence angle $\theta$.
The third component of the incident electric field phasor can be stated as 
\[u^{\text{inc}}(x_{1},x_{2})= \exp\left[i\kappa\left(x_1\sin\theta-x_2\cos\theta\right) \right].\]
Since the structure is invariant along the $\bm{e}_{3}$ direction, the
total electric field everywhere can be stated as $u \bm{e}_{3}$, where $u$
is the solution of the Helmholtz problem
\begin{align}
    \label{eqn:PDE}
    \Delta u+\kappa^{2}{\varepsilon} u &=f \hspace{2.2cm} \text{in} \ \Omega,\\
    \exp(-i\alpha L_{x})u(0,x_{2})&=u(L_{x},x_{2}) \hspace{1cm}  \forall \ x_{2},\\
    \exp(-i\alpha L_{x})\frac{\partial }{\partial x_{2}}u(0,x_{2})&=\frac{\partial }{\partial x_{2}}u(L_{x},x_{2}) \ \ \ \forall \ x_{2},
\end{align}
where $\alpha=\kappa\sin\theta. $ Here, $f=\kappa^{2}(1-{\varepsilon})u^{i}$, but will be chosen more generally later. 

Inside $\Omega$, we assume that there are $I$ interfaces $\hat{\Gamma}_{k}$ for $1\leq k\leq I$. The interfaces are defined as
\[\hat{\Gamma}_{k}=\{\bm{x}\in\mathbb{R}^{2}, g_{k}(x_{1})=x_{2}\}, \] 
where $g_{k}:\mathbb{R}\to\mathbb{R}$ is a piecewise $C^{2}$ function except possibly at a finite number of values $x_{1k},x_{2k},\cdots,x_{N_{k}k}$. Let $\hat{H}(x)$ be the Heaviside function, and 
\[\hat{\Pi}_{ab}=\hat{H}(x_{1}-a)-\hat{H}(x_{1}-b). \] 
Then the $g_{k}$ can be written as
\begin{equation} \label{interface}
    g_{k}=\sum_{l=0}^{N_{k}}\hat{\Pi}_{x_{lk}x_{(l+1)k}}\phi_{lk}, 
\end{equation}
where the $\phi_{lk}$ are Lipschitz-continuous with $x_{0k}=0$ and $x_{(N_{k}+1)k}=L_{x}$. At the discontinuities, we require that 
\[[g_{k}]_{x_{lk}}=g_{k}(x^{+}_{lk}) -g_{k}(x^{-}_{lk}) \neq 0,\] 
for all $1\leq k \leq I$ and $1\leq l\leq N_{k}$, where $g_{k}(x^{+}_{lk})$ is the limit taken from the right and $g_{k}(x^{-}_{lk})$ is the limit taken from the left. We define the values $\mu_{lk}^{+}=\max\{ g_{k}(x^{+}_{lk}),g_{k}(x^{-}_{lk})\}$ and $\mu_{lk}^{-}=\min\{ g_{k}(x^{+}_{lk}),g_{k}(x^{-}_{lk})\}$ along
with the sets
\[W_{lk}= \{\bm{x}\in \mathbb{R}, x_{1}=x_{lk}, \mu_{lk}^{-}\leq x_{2} \leq  \mu_{lk}^{+}\}, \] 
for   $1\leq k \leq I$ and $1\leq l \leq N_{k}$. We therefore define a stairstep interface to be 
\[\Gamma_{k}=\hat{\Gamma}_{k} \cup \bigg(\bigcup_{l=1}^{N_{k}}W_{lk}\bigg). \]
An illustration of a suitable domain $\Omega$ with three interfaces is given in Fig.~1. We require that the interfaces do not intersect, so that for some $\delta>0$, we have \[\delta+\max_{0\leq x_{1}\leq L_{x}}g_{k-1}(x_{1})<g_{k}(x_{1})<-\delta+\min_{0\leq x_{1} \leq L_{x}}g_{k+1}(x_{1})\]
for all $2\leq k \leq I-1$, and the interfaces are bounded away from $\Gamma_{H}$ and $\Gamma_{-H}$, namely
\begin{align*}
    \delta-H<&g_{1}(x_{1})<-\delta+\min_{0\leq x_{1}\leq L_{x}}g_{2}(x_{1}),\\
    \max_{0\leq x_{1}\leq L_{x}}&g_{k-1}(x_{1})<g_{I}(x_{1})<-\delta+H.
\end{align*}
Thus, the interfaces $\Gamma_{k}$ separate $\Omega$ into $I+1$ subdomains, namely
\[\Omega_{k}=\{(x_{1},x_{2})\in \Omega: \ g_{k-1}(x_{1})<x_{2}<g_{k}(x_{1}) \}, \]
for $1\leq k \leq I+1$, where $g_{0}(x_{1})=-H$ and $g_{I+1}(x_{1})=H$. 

\begin{figure}[h]
	\centering
	\includegraphics[width=2.5in,height=3.5in]{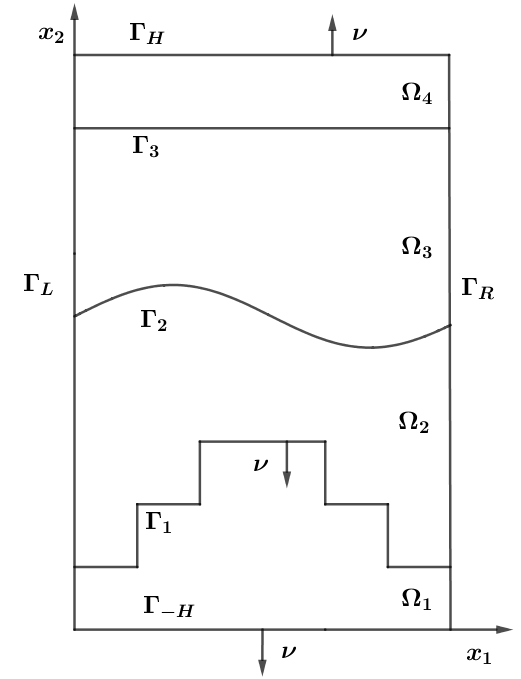}
	\caption{Geometry of the scattering problem, with $I=3$ interfaces. The domain $\Omega$ lies between the two lines $\Gamma_{H}=\{ \bm{x}\in \mathbb{R}^{2}, 0<x_{1}<L_{x}, x_{2}=H\}$ and $\Gamma_{-H}=\{ \bm{x}\in \mathbb{R}^{2}, 0<x_{1}<L_{x}, x_{2}=-H\}$. In each $\Omega_{k}$ the relative permittivity ${\varepsilon}$ is in $C^{(1,1)}$, but can jump over each interface $\Gamma_{k}$. The quasi-periodic boundaries are $\Gamma_{R}=\{\bm{x}\in \mathbb{R}^{2}, x_{1}=L_{x}, -H<x_{2}<H \}$ and $\Gamma_{L}=\{\bm{x}\in \mathbb{R}^{2}, x_{1}=0, -H<x_{2}<H \}$. The interface $\Gamma_{1}$ is a stairstep. }
\end{figure}

Furthermore, we have the following assumptions on ${\varepsilon}$. First, ${\varepsilon} \in C^{(1,1)}(\overline{\Omega_{k}})$ for all $k=1,2,\cdots, I+1$. Also, ${\varepsilon}$ is allowed to be complex valued in $\Omega$, and either $\left\{\Re ({\varepsilon}) >0, \Im ({\varepsilon}) \geq 0\right\}$ or $\left\{\Re ({\varepsilon}) \leq 0, \Im ({\varepsilon}) > 0\right\}$ in $\Omega$.  A standard assumption from the literature is that ${\varepsilon}$ is piecewise constant in $\Omega$, but we are also interested in the case where ${\varepsilon}$ is a smooth function in order to improve efficiency of solar cells \cite{Faiz,AndersonSch}. Typically, we take the relative  permittivity in the upper half space $\Omega_{H}^{+}=\{\bm{x}\in \mathbb{R}^{2}: x_{2} > H \}$ to be ${\varepsilon}^{+}=1$ and, similarly, the relative permittivity ${\varepsilon}^{-}=1$  in the lower half space $\Omega_{H}^{-}=\{\bm{x}\in \mathbb{R}^{2}: x_{2}<-H \}$. Thus, the half spaces above and below $\Omega$ are air. 

On each interface $\Gamma_{k}$, we choose the unit normal to point downwards. By $\jmp{\phi}_{\Gamma_{k}}$ we denote the jump of a function $\phi$ across the interface $\Gamma_{k}$. Thus, 
\[\jmp{\phi}_{\Gamma_{k}}= \restrict{\phi}{\Gamma_{k}}{+}-\restrict{\phi}{\Gamma_{k}}{-}\,, \]
where $\restrict{\phi}{\Gamma_{k}}{+}$ is the limit taken from $\Omega_{k+1}$ and $\restrict{\phi}{\Gamma_{k}}{-}$ is the limit taken from $\Omega_{k}$, for $1\leq k \leq I$. 

Following DeSanto \cite{DeSanto} and Chandler-Wilde \textit{et al.} \cite{Monk}, we prescribe that   $u$ can be represented  in the upper domain $\Omega_{H}^{+}$ as a linear combination of upward propagating  waves and  evanescent waves. A similar downward propagating expansion holds below the grating also, but we do not give details. For $\Gamma_{H}^{+}$ we now give a brief description of this radiation condition. Since $u$ is quasi-periodic in $\Omega$, we can write 
\begin{equation} \label{eqn:RB}
u(\bm{x})=\sum_{n\in \mathbb{Z}}u_{n}(x_{2})\exp(i\alpha_{n}x_{1}), \end{equation}
for $\bm{x} \in \Omega$, where $\alpha_{n}=\alpha+ {2\pi n}/{L_{x}}$. Now we define $\Gamma_{a}=\{\bm{x}\in \mathbb{R}^{2},x_{2}=a \},$ for $a\geq H$. More precisely, since $u$ solves the Helmholtz problem (2)--(4), we can write the Fourier coefficients of $u$ in $\Omega_{H}^{+}$ as
\begin{equation} \label{eqn:angular}
u_{n}(x_{2})=u_{n}(H)\exp \left[i(x_{2}-H)\sqrt{\kappa^{2}{\varepsilon}^{+}-\alpha_{n}^{2}}\right], \end{equation}
for all $n\in \mathbb{Z}$ and $\bm{x}\in \Omega_{H}^{+}$. From the choice that the modes need to be upward propagating or evanescent waves, we have the aforementioned angular spectrum representation for $u$,
\begin{equation} \label{eqn:Raleigh}
u(\bm{x})=\sum_{n\in \mathbb{Z}}u_{n}(H)\exp\left[i(x_{2}-H)\sqrt{\kappa^{2}{\varepsilon}^{+}-\alpha_{n}^{2}}\right]\exp(i\alpha_{n}x_{1}), 
\end{equation}
valid for all $\bm{x}\in \Omega_{H}^{+}.$ Formally taking the normal derivative of $u$ on $\Gamma_{H}$, we have

\[\restrict{\frac{\partial u}{\partial x_{2}}}{\Gamma_{H}}{}=i\sum_{n\in \mathbb{Z}}u_{n}(H)\beta_{n}\exp(i\alpha_{n}x_{1}), \]
where we assume $\alpha_{n}^{2}\neq k^{2}{\varepsilon}^{+}$ for any $n$ and

\begin{equation*}
\beta_{n}= \begin{cases} 
      \sqrt{\kappa^{2}{\varepsilon}^{+}-\alpha_{n}^{2}} & \alpha_{n}^{2}<\kappa^{2}{\varepsilon}^{+}, \\[5pt]
      i\sqrt{\alpha_{n}^{2}-\kappa^{2}{\varepsilon}^{+}} & \alpha_{n}^{2}> \kappa^{2}{\varepsilon}^{+}.
   \end{cases}
\end{equation*}
Thus, we define the Dirichlet-to-Neumann operator denoted $T^{+}$ on $\Gamma_{H}$, $T^{+}:H^{1/2}(\Gamma_{H})\to H^{-1/2}(\Gamma_{H})$, by

\[(T^{+} \phi)(x_{1})=i\sum_{n\in \mathbb{Z}}\phi_{n}\beta_{n}\exp(i\alpha_{n}x_{1}), \]
for any $\phi\in H^{1/2}(\Gamma_{H}).$
We also define the Dirichlet-to-Neumann operator $T^{-}$ in an analogous way. Now we define the space $V=S\otimes H^{1}((-H,H))$, where $S=\text{span}\{\exp(i\alpha_{n}x_{1}), -\infty<n<\infty \} \subset H^{1}((0,L_{x}))$ is the space spanned by the Fourier basis functions. We also define a truncated space 
\[ S_M=\text{span}\{\exp(i\alpha_{n}x_{1}),-M\leq n \leq M \},\] along with the space $V_{M}=S_{M}\otimes H^{1}((-H,H))$. Let $u\in V$ be a distributional solution of the Helmholtz problem (2)--(4) for a general source $f\in L^{2}(\Omega)$. Multiplying both sides of the Helmholtz equation (\ref{eqn:PDE}) by a test function $v\in V$ and integrating by parts, we get
\[ 
\int_{\Omega}\bigg( \nabla u \cdot \nabla \overline{v}-\kappa^{2}{\varepsilon} u\overline{v}\bigg) -\int_{\Gamma_{H} \cup \Gamma_{-H}}\overline{v}\nabla u \cdot \nu -\int_{\Gamma_{R}\cup \Gamma_{L}}\overline{v}\nabla u \cdot \nu=-\int_{\Omega}f\overline{v},
\]
for all $v\in V$, where the overbar denotes complex conjugation. Here, we used the fact that the integrals on the left and right boundaries cancel, because
\begin{align*}
\overline{v}_{R}\nabla u_{R}\cdot \nu_{R}&=-\big[\exp(-i\alpha L_{x})\overline{v}_{L}\big]\big[\exp(i\alpha L_{x})\nabla u_{L}\cdot \nu_{L}\big]\\
&=-\overline{v}_{L}\nabla v_{L}\cdot \nu_{L}
\end{align*} 
 follows by quasi-periodicity. The remaining normal derivatives can be replaced using the Dirichlet-to-Neumann operator. This leads to the variational problem of finding $u\in V$ such that
\begin{equation}
    b_{{\varepsilon}}(u,v)=-\int_{\Omega}f \overline{v}
    \label{eqn:problem1}
\end{equation}
for all $v\in V$, where the sesquilinear form $b_{{\varepsilon}}(\cdot,\cdot)$ is defined as
\begin{equation}
b_{{\varepsilon}}(u,v)=\int_{\Omega} \bigg( \nabla u \cdot \nabla \overline{v}-\kappa^{2}{\varepsilon} u\overline{v}\bigg) -\int_{\Gamma_{H}}\overline{v}T^{+}(u)-\int_{\Gamma_{-H}}\overline{v}T^{-}(u).
\label{eqn:sequi}
\end{equation}

Problem \eqref{eqn:problem1} uses the true relative permittivity. However we are also concerned with a second variational problem wherein ${\varepsilon}$ is replaced by an approximation ${\varepsilon}_h$. To define this approximation, the domain $\Omega$ is discretized into $S\geq 1$ slices in the $x_{2}$-direction. The slices are given by
\[S_{j}=\{\bm{x}\in \mathbb{R}^{2}, \ h_{j-1}\leq x_{2} <h_{j} \}, \]
such that $\Omega=\cup_{j=1}^{S}S_{j}.$ The thickness of each slice is $\Delta h_{j}=h_{j}-h_{j-1}$, and so we define $h=\max_{j}\Delta h_{j}$. We also require that anywhere there is $\frac{d}{dx_{1}}g_{k}=0$,
the slices are chosen so that this occurs at an inter-slice boundary. 

We assume there is a constant $C_{\Delta}>0$ such that 
\[\frac{h}{\min_{j}\Delta h_{j}}\leq C_{\Delta}.\]
 In any slice where ${\varepsilon}$ is piecewise constant and the interfaces are already a stairstep, no approximation is made. Otherwise, the true grating interface is sampled along the center line of the slice, where $x_{2}=h_{j-\frac{1}{2}}$ for $h_{j-\frac{1}{2}}=(h_{j}+h_{j-1})/2$. In each slice $S_{j}$, the true ${\varepsilon}$ is approximated as
\begin{equation}\label{eq:approx}
    {\varepsilon}_{h}(x_{1},x_{2})={\varepsilon}(x_{1},h_{j-\frac{1}{2}}).
\end{equation}
In this way, the stairstep approximation ${\varepsilon}_{h}$ is defined on $\Omega$. The key point is that ${\varepsilon}_{h}$ is independent of $x_{2}$ on each slice. A visualization of the stairstep approximation of a grating interface is given in Figure \ref{fig:approx}. 

We now define a perturbed problem with ${\varepsilon}$ replaced with ${\varepsilon}_{h}$.
We seek $u^{h}\in V$ such that 
\begin{equation}
 b_{{\varepsilon}_{h}}(u^{h},v)=-\int_{\Omega}f \overline{v}
 \label{eqn:problem2}
\end{equation}
for all $v\in V$. Here $b_{{\varepsilon}_{h}}$ is defined the same as in \eqref{eqn:sequi}, but with ${\varepsilon}$ replaced with ${\varepsilon}_{h}$. Both problems \eqref{eqn:problem1} and \eqref{eqn:problem2} have unique solutions except possibly at a discrete set of wavenumbers \cite{A-Bao}. The proof relies on compactness arguments, so the dependence of the continuity constants on ${\varepsilon}$ and ${\varepsilon}_{h}$ is unclear. In Section \ref{priori}, we derive an 
\textit{a-priori} estimate under restricted conditions, where the dependences on ${\varepsilon}$ and ${\varepsilon}_{h}$  are explicit. 

\begin{figure}[h] 
	\centering
	\includegraphics[width=4in]{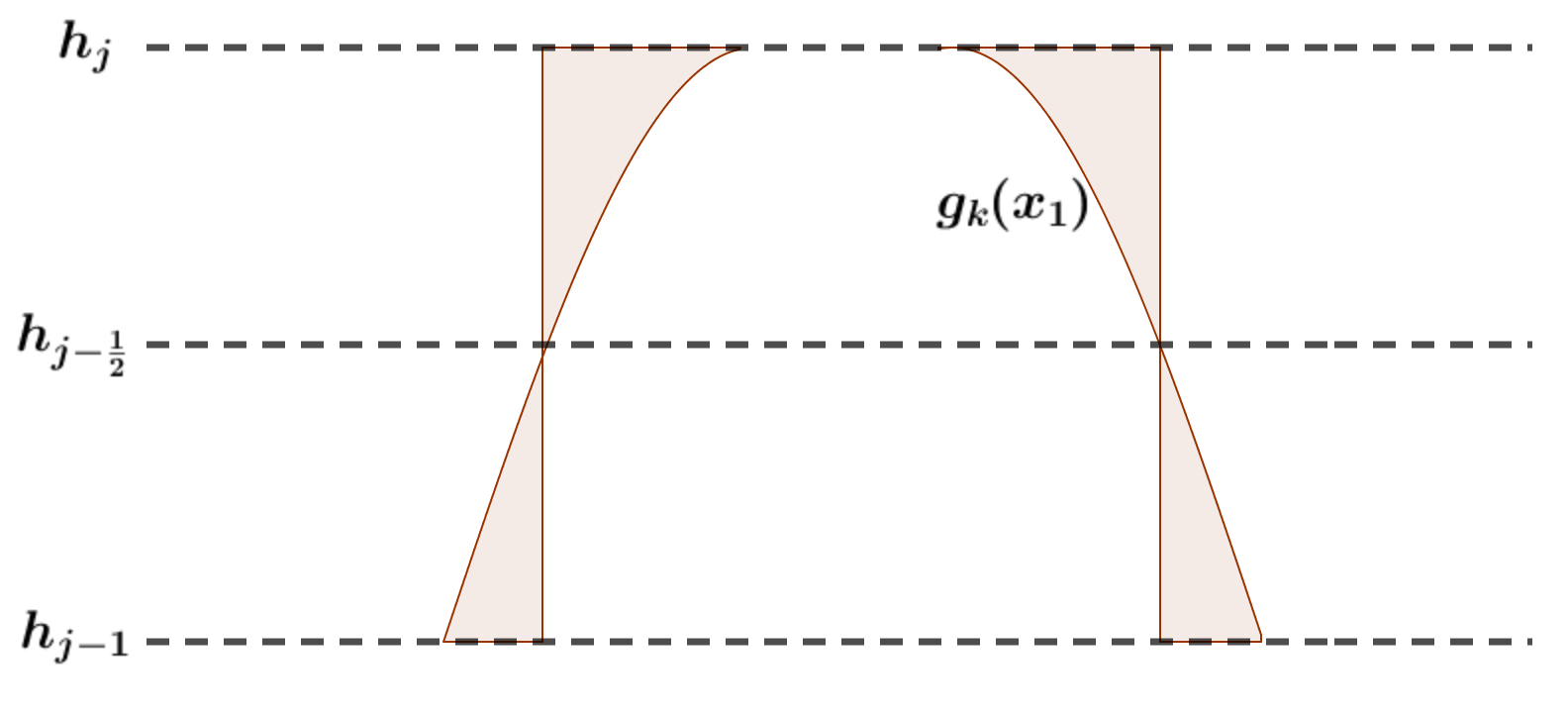}
	\caption{One slice in a grating region with interface $g_{k}(x_{1})$, showing how the stairstep approximation is made there. The shaded region is where the true relative permittivity and the approximated relative permittivity differ in the slice $S_{j}$, e.g. the $\text{supp}_{S_{j}} |{\varepsilon}-{\varepsilon}_{h}|$. Here, ${\varepsilon}$ is assumed to be piecewise constant.}
	\label{fig:approx}
\end{figure}

\section{A Rellich Identity} \label{Rellich}
The main tool to prove convergence of the RCWA   for the $s$-polarization
state is a Rellich identity for the scattering problems \eqref{eqn:problem1} and \eqref{eqn:problem2}. Later in this paper, we  use this identity to show convergence in the number of retained Fourier modes and also the slice thickness. 

We now show that a Rellich identity for an unbounded layered-media problem \cite{Lechleiter} also holds in our quasi-periodic case. Following Lechleiter and Ritterbusch \cite{Lechleiter}, we have the following lemma.

\begin{lemma}
Assume that ${\varepsilon} \in C^{(1,1)}(\overline{\Omega_{k}})$ for all $k=1,2,\cdots,I+1$ is real in $\Omega$ and $\Re({\varepsilon})> 0$. If $u$ is a solution to the variational problem \eqref{eqn:problem1} for $f\in L^{2}(\Omega)$, then the Rellich identity holds:
\begin{align*}
&\int_{\Omega}\bigg[ 2\bigg|\frac{\partial u}{\partial x_{2}}\bigg|^{2}+\kappa^{2}(x_{2}+H)\frac{\partial {\varepsilon}}{\partial x_{2}}|u|^{2}\bigg]-\sum_{k=1}^{I}\kappa^{2}\int_{\Gamma_{k}}(x_{2}+h)\jmp{{\varepsilon}}_{\Gamma_{k}}|u|^{2}\nu_{2}\\
&+2H\int_{\Gamma_{H}}\bigg(|\nabla u|^{2}-2\bigg|\frac{\partial u}{\partial x_{2}}\bigg|^{2}-\kappa^{2}{\varepsilon}|u|^{2}\bigg)
-\int_{\Gamma_{H}}\overline{u}T^{+}(u)-\int_{\Gamma_{-H}}\overline{u}T^{-}(u)\\
&=-2\int_{\Omega}(x_{2}+H)\Re(\overline{f}\frac{\partial u}{\partial x_{2}})-\int_{\Omega}f\overline{u}.
\end{align*}
\end{lemma}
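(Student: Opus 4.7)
The plan is to test the strong form of the Helmholtz equation with the classical Rellich multiplier $2M\,\overline{\partial u/\partial x_2}$, where $M(x):=x_2+H$, take real parts, and integrate over each smooth subdomain $\Omega_k$ before summing on $k$. Inside each $\Omega_k$, the coefficient ${\varepsilon}$ is $C^{(1,1)}$ and $f\in L^{2}$, so interior elliptic regularity gives $u\in H^{2}_{\text{loc}}$ away from the finitely many corner points of the stairstep interfaces. Two pointwise identities then drive the calculation: $2\Re(\overline{\partial_{2}u}\,\Delta u)=\nabla\cdot[2\Re(\overline{\partial_{2}u}\,\nabla u)]-\partial_{2}|\nabla u|^{2}$ for the diffusion piece, and $2\Re(\overline{\partial_{2}u}\,\kappa^{2}{\varepsilon}u)=\kappa^{2}{\varepsilon}\,\partial_{2}|u|^{2}$ for the reaction piece, the latter relying on ${\varepsilon}$ being real in $\Omega$.

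Multiplying the first identity by $M$, integrating over $\Omega_k$, and integrating the $M\,\partial_{2}|\nabla u|^{2}$ contribution once more in $x_2$, the bulk simplifies to $\int_{\Omega_k}(|\partial_{1}u|^{2}-|\partial_{2}u|^{2})$ plus the flux $\int_{\partial\Omega_k}M\bigl[2\Re(\overline{\partial_{2}u}\,\nabla u\cdot\nu_{\Omega_k})-|\nabla u|^{2}(\nu_{\Omega_k})_{2}\bigr]$. Treating the reaction identity analogously yields $-\kappa^{2}\int_{\Omega_k}({\varepsilon}+M\,\partial_{2}{\varepsilon})|u|^{2}$ together with the flux $\kappa^{2}\int_{\partial\Omega_k}M{\varepsilon}|u|^{2}(\nu_{\Omega_k})_{2}$. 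The unwanted interior term $-\kappa^{2}\int_{\Omega}{\varepsilon}|u|^{2}$ is removed by testing the variational equation \eqref{eqn:problem1} with $v=u$, which gives $\kappa^{2}\int_{\Omega}{\varepsilon}|u|^{2}=\int_{\Omega}|\nabla u|^{2}-\int_{\Gamma_{H}}\overline{u}T^{+}(u)-\int_{\Gamma_{-H}}\overline{u}T^{-}(u)+\int_{\Omega}f\overline{u}$. Combining with the $(|\partial_{1}u|^{2}-|\partial_{2}u|^{2})$ piece using $|\partial_{1}u|^{2}-|\partial_{2}u|^{2}-|\nabla u|^{2}=-2|\partial_{2}u|^{2}$, the bulk collapses after a sign flip to exactly $\int_{\Omega}\bigl[2|\partial_{2}u|^{2}+\kappa^{2}M(\partial_{2}{\varepsilon})|u|^{2}\bigr]$.

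It then remains to collect the boundary fluxes. The left and right contributions cancel by quasi-periodicity, exactly as in the derivation of the variational formulation. On each interior interface $\Gamma_k$, the outward normals from $\Omega_k$ and $\Omega_{k+1}$ are opposite, and since both $u$ and $\nabla u\cdot\nu$ (hence $\nabla u$) are continuous across $\Gamma_k$, every piece of the combined flux telescopes except the one carrying ${\varepsilon}$, producing precisely $-\kappa^{2}\int_{\Gamma_k}M\jmp{{\varepsilon}}_{\Gamma_k}|u|^{2}\nu_{2}$. On $\Gamma_{-H}$, the prefactor $M=0$ annihilates everything; on $\Gamma_{H}$ we have $M=2H$ with outward normal $\bm{e}_{2}$, so the remaining flux reduces to $2H\int_{\Gamma_{H}}(2|\partial_{2}u|^{2}-|\nabla u|^{2}+\kappa^{2}{\varepsilon}|u|^{2})$, which after transferring to the left-hand side is exactly the bracketed $\Gamma_{H}$ expression in the statement. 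Rewriting the normal derivatives on $\Gamma_{\pm H}$ in terms of $T^{\pm}(u)$ furnishes the two Dirichlet-to-Neumann integrals and completes the identity.

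The main technical obstacle, in my view, is the regularity of $u$ at the corner points where some $\Gamma_k$ transitions between smooth pieces and a vertical segment $W_{lk}$; there $u$ is in general only $H^{1}$, and the pointwise manipulations above are not immediately justified. I would handle this by excising small discs of radius $r$ around the finitely many such corners, performing the entire calculation on the truncated region where classical integration by parts is available, and then letting $r\to 0$. Standard corner-singularity estimates for scalar transmission problems with Lipschitz interfaces show that the contributions on the small circles are controlled by a positive power of $r$ and vanish in the limit, so the identity extends to all of $\Omega$. The remaining manipulations are routine integration by parts and bookkeeping.
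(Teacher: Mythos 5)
Your proposal is correct and follows essentially the same route as the paper: the Rellich multiplier $(x_{2}+H)\,\partial u/\partial x_{2}$, integration by parts with quasi-periodic cancellation of the lateral boundaries, the interface jump terms coming from $\varepsilon$, and elimination of the $\kappa^{2}\int_{\Omega}\varepsilon|u|^{2}$ bulk term via the variational identity with $v=u$. The only real difference is technical: the paper simply invokes global elliptic regularity $u\in H^{2}(\Omega)$ (the discontinuities of $\varepsilon$ sit only in the zeroth-order coefficient, so $\Delta u=f-\kappa^{2}\varepsilon u\in L^{2}(\Omega)$ and there are no corner singularities to worry about), which makes your subdomain-by-subdomain telescoping and the excision of discs around the stairstep corners unnecessary, though not incorrect.
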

\begin{remark}
Here, $\nu_{2}$ is the second component of the normal vector $\bm{\nu}$. For a stairstepped interface, the vertical sections do not appear in the sum in the first line of the Rellich identity, since $\nu_{2}=0$ there. Since the horizontal sections of a stairstep interface constitute a piecewise Lipschitz-continuous function at all but a finite number of $x_{1}$, we can control the $L^{2}$ norm of $u$.
\end{remark}

\begin{proof}
As in Ref.~\cite{Lechleiter} Lemma 3.1 (a), elliptic regularity implies that a solution $u\in H^{1}(\Omega)$ of (2)--(4) also belongs to $H^{2}(\Omega)$. Our proof follows \cite{Lechleiter}, where we check that the same Rellich identity holds for quasi-periodic solutions.
Choosing the test function $v=(x_{2}+H)\frac{\partial u}{\partial x_{2}}$, we have 
\begin{align*}
    \int_{\Omega}(x_{2}+H)\frac{\partial u}{\partial x_{2}}\Delta \overline{u}&=-\int_{\Omega}\nabla
    \bigg[(x_{2}+H)\frac{\partial u}{\partial x_{2}} \bigg] \cdot \nabla \overline{u}+\int_{\partial \Omega}(x_{2}+H)\frac{\partial u}{\partial x_{2}}\frac{\partial \overline{u}}{\partial \bm{\nu}}\nonumber\\
    &=-\int_{\Omega}\bigg|\frac{\partial u}{\partial x_{2}} \bigg|^{2}+(x_{2}+H)\nabla\bigg(\frac{\partial u}{\partial x_{2}} \bigg)\cdot \nabla \overline{u}+2H\int_{\Gamma_{H}}\bigg|\frac{\partial u}{\partial x_{2}} \bigg|^{2}. 
    \end{align*}
Here, we used Green's First Identity in the first step, and quasi-periodicity to cancel the left and right boundary integrals, since
\[ \frac{\partial u_{R}}{\partial x_{2}}\nabla\overline{u}_{R}\cdot \nu_{R}=-\frac{\partial u_{L}}{\partial x_{2}}\nabla \overline{u}_{L}\cdot \nu_{L}.\]
By taking twice the real part of both sides, and using the identity
\[\frac{\partial}{\partial x_{2}}|\nabla u|^{2}=2\Re\bigg[\nabla u \cdot \nabla \bigg(\frac{\partial \overline{u}}{\partial x_{2}} \bigg) \bigg],\]
we obtain that
\begin{align}
\label{eqn:Rellich1}
    2\Re\int_{\Omega}(x_{2}+H)\frac{\partial u}{\partial x_{2}}\Delta \overline{u}&=
    -\int_{\Omega}\bigg[2\bigg|\frac{\partial u}{\partial x_{2}} \bigg|^{2}+(x_{2}+H)\frac{\partial}{\partial x_{2}}|\nabla u|^{2}\bigg]+2H\int_{\Gamma_{H}}2\bigg|\frac{\partial u}{\partial x_{2}} \bigg|^{2}.\nonumber\\ 
    &=\int_{\Omega}\bigg(|\nabla u|^{2}-2\bigg|\frac{\partial u}{\partial x_{2}} \bigg|^{2}\bigg)+2H\int_{\Gamma_{H}}\bigg(-|\nabla u|^{2}+2\bigg|\frac{\partial u}{\partial x_{2}} \bigg|^{2}\bigg),
\end{align}
where we used the Divergence Theorem in the second step, that $x_{2}=-H$ on $\Gamma_{-H}$,
and $\nu_{2}=0$ on $\Gamma_{L}$ and $\Gamma_{R}$.

On the other hand, we have from (1) that $\Delta \overline{u}=\overline{f}-\kappa^{2}{\varepsilon} \overline{u}$ for ${\varepsilon}$ real in $\Omega$. Then,
\begin{align}    
\label{eqn:Rellich2}
    &2\Re \int_{\Omega}(x_{2}+H)\frac{\partial u}{\partial x_{2}}\Delta \overline{u}\\ \nonumber
    &=2\int_{\Omega}(x_{2}+H)\Re \bigg(\frac{\partial u}{\partial x_{2}}\overline{f} \bigg)-\kappa^{2}\int_{\Omega}(x_{2}+H){\varepsilon} 2 \Re \bigg(\frac{\partial u}{\partial x_{2}}\overline{u} \bigg)\\ \nonumber
    &=2\int_{\Omega}(x_{2}+H)\Re \bigg(\frac{\partial u}{\partial x_{2}}\overline{f} \bigg)-2H\int_{\Gamma_{H}}\kappa^{2}{\varepsilon}|u|^{2}+\kappa^{2}\int_{\Omega}\frac{\partial}{\partial x_{2}}\bigg[(x_{2}+H){\varepsilon} \bigg]|u|^{2}\\ \nonumber
    &-\sum_{k=1}^{I}\kappa^{2}\int_{\Gamma_{k}}(x_{2}+H)\jmp{{\varepsilon}}_{\Gamma_{k}}|u|^{2}\nu_{2}. \nonumber
\end{align}
This follows from integrating by parts in the second step, from the identity
\[2\Re \bigg(\frac{\partial u}{\partial x_{2}}\overline{u} \bigg)=\frac{\partial}{\partial x_{2}}|u|^{2}, \]
and by the quasi-periodicity of $u$. The Rellich identity follows from \eqref{eqn:Rellich1} and \eqref{eqn:Rellich2}.
\qed
\end{proof}

\section{An \textit{a-priori} Estimate} \label{priori}
Using the Rellich identity along the lines of \cite{Lechleiter}, we can prove an 
\textit{a-priori} estimate for the solution with a continuity constant with explicit dependence on ${\varepsilon}$ and $h$. This can be used to prove existence for all $\kappa$ under the assumptions on ${\varepsilon}$ given in the statement of the theorem in this section. The \textit{a-priori} estimate holds for all such ${\varepsilon}$ as described in Section \ref{Var}, and so it holds for the stairstep approximation ${\varepsilon}_{h}$. We rely on the non-trapping conditions to ensure that the continuity constant is bounded independent of $h$. First, we prove a lemma.

\begin{lemma} \label{ineq}
For all solutions $u\in H^{1}(\Omega)$ to the variational problem \eqref{eqn:problem1}, there is a constant $C>0$ such that
\begin{equation*}
    \normLs{u}{\Omega}\leq C\bigg(2\normLs{\frac{\partial u}{\partial x_{2}}}{\Omega}-\kappa^{2}\sum_{k=1}^{I}\int_{\Gamma_{k}}(x_{2}+H)\jmp{{\varepsilon}}_{\Gamma_{k}}|u|^{2}\nu_{2} \bigg),
\end{equation*}
where the constant 
\begin{equation} \label{constant}
C=2H\bigg(H+\frac{2}{\kappa^{2}\min_{\hat{\Gamma}_{k}}|\nu_{2}|\min_{k}\inf_{\Gamma_{k}}\big((x_{2}+H)\jmp{{\varepsilon}}_{\Gamma_{k}}\big)} \bigg). 
\end{equation}
\end{lemma}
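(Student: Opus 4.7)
I would prove this by a weighted Poincar\'e-type argument. The plan is to control $\|u\|_{L^2(\Omega)}^2$ pointwise in terms of the trace of $u$ on one of the interfaces $\Gamma_k$, and then use the interface term already appearing on the right-hand side of the lemma to absorb this trace. No use is made of the PDE; only $u\in H^1(\Omega)$ and the implicit positivity hypothesis $(x_2+H)\jmp{\varepsilon}_{\Gamma_k}>0$ on each interface (which is what makes the denominator in $C$ sensible) are required.

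First I would establish a pointwise bound. For fixed $x_1$ and any reference height $c\in[-H,H]$, the fundamental theorem of calculus gives $u(x_1,x_2)-u(x_1,c)=\int_c^{x_2}\partial_s u\,ds$, so by Cauchy--Schwarz and $(a+b)^2\leq 2a^2+2b^2$,
\[ |u(x_1,x_2)|^2 \leq 2|u(x_1,c)|^2 + 2|x_2-c|\int_{\min(c,x_2)}^{\max(c,x_2)}\left|\tfrac{\partial u}{\partial s}(x_1,s)\right|^2 ds. \]
Integrating in $x_2\in(-H,H)$, swapping the order of integration in the resulting double integral, and using the elementary bound $\int_s^H(x_2-c)\,dx_2 \leq 2H^2$ (and similarly for the $s<c$ regime), I obtain
\[ \int_{-H}^H |u(x_1,x_2)|^2\,dx_2 \leq 4H|u(x_1,c)|^2 + 4H^2 \int_{-H}^H\left|\tfrac{\partial u}{\partial s}(x_1,s)\right|^2 ds. \]
Setting $c(x_1)=g_k(x_1)$ for a fixed index $k$ (well-defined except at the finitely many jump points in the stairstep case), integrating in $x_1$, and using $\int_0^{L_x}|u(x_1,g_k)|^2\,dx_1 = \int_{\hat{\Gamma}_k}|u|^2|\nu_2|\,dS \leq \int_{\hat{\Gamma}_k}|u|^2\,dS$ (since $|\nu_2|\leq 1$), yields
\[ \|u\|_{L^2(\Omega)}^2 \leq 4H\int_{\hat{\Gamma}_k}|u|^2\,dS + 4H^2\left\|\tfrac{\partial u}{\partial x_2}\right\|_{L^2(\Omega)}^2. \]

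Next I would bound the interface integral from below. On the vertical segments $W_{lk}$ of the stairstep, $\nu_2=0$, so they contribute nothing; on the smooth part $\hat{\Gamma}_k$ the downward normal has $\nu_2<0$, whence
\[ -\int_{\Gamma_k}(x_2+H)\jmp{\varepsilon}_{\Gamma_k}|u|^2 \nu_2\,dS = \int_{\hat{\Gamma}_k}(x_2+H)\jmp{\varepsilon}_{\Gamma_k}|u|^2|\nu_2|\,dS \geq \min_{\hat{\Gamma}_k}|\nu_2|\cdot \inf_{\Gamma_k}\bigl((x_2+H)\jmp{\varepsilon}_{\Gamma_k}\bigr)\int_{\hat{\Gamma}_k}|u|^2\,dS. \]
Combining this with the previous display, bounding a single summand by the full (nonnegative) sum over $k$, and taking $\min_k$ in the denominator, produces the asserted inequality with the explicit constant $C$ from \eqref{constant}.

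The main technical care needed concerns the stairstep structure. Since each $g_k$ has only finitely many jump discontinuities, the set of multi-valued $x_1$ has zero measure and does not affect the trace identities used; the vertical segments $W_{lk}$ drop out of the interface integral because $\nu_2=0$; and the minimum $\min_{\hat{\Gamma}_k}|\nu_2|$ is taken only over the smooth portion, where it is strictly positive by the assumed $C^{(1,1)}$ regularity of the $g_k$ away from the finite jump set. The sign hypothesis $(x_2+H)\jmp{\varepsilon}_{\Gamma_k}>0$ is essential: it makes the denominator in $C$ positive and also ensures that a single interface summand can be controlled by the entire sum, which is the step that generalises the single-interface pointwise anchor to the full collection of interfaces.
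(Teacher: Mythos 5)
Your argument is correct and it delivers exactly the constant in \eqref{constant}; at bottom it is the same mechanism as the paper's proof, namely a vertical trace--Poincar\'e inequality anchored on a grating interface, converted into the weighted jump integral via $\min_{\hat{\Gamma}_{k}}|\nu_{2}|$ and $\min_{k}\inf_{\Gamma_{k}}\big((x_{2}+H)\jmp{{\varepsilon}}_{\Gamma_{k}}\big)$, but you organize it differently. The paper partitions $\Omega$ into the strips $V_{lk}$, anchors each strip to the interface piece bounding it, and cites Lemma 4.3 of Lechleiter--Ritterbusch for the one-dimensional estimate $\normLs{u}{V_{lk}}\leq 4H\normLs{u}{\hat{\Gamma}_{lk}}+4H^{2}\normLs{\frac{\partial u}{\partial x_{2}}}{V_{lk}}$, so the sum over all interfaces appears automatically; you instead prove that one-dimensional estimate from scratch (with the same constants $4H$ and $4H^{2}$) and anchor the whole rectangle to a single fixed interface, then enlarge that one interface term to the full sum using the nonnegativity of each summand. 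What your route buys is self-containedness (no external lemma, no subdomain bookkeeping); what it costs is that two hypotheses must be made explicit: (i) anchoring every vertical line at $x_{2}=g_{k}(x_{1})$ is legitimate only because $u$ itself does not jump across the interfaces --- it lies globally in $H^{1}(\Omega)$ (indeed $H^{2}$ by elliptic regularity), only ${\varepsilon}$ jumps --- otherwise one would be forced into the paper's strip decomposition; and (ii) both the step discarding the other summands and the positivity of the denominator in $C$ are precisely the non-trapping sign condition $\jmp{{\varepsilon}}_{\Gamma_{k}}>0$ of \eqref{eqn:trap}, together with $x_{2}+H>0$ on the interfaces and $\nu_{2}\leq 0$ for the downward normal, which the paper also uses implicitly here. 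One minor correction: the lower bound $\min_{\hat{\Gamma}_{k}}|\nu_{2}|>0$ comes from the Lipschitz bound on the graph pieces $\phi_{lk}$ (bounded slope), not from $C^{(1,1)}$ regularity as your closing remark suggests.
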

\begin{proof}
By the definition of the $g_{k}$, we can define the subsets of $\Omega$ by
\[V_{lk}=\{\bm{x}\in\Omega, x_{lk}\leq x_{1} \leq x_{(l+1)k}, \min_{x_{lk}\leq x_{1} \leq x_{(l+1)k}} g_{k}-\delta \leq x_{2} \leq \min_{x_{lk}\leq x_{1} \leq x_{(l+1)k}} g_{k+1}-\delta \}, \]
for all $k=2,\cdots,I-1$ and all $l$. The upper bound on $x_{2}$ should be replaced with $H$ when $k=I$, and similarly the lower bound on $x_{2}$ should be $-H$ when $k=1$. By construction, we have
\[\Omega=\bigcup_{lk}V_{lk}. \]
Since each $g_{k}$ is Lipschitz-continuous in $V_{kl}$, we apply \cite{Lechleiter} Lemma 4.3 to each $V_{lk}$, so that
\[\normLs{u}{V_{lk}}\leq 4H \normLs{u}{\hat{\Gamma}_{lk}}+4H^{2}\normLs{\frac{\partial u}{\partial x_{2}}}{V_{lk}}. \]
Now we sum over all $k$ and $j$, and use that $\nu_{2}\neq 0$ on any $\hat{\Gamma}_{k}$,
\begin{align*}\normLs{u}{\Omega}&\leq \frac{4H}{\min_{\hat{\Gamma}_{k}}|\nu_{2}|}\sum_{k=1}^{I}\bigg(\normLs{|\nu_{2}|^{1/2}u}{\hat{\Gamma}_{k}}\bigg)+4H^{2}\normLs{\frac{\partial u}{\partial x_{2}}}{\Omega}\\
&\leq \frac{4H}{\kappa^{2}\min_{\hat{\Gamma}_{k}}|\nu_{2}|\min_{k}\inf_{\Gamma_{k}}\big((x_{2}+H)\jmp{{\varepsilon}}_{\Gamma_{k}} \big)}\kappa^{2}\sum_{k=1}^{I}\int_{\Gamma_{k}}(x_{2}+H)\jmp{{\varepsilon}}_{\Gamma_{k}}|u|^{2}|\nu_{2}|\\
&+4H^{2}\normLs{\frac{\partial u}{\partial x_{2}}}{\Omega},
\end{align*}
where in the last line we used that $\nu_{2}=0$ on the vertical sections of the $\Gamma_{k}$. To complete the proof, by construction we have $-\nu_{2}=|\nu_{2}|$ on $\Gamma_{k}$.
\qed
\end{proof}
Under the assumption $\Re({\varepsilon})>0$ and $\Im({\varepsilon})=0$, we prove the following theorem.
\begin{theorem} \label{apriori}
Assume that ${\varepsilon} \in C^{(1,1)}(\overline{\Omega_{k}})$ for all $k=1,2,\cdots,I+1$, and the   non-trapping conditions 
\begin{equation}\frac{\partial {\varepsilon}}{\partial x_{2}}\geq 0 \ \text{in} \ \Omega_{k}, \ \ \ \jmp{{\varepsilon}}_{\Gamma_{k}}>0, \ \text{and} \  \Re({\varepsilon}^{+}-{\varepsilon})\geq0 \ \text{on} \ \Gamma_{H}, \label{eqn:trap} \end{equation}
hold for all $k=1,2,\cdots,I+1$. Further, assume that $\Re({\varepsilon}^{\pm})>0$ and $\Im({\varepsilon}^{\pm})\geq0$ and ${\varepsilon}$ is real in $\Omega$. Then for $f\in L^{2}(\Omega)$ there exists a unique solution $u\in H^{1}(\Omega)$ of the variational problem \eqref{eqn:problem1}. Also there is an explicit constant \[C(\kappa,{\varepsilon})=C(1+\kappa^{2})\normLinf{{\varepsilon}}{\Omega}(2\rho \kappa H+4H+1)+1\] with 
$\rho=2[\Re({\varepsilon}^{+})]^{1/2}+\sqrt{2}[\Im({\varepsilon}^{+})]^{1/2}$ and $C$ defined as in \eqref{constant}, such that
\[\normH{u}{\Omega}\leq C(\kappa,{\varepsilon})\normL{f}{\Omega}.  \]
\end{theorem}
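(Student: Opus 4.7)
The plan is to follow Lechleiter and Ritterbusch's strategy, combining the Rellich identity of Lemma 1 with the standard variational identity obtained by testing with $v=u$, and then using Lemma \ref{ineq} to recover an $L^2$ bound on $u$. The non-trapping hypotheses \eqref{eqn:trap} are tailored exactly so that the volume and interface terms involving $\partial\varepsilon/\partial x_2$ and $\jmp{\varepsilon}_{\Gamma_k}$ in the Rellich identity carry the correct signs and can be moved to, or absorbed on, the left-hand side.

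First I would assume a solution $u\in H^1(\Omega)$ exists (existence is recovered at the end by Fredholm theory) and test \eqref{eqn:problem1} with $v=u$, giving
\[\int_\Omega |\nabla u|^2 - \kappa^2\int_\Omega \varepsilon |u|^2 - \int_{\Gamma_H}\bar u T^+(u) - \int_{\Gamma_{-H}}\bar u T^-(u) = -\int_\Omega f \bar u.\]
Using the Fourier expansion of $u$ on $\Gamma_H$, the DtN boundary contribution splits naturally into propagating and evanescent parts: writing $-\int_{\Gamma_H}\bar u T^+(u) = -iL_x\sum_n \beta_n |u_n(H)|^2$, its real part contributes nonnegatively (from evanescent modes, since $\beta_n\in i\mathbb R_+$) and its imaginary part is nonnegative (from propagating modes, since $\beta_n\in\mathbb R_+$); the analogous statements hold on $\Gamma_{-H}$. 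The propagating-mode piece will later be bounded in modulus by the constant $\rho$ together with a factor of $\kappa$ coming from $|\beta_n|\leq \kappa|\varepsilon^+|^{1/2}$, and a standard computation of $\sqrt{\Re\varepsilon^+ + i\Im\varepsilon^+}$ yields the explicit form $\rho=2[\Re(\varepsilon^+)]^{1/2}+\sqrt{2}[\Im(\varepsilon^+)]^{1/2}$.

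Next, I would apply Lemma 1 with $v=(x_2+H)\partial u/\partial x_2$ and take the real part. Under \eqref{eqn:trap}, the term $\kappa^2(x_2+H)\partial\varepsilon/\partial x_2 |u|^2$ is nonnegative and can be dropped, while $-\kappa^2\sum_k\int_{\Gamma_k}(x_2+H)\jmp{\varepsilon}_{\Gamma_k}|u|^2\nu_2$ is also nonnegative since $\nu_2<0$ on $\hat\Gamma_k$; moreover $\Re(\varepsilon^+-\varepsilon)\geq 0$ on $\Gamma_H$ lets me replace $\int_{\Gamma_H}\kappa^2\varepsilon|u|^2$ by $\int_{\Gamma_H}\kappa^2\varepsilon^+|u|^2$ at the cost of a nonnegative discard. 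The resulting identity gives control of $2\normLs{\partial u/\partial x_2}{\Omega}$ plus the jump integrals in terms of $\int_\Omega (x_2+H)\Re(\bar f\,\partial u/\partial x_2)$, $\int_\Omega f\bar u$, and the boundary terms on $\Gamma_H$, $\Gamma_{-H}$. A Young's inequality on $\int_\Omega (x_2+H)\Re(\bar f\,\partial u/\partial x_2)$, using the factor $H$, absorbs a small fraction of $\normLs{\partial u/\partial x_2}{\Omega}$ into the left and leaves a term of order $H^2\normLs{f}{\Omega}$. Feeding the resulting bound on $2\normLs{\partial u/\partial x_2}{\Omega}$ and on the weighted jump integrals into Lemma \ref{ineq} yields $\normL{u}{\Omega}\leq C(\kappa,\varepsilon)\normL{f}{\Omega}$ with an explicit constant matching the stated form.

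To upgrade the $L^2$ bound to $H^1$, I return to the tested variational identity and take real parts to bound $\normLs{\nabla u}{\Omega}$ by $\kappa^2\normLinf{\varepsilon}{\Omega}\normLs{u}{\Omega}$, plus the already-controlled DtN boundary pieces (bounded by $\rho\kappa\normLs{u}{\Gamma_H}$ and the analogue on $\Gamma_{-H}$, with the trace bounded against $\normL{u}{\Omega}\normH{u}{\Omega}$ and absorbed), plus $\normL{f}{\Omega}\normL{u}{\Omega}$; substituting the $L^2$ bound yields the claimed $H^1$ estimate and explains the factor $(1+\kappa^2)\normLinf{\varepsilon}{\Omega}(2\rho\kappa H+4H+1)$. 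Finally, uniqueness is immediate from the estimate applied to $f\equiv 0$, and existence follows from the Fredholm alternative cited after \eqref{eqn:problem2} (the problem is Fredholm of index zero), so uniqueness implies solvability for every $\kappa$.

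\emph{Expected difficulty.} The bookkeeping is routine; the delicate step is tracking signs and extracting the explicit constant. In particular, handling the DtN contributions in both identities simultaneously — showing that the propagating-mode imaginary part from one identity does not spoil the real-part bound from the other, and that the $\Re(\varepsilon^+-\varepsilon)\geq 0$ hypothesis on $\Gamma_H$ is precisely what is needed to discard the awkward $\int_{\Gamma_H}\kappa^2\varepsilon|u|^2$ term in the Rellich identity — is the main obstacle and the place where the non-trapping assumptions must be used sharply.
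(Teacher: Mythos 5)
Your proposal is correct and follows essentially the same route as the paper: the Rellich identity combined with Lemma \ref{ineq}, the sign analysis of the DtN terms via the Rayleigh expansion, the imaginary part of the $v=u$ identity to control the propagating modes, the non-trapping conditions (including $\Re(\varepsilon^{+}-\varepsilon)\geq 0$ on $\Gamma_H$) to discard the awkward terms, and the real part of the $v=u$ identity to upgrade from $L^2$ to $H^1$, exactly as in Lechleiter--Ritterbusch. The only deviations are minor bookkeeping: the paper does not absorb the $\int_\Omega(x_2+H)\Re(\bar f\,\partial_2 u)$ term by Young's inequality but keeps $\normL{f}{\Omega}\normH{u}{\Omega}$ and closes the loop at the end (which is how the exact stated constant emerges), and it invokes the inf-sup condition implied by the estimate rather than the Fredholm alternative for existence; neither difference affects the substance.
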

\begin{proof} The proof follows the same procedure as in \cite{Lechleiter}, but we use different Dirichlet-to-Neumann operators.
From the definition of the Dirichlet-to-Neumann operators and by Parseval's Theorem,
\[\int_{\Gamma_{H}}\overline{u}T^{+}(u)=i\sum_{n\in \mathbb{Z}}\beta_{n}|u_{n}(H)|^{2}. \]
Now we see that the signs of the real and imaginary parts of this integral are known, because 
\begin{align*}
\Re\int_{\Gamma_{H}}\overline{u}T^{+}(u)&=-\sum_{\alpha_{n}^{2}>\kappa^{2}{\varepsilon}^{+}}\sqrt{\alpha_{n}^{2}-\kappa^{2}{\varepsilon}^{+}}|u_{n}(H)|^{2},\\
\Im\int_{\Gamma_{H}}\overline{u}T^{+}(u)&=\sum_{\alpha_{n}^{2}<\kappa^{2}{\varepsilon}^{+}}\sqrt{\kappa^{2}{\varepsilon}^{+}-\alpha_{n}^{2}}|u_{n}(H)|^{2}.
\end{align*}
 For all $a\geq H$, we use the representation \eqref{eqn:angular} to compute the coefficients 
\begin{align} \label{eqn:coeff}
u_{n}(a)&=\exp\left[i(a-H)\sqrt{\kappa^{2}{\varepsilon}^{+}-\alpha_{n}^{2}}\right]
u_{n}(H),\\
(\partial_{2} u)_{n}(a)&=i\sqrt{\kappa^{2}{\varepsilon}^{+}-\alpha_{n}^{2}}\exp \left[i(a-H)\sqrt{\kappa^{2}{\varepsilon}^{+}-\alpha_{n}^{2}} \right]
u_{n}(H),\\ 
(\partial_{1} u)_{n}(a)&=i\alpha_{n}\exp \left[i(a-H)\sqrt{\kappa^{2}{\varepsilon}^{+}-\alpha_{n}^{2}}\right]u_{n}(H). \label{eqn:coeff3}
\end{align}
Furthermore, using \eqref{eqn:coeff}-\eqref{eqn:coeff3} we can bound the boundary integral on the second line of the Rellich identity,
\begin{align*} 
\int_{\Gamma_{H}}&\bigg(-|\nabla u|^{2}+2\bigg|\frac{\partial u}{\partial x_{2}}\bigg|^{2}+\kappa^{2}{\varepsilon}|u|^{2}\bigg)\\
&=\sum_{n\in \mathbb{Z}}\bigg(\big|\kappa^{2}{\varepsilon}^{+}-\alpha_{n}^{2}\big|-\alpha_{n}^{2}+\kappa^{2}{\varepsilon}^{+}\bigg)\big|
\exp \left[2i(a-H)\sqrt{\kappa^{2}{\varepsilon}^{+}-\alpha_{n}^{2}}\right]
\big||u_{}(H)|^{2}\\
&=2\sum_{\alpha_{n}^{2}<\kappa^{2}{\varepsilon}^{+}}\big(\kappa^{2}-\alpha_{n}^{2} \big)|u_{n}(H)|^{2}\\
&\leq 2k\sqrt{{\varepsilon}^{+}}\Im \int_{\Gamma_{H}}\overline{u}T^{+}(u).
\end{align*}
Now using the test function $v=u$ in the variational problem \eqref{eqn:problem1}, and taking the imaginary part, we have
\begin{align*}
    \Im\int_{\Gamma_{H}}\overline{u}T^{+}(u)&=\Im\int_{\Omega}f\overline{u}-\Im\int_{\Gamma_{-H}}\overline{u}T^{-}(u)\\
    &\leq \Im\int_{\Omega}f\overline{u}.
\end{align*}
From the non-trapping assumptions \eqref{eqn:trap} for ${\varepsilon}$ and using the estimates derived above, we get
\begin{align} \label{eqn:estimate}
&\int_{\Omega}2\bigg|\frac{\partial u}{\partial x_{2}}\bigg|^{2}-\sum_{k=1}^{I}\kappa^{2}\int_{\Gamma_{k}}(x_{2}+H)\jmp{{\varepsilon}}_{\Gamma_{k}}|u|^{2}\nu_{2}
\\\nonumber
&\leq 4kH\sqrt{{\varepsilon}^{+}}\Im \int_{\Gamma_{H}}f\overline{u}-2\int_{\Omega}(x_{2}+H)\Re(\overline{f}\partial_{2}u)-\Re \int_{\Omega}f\overline{u}.
\end{align}
Now we combine \eqref{eqn:estimate} and lemma \ref{ineq} to obtain
\begin{align*}
\normLs{u}{\Omega}&\leq C\bigg[
2\normLs{\frac{\partial u}{\partial x_{2}}}{\Omega}-\kappa^{2}\sum_{k}^{I}\int_{\Gamma_{k}}(x_{2}+H)\jmp{{\varepsilon}}_{\Gamma_{k}}\nu_{2} \bigg]\\
&\leq C \bigg[
4\kappa H\sqrt{{\varepsilon}^{+}}\Im \int_{\Gamma_{H}}f\overline{u}-2\int_{\Omega}(x_{2}+H)\Re(\overline{f}\partial_{2}u)-\Re \int_{\Omega}f\overline{u}\bigg]\\
&\leq C\big[(4 \kappa H\sqrt{{\varepsilon}^{+}}+4H+1)\normL{f}{\Omega}\normH{u}{\Omega}\big],
\end{align*}
We note that for ${\varepsilon}$ with $\Re({\varepsilon}^{\pm})> 0$ and $\Im({\varepsilon}^{\pm})\geq 0$ the term $4 \kappa \sqrt{{\varepsilon}^{+}}H$ in the above $L^{2}$ estimate can be replaced by $2\rho \kappa H$, as in \cite{Lechleiter} Lemma 4.2. Taking $u=v$ in the variational problem \eqref{eqn:problem1} and taking the real part, we have 
\[\normHs{u}{\Omega}\leq (1+\kappa^{2})\normLinf{{\varepsilon}}{\Omega}\normLs{u}{\Omega}+\normL{f}{\Omega}\normL{u}{\Omega}. \]
Consequently, for all $\kappa\geq \kappa_{0}>0$, we have a constant $C(\kappa_{0},{\varepsilon})>0$
such that $\normH{u}{\Omega}\leq C(\kappa_{0},{\varepsilon})(1+\kappa^{3})\normL{f}{\Omega}.$ Therefore we obtain existence, uniqueness and boundedness of the solution $u$ to \eqref{eqn:problem1}  and the solution $u^{h}$ to \eqref{eqn:problem2}. This follows because the 
\textit{a-priori} estimate implies an inf-sup condition for $b_{{\varepsilon}}(u,v)$ and $b_{{\varepsilon}_{h}}(u,v)$ \cite{Monk}. \qed
\end{proof}
\begin{corollary}
Assume that ${\varepsilon}$ satisfies the same assumptions as Theorem \ref{apriori}, but $\Re({\varepsilon})>0$ and $\Im({\varepsilon})>0$. Then there is a constant $C_{1}(\kappa,{\varepsilon})>0$ such that 
\[\normH{u}{\Omega}\leq C_{1}(\kappa,{\varepsilon})\normL{f}{\Omega}, \]
where the constant
\[C_{1}(\kappa,{\varepsilon})=2C(\kappa,{\varepsilon})(1+\kappa^{3})+2C(\kappa,{\varepsilon})^{2}(1+\kappa^{3})^{2}\kappa^{2}\normLinf{{\varepsilon}}{\Omega}.\]
\end{corollary}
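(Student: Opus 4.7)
The plan is to treat the imaginary part of $\varepsilon$ as a perturbation of the real-permittivity case already handled by Theorem \ref{apriori}. First, I split $\varepsilon=\Re(\varepsilon)+i\,\Im(\varepsilon)$ and rearrange the variational identity \eqref{eqn:problem1} to place the $i\kappa^{2}\Im(\varepsilon)u$ term on the right-hand side, yielding
\[ b_{\Re(\varepsilon)}(u,v)=-\int_{\Omega}f\,\overline{v}+i\kappa^{2}\int_{\Omega}\Im(\varepsilon)\,u\,\overline{v}\qquad\forall v\in V. \]
Since $\Re(\varepsilon)$ is real-valued and positive, and inherits the $C^{(1,1)}$ regularity, the non-trapping conditions \eqref{eqn:trap}, and the bound $\normLinf{\Re(\varepsilon)}{\Omega}\le\normLinf{\varepsilon}{\Omega}$, Theorem \ref{apriori} applies to this real-coefficient problem with effective source $f-i\kappa^{2}\Im(\varepsilon)u$; its constant is controlled by $C(\kappa,\varepsilon)$, giving
\[ \normH{u}{\Omega}\le a\,\normL{f}{\Omega}+ab\,\normL{u}{\Omega}, \]
where $a=C(\kappa,\varepsilon)(1+\kappa^{3})$ and $b=\kappa^{2}\normLinf{\varepsilon}{\Omega}$. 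Existence and uniqueness of $u$ in the complex-$\varepsilon$ case then follow from the Fredholm alternative together with this a-priori bound, as in Theorem \ref{apriori} and \cite{A-Bao}.

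To convert the self-referential estimate into a clean bound in terms of $\normL{f}{\Omega}$, I substitute $\normL{u}{\Omega}\le\normH{u}{\Omega}$ into the displayed estimate and iterate once, obtaining
\[ \normH{u}{\Omega}\le (a+a^{2}b)\,\normL{f}{\Omega}+(ab)^{2}\,\normH{u}{\Omega}. \]
Provided $(ab)^{2}\le 1/2$, absorbing the last term into the left-hand side yields $\normH{u}{\Omega}\le 2a(1+ab)\normL{f}{\Omega}=(2a+2a^{2}b)\normL{f}{\Omega}$, which matches the explicit constant $C_{1}(\kappa,\varepsilon)$ in the statement.

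The main obstacle is this final absorption step, which requires the product $ab=C(\kappa,\varepsilon)(1+\kappa^{3})\kappa^{2}\normLinf{\varepsilon}{\Omega}$ to be controlled. If that size restriction fails, I would fall back on the extra hypothesis $\Im(\varepsilon)>0$: testing with $v=u$ in \eqref{eqn:problem1} and taking the imaginary part gives
\[ \kappa^{2}\int_{\Omega}\Im(\varepsilon)\,|u|^{2}\le\normL{f}{\Omega}\normL{u}{\Omega}, \]
since the boundary contributions from $T^{\pm}$ have nonnegative imaginary parts, as computed inside the proof of Theorem \ref{apriori}. This produces an independent $L^{2}$ bound on $u$ that can be inserted in place of $\normL{u}{\Omega}$ in the first displayed inequality, closing the argument without any smallness assumption, at the cost of a constant that depends on $\min_{\Omega}\Im(\varepsilon)$ rather than on $\normLinf{\varepsilon}{\Omega}$ alone.
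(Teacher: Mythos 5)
There is a genuine gap, and it sits exactly where you flagged it: the absorption step of your primary argument. The smallness condition $(ab)^{2}\le 1/2$, i.e.\ $C(\kappa,\varepsilon)(1+\kappa^{3})\,\kappa^{2}\normLinf{\varepsilon}{\Omega}\lesssim 1$, is not implied by the hypotheses of the corollary and will fail in essentially every case of interest ($C(\kappa,\varepsilon)\ge 1$ by construction and $\kappa^{2}\normLinf{\varepsilon}{\Omega}$ is typically large). Moreover the ``iterate once'' step buys nothing: $(ab)^{2}\le 1/2$ is equivalent to $ab\le 1/\sqrt{2}$, which is the same smallness you would need to absorb without iterating. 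So the primary route does not prove the statement. Your fallback is the right mechanism — it is essentially how the result is actually obtained (the paper simply cites Lechleiter--Ritterbusch, Corollary 5.1, and the same device appears verbatim in the paper's own proof of Theorem \ref{thmmetallic}, inequality \eqref{ineq2}) — but as you execute it, it does not prove the corollary as stated either: bounding $\int_{\Omega}\Im(\varepsilon)|u|^{2}$ from below by $\min_{\Omega}\Im(\varepsilon)\normLs{u}{\Omega}$ produces a constant involving $1/\min_{\Omega}\Im(\varepsilon)$, whereas the claimed constant $C_{1}(\kappa,\varepsilon)=2C(\kappa,\varepsilon)(1+\kappa^{3})+2C(\kappa,\varepsilon)^{2}(1+\kappa^{3})^{2}\kappa^{2}\normLinf{\varepsilon}{\Omega}$ involves only $C(\kappa,\varepsilon)$, $\kappa$ and $\normLinf{\varepsilon}{\Omega}$.

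The fix is a weighted use of your own energy identity, with no smallness assumption. From the real-part problem you have $\normH{u}{\Omega}\le a\normL{f}{\Omega}+a\kappa^{2}\normL{\Im(\varepsilon)u}{\Omega}$ with $a=C(\kappa,\varepsilon)(1+\kappa^{3})$. Estimate the perturbation by splitting one factor of $\Im(\varepsilon)$:
\begin{equation*}
\kappa^{2}\normL{\Im(\varepsilon)u}{\Omega}\le \kappa\normLinf{\varepsilon}{\Omega}^{1/2}\,\kappa\normL{\sqrt{\Im(\varepsilon)}\,u}{\Omega},
\qquad
\kappa^{2}\normLs{\sqrt{\Im(\varepsilon)}\,u}{\Omega}\le \normL{f}{\Omega}\normH{u}{\Omega},
\end{equation*}
the second inequality being your imaginary-part identity (using the nonnegativity of $\Im\int_{\Gamma_{\pm H}}\overline{u}T^{\pm}(u)$). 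Then Young's inequality gives $a\kappa\normLinf{\varepsilon}{\Omega}^{1/2}\big(\normL{f}{\Omega}\normH{u}{\Omega}\big)^{1/2}\le \tfrac12 a^{2}\kappa^{2}\normLinf{\varepsilon}{\Omega}\normL{f}{\Omega}+\tfrac12\normH{u}{\Omega}$, and absorbing $\tfrac12\normH{u}{\Omega}$ into the left-hand side yields $\normH{u}{\Omega}\le\big(2a+a^{2}\kappa^{2}\normLinf{\varepsilon}{\Omega}\big)\normL{f}{\Omega}$, which is the stated $C_{1}(\kappa,\varepsilon)$ (up to the harmless factor $2$ in the second term). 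This closes the argument unconditionally and matches the explicit constant; your iteration and the $\min_{\Omega}\Im(\varepsilon)$ detour should both be dropped in favor of this step.
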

\begin{proof}
This follows from \cite{Lechleiter} Corollary 5.1.
\qed
\end{proof}
\begin{remark}
The non-trapping conditions \eqref{eqn:trap} can be altered so that the signs of the conditions are all reversed. Under those assumptions, along with $\Re({\varepsilon}^{-}-{\varepsilon})\geq 0$ on $\Gamma_{-H}$, the same \textit{a-priori} estimate holds. 
\end{remark}

In the previous corollary we provided an \textit{a-priori}  estimate for the case where $\Re({\varepsilon})>0$ and $\Im({\varepsilon})>0$. Now we prove an \textit{a-priori}  estimate for the case where $\Re({\varepsilon})\leq 0$ and $\Im({\varepsilon})>c_{1}>0$. This case is necessary to allow, for example, metallic gratings. \begin{theorem} \label{thmmetallic}
Suppose that ${\varepsilon}$ satisfies the same conditions as in Theorem \ref{apriori}, but $\Re({\varepsilon})\leq 0$ and $\Im({\varepsilon})>c_{1}>0$. Then there is a constant $C_{2}(\kappa,{\varepsilon})>0$ such that 
\[\normH{u}{\Omega}\leq C_{2}(\kappa,{\varepsilon})\normL{f}{\Omega}. \]
\end{theorem}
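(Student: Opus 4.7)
The plan is to exploit the bulk absorption $\Im({\varepsilon})>c_{1}>0$ to obtain the $L^{2}$ bound directly from the energy identity, bypassing the Rellich identity of Section \ref{Rellich} entirely; as a by-product the non-trapping geometric hypotheses of Theorem \ref{apriori} play no essential role here. First I would choose $v=u$ in the variational problem \eqref{eqn:problem1} to obtain
\[
\int_{\Omega}\bigl(|\nabla u|^{2}-\kappa^{2}{\varepsilon}|u|^{2}\bigr)-\int_{\Gamma_{H}}\overline{u}T^{+}(u)-\int_{\Gamma_{-H}}\overline{u}T^{-}(u)=-\int_{\Omega}f\overline{u}.
\]
The sign computation for the Dirichlet-to-Neumann operators already carried out in the proof of Theorem \ref{apriori} (extended to complex ${\varepsilon}^{\pm}$ with $\Im({\varepsilon}^{\pm})\geq 0$ by choosing the principal branch of $\sqrt{\kappa^{2}{\varepsilon}^{\pm}-\alpha_{n}^{2}}$ so that $\beta_{n}$ lies in the closed upper half plane) shows that $\Im\int_{\Gamma_{H}}\overline{u}T^{+}(u)\geq 0$, and similarly on $\Gamma_{-H}$. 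Taking the imaginary part of the identity above and using $\Im({\varepsilon})>c_{1}$ uniformly,
\[
\kappa^{2}c_{1}\normLs{u}{\Omega}\leq \kappa^{2}\int_{\Omega}\Im({\varepsilon})|u|^{2}\leq \left|\Im\int_{\Omega}f\overline{u}\right|\leq \normL{f}{\Omega}\normL{u}{\Omega},
\]
so $\normL{u}{\Omega}\leq (\kappa^{2}c_{1})^{-1}\normL{f}{\Omega}$.

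Next I would take the real part of the same identity. Because $\Re({\varepsilon})\leq 0$, the term $-\kappa^{2}\int_{\Omega}\Re({\varepsilon})|u|^{2}$ is non-negative; by the DtN analysis, $-\Re\int_{\Gamma_{H}}\overline{u}T^{+}(u)\geq 0$ (the evanescent contributions carry a negative real part, and when $\Im({\varepsilon}^{+})>0$ the propagating contributions also acquire a non-positive real part), with the analogous inequality on $\Gamma_{-H}$. Moving these non-negative quantities to the right and using the $L^{2}$ bound just obtained yields
\[
\normLs{\nabla u}{\Omega}\leq -\Re\int_{\Omega}f\overline{u}\leq \normL{f}{\Omega}\normL{u}{\Omega}\leq (\kappa^{2}c_{1})^{-1}\normLs{f}{\Omega}.
\]
Adding the two bounds produces an explicit constant of the form $C_{2}(\kappa,{\varepsilon})\sim (\kappa^{2}c_{1})^{-1}+(\kappa^{2}c_{1})^{-1/2}$. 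Existence and uniqueness of $u$ then follow from this a-priori estimate via the inf-sup condition for $b_{{\varepsilon}}$, exactly as at the end of the proof of Theorem \ref{apriori}.

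The only real obstacle is the bookkeeping for the DtN signs when the outer permittivities ${\varepsilon}^{\pm}$ are genuinely complex; once the principal branch of the square root is fixed so that $\beta_{n}$ lies in the closed upper half plane for every $n$, the sign pattern used in the proof of Theorem \ref{apriori} carries over verbatim. The argument is substantially shorter than the real-valued case because the volumetric absorption takes over the role played previously by the Rellich identity and Lemma \ref{ineq}.
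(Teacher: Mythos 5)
Your proof is correct, but it is a genuinely different argument from the paper's. The paper keeps the Rellich machinery in play: it introduces the shifted permittivity $\hat{{\varepsilon}}=\Phi+i\Im({\varepsilon})$ with $\Phi=\mathcal{C}\sqrt{\Im({\varepsilon})}+\Re({\varepsilon})>0$, $\mathcal{C}=(\normLinf{\Re({\varepsilon})}{\Omega}+1)/\sqrt{c_{1}}$, rewrites the Helmholtz equation as $\Delta u+\kappa^{2}\hat{{\varepsilon}}u=f-\kappa^{2}[\Re({\varepsilon})-\Phi]u$, applies the non-trapping \textit{a-priori} estimate of Theorem \ref{apriori} to this auxiliary problem treating the extra term as data, and then controls $\normL{\sqrt{\Im({\varepsilon})}u}{\Omega}$ exactly as you do, by the imaginary part of the energy identity with $v=u$ and the sign of the DtN terms. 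You instead bypass the Rellich identity, Lemma \ref{ineq}, and Theorem \ref{apriori} entirely: the imaginary part of $b_{{\varepsilon}}(u,u)=-\int_{\Omega}f\overline{u}$ together with $\Im({\varepsilon})>c_{1}$ gives $\normL{u}{\Omega}\leq(\kappa^{2}c_{1})^{-1}\normL{f}{\Omega}$, and the real part, using $\Re({\varepsilon})\leq 0$ in $\Omega$ and $\Re\int_{\Gamma_{\pm H}}\overline{u}T^{\pm}(u)\leq 0$ (which indeed persists for $\Re({\varepsilon}^{\pm})>0$, $\Im({\varepsilon}^{\pm})\geq 0$ with the principal branch putting $\beta_{n}$ in the closed first quadrant), gives $\normLs{\nabla u}{\Omega}\leq\normL{f}{\Omega}\normL{u}{\Omega}$; your sign computations match those already established in the proof of Theorem \ref{apriori}. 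What each approach buys: the paper's route expresses $C_{2}(\kappa,{\varepsilon})$ through the constant $C(\kappa,{\varepsilon})$ of Theorem \ref{apriori}, which is the form tracked later (Lemma \ref{Cbound} and its remark on $C_{2}(\kappa,{\varepsilon}_{h})\leq C_{2}(\kappa,{\varepsilon})$), but it inherits the non-trapping hypotheses through the auxiliary problem; your route yields a fully explicit constant depending only on $\kappa$ and $c_{1}$, makes no use of the non-trapping conditions (so the stated hypothesis "same conditions as Theorem \ref{apriori}" is stronger than you need), and its $h$-independence for ${\varepsilon}_{h}$ is immediate since ${\varepsilon}_{h}$ samples ${\varepsilon}$ and hence $\Im({\varepsilon}_{h})>c_{1}$. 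The only soft spot, the deduction of existence from the \textit{a-priori} estimate via a Fredholm/inf-sup argument, is handled at exactly the same level of detail as in the paper itself, so it is not a gap relative to the paper's standard.
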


\begin{proof}
Since ${\varepsilon} \in C^{(1,1)}(\overline{\Omega_{k}})$ for $k=1,2,\cdots,I+1$, it follows that $-\Re({\varepsilon}) \leq \normLinf{\Re({\varepsilon})}{\Omega}.$  Thus by taking $\mathcal{C}=\frac{\normLinf{\Re({\varepsilon})}{\Omega}+1}{\sqrt{c_{1}}}$, we define the function
\[\Phi(x_{1},x_{2})=\mathcal{C}\sqrt{\Im({\varepsilon})}+\Re({\varepsilon}), \]
and note that $\Phi>0$ by our choice of $\mathcal{C}$. Now we rewrite the Helmholz equation \eqref{eqn:PDE} as
\[\Delta u +\kappa^{2}\bigg[\Phi+i\Im({\varepsilon})\bigg] u=
f-\kappa^{2}\bigg[\Re({\varepsilon})-\Phi \bigg]u. \]
As $\hat{{\varepsilon}}=\Phi+i\Im({\varepsilon})$ satisfies $\Re(\hat{{\varepsilon}})=\Phi>0$ and $\Im(\hat{{\varepsilon}})=\Im({\varepsilon})>c_{1}>0$,   by Theorem \ref{apriori} the inequality
\begin{align*}
    \normH{u}{\Omega}&\leq C(\kappa,{\varepsilon})\normL{f-\kappa^{2}\big(\Re({\varepsilon})-\Phi \big)u}{\Omega}\\
    &\leq C(\kappa,{\varepsilon})\normL{f}{\Omega}+C(\kappa,{\varepsilon})\mathcal{C}\kappa^{2}\normL{\sqrt{\Im({\varepsilon})}u}{\Omega},
\end{align*}
  follows by our choice of $\Phi$. Like before, we take the imaginary part of the variational formulation \eqref{eqn:problem1} with $v=u$, and recall that $\Im \int_{\Gamma_{H}}\overline{u}T^{+}(u)+\Im \int_{\Gamma_{-H}}\overline{u}T^{-}(u)\geq 0$,
\begin{equation} \label{ineq2}
2C(\kappa,{\varepsilon})\mathcal{C}\kappa^{2}\normL{\sqrt{\Im({\varepsilon})}u}{\Omega}\leq \big(C(\kappa,{\varepsilon})\mathcal{C}\kappa\big)^{2}\normL{f}{\Omega}+\normH{u}{\Omega}.
 \end{equation}
Finally, we obtain a similar \textit{a-priori}  estimate found in Section \ref{priori}, namely
\[\normH{u}{\Omega}\leq \max\{2,C(\kappa,{\varepsilon})\mathcal{C}\}C(\kappa,{\varepsilon}) \big(1+\kappa^{2} \big)\normL{f}{\Omega}. \]
\qed
\end{proof}
\begin{lemma} \label{Cbound}
Suppose ${\varepsilon}$ satisfies the non-trapping conditions \eqref{eqn:trap}. Then ${\varepsilon}_{h}$ also satisfies them, and for all $h>0$, $$C(\kappa,{\varepsilon}_{h})\leq C(\kappa,{\varepsilon}).$$ 
\end{lemma}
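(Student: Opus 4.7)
My plan is first to verify the three non-trapping conditions for ${\varepsilon}_h$ and then to compare each factor appearing in the explicit constant from Theorem \ref{apriori}.

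The first two conditions in \eqref{eqn:trap} together imply that ${\varepsilon}$ is non-decreasing in $x_2$. For ${\varepsilon}_h$, the derivative condition $\partial_{x_2}{\varepsilon}_h=0\geq 0$ is automatic inside each slice. For the jump condition, on a horizontal segment of a stairstepped interface $\Gamma_k^h$ sitting at $x_2=h_j$, I use monotonicity to write
\begin{equation*}
\jmp{{\varepsilon}_h}_{\Gamma_k^h}(x_1)={\varepsilon}(x_1,h_{j+\frac12})-{\varepsilon}(x_1,h_{j-\frac12})=\int_{h_{j-1/2}}^{h_{j+1/2}}\frac{\partial{\varepsilon}}{\partial x_2}\,dx_2+\jmp{{\varepsilon}}_{\Gamma_k}(x_1)\geq \jmp{{\varepsilon}}_{\Gamma_k}(x_1)>0
\end{equation*}
whenever this segment actually approximates $\Gamma_k$. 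The top-boundary condition is inherited from monotonicity since $\Re({\varepsilon}_h(\cdot,H))=\Re({\varepsilon}(\cdot,h_{S-\frac12}))\leq \Re({\varepsilon}(\cdot,H))\leq \Re({\varepsilon}^+)$.

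For the comparison of constants, $\rho$ depends only on ${\varepsilon}^+$ which is unchanged, and $\normLinf{{\varepsilon}_h}{\Omega}\leq \normLinf{{\varepsilon}}{\Omega}$ since ${\varepsilon}_h$ is a pointwise sample of ${\varepsilon}$. It remains to bound the geometric constant in \eqref{constant} by showing
\begin{equation*}
\min_{\hat{\Gamma}_k^h}|\nu_2|\cdot \min_k \inf_{\Gamma_k^h}\big((x_2+H)\jmp{{\varepsilon}_h}_{\Gamma_k^h}\big)\geq \min_{\hat{\Gamma}_k}|\nu_2|\cdot \min_k \inf_{\Gamma_k}\big((x_2+H)\jmp{{\varepsilon}}_{\Gamma_k}\big).
\end{equation*}
Two observations drive the argument: since $\hat{\Gamma}_k^h$ consists solely of horizontal segments, the left $|\nu_2|$ factor is exactly $1$, an improvement over $\min_{\hat{\Gamma}_k}|\nu_2|\leq 1$; and the pointwise jump comparison just derived gives $\jmp{{\varepsilon}_h}_{\Gamma_k^h}\geq \jmp{{\varepsilon}}_{\Gamma_k}$.

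The technical heart, and what I expect to be the main obstacle, is the weight factor $(x_2+H)$. The horizontal stairstep at $x_2=h_j$ satisfies $|h_j-g_k(x_1)|\leq h/2$, so $(h_j+H)$ can fall just short of $(g_k(x_1)+H)$. The way I plan to close this gap is to absorb the deficit into the factor $\min_{\hat{\Gamma}_k}|\nu_2|\leq 1$ on the right-hand side, exploiting the geometric relation between the stairstep placement and the slope $g_k'$ of the original interface; together with the pointwise jump improvement from the monotonicity integral, this should yield the inequality for every $h>0$.
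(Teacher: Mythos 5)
Your route is essentially the paper's route, and the portion you actually complete is the entirety of the paper's own proof: the paper argues (i) $\normLinf{{\varepsilon}_{h}}{\Omega}\leq\normLinf{{\varepsilon}}{\Omega}$ directly from the sampling definition \eqref{eq:approx}, and (ii) that the jump term for ${\varepsilon}_{h}$ lives only on horizontal stairstep sections (where $|\nu_{2}|=1$), on which monotonicity of ${\varepsilon}$ in $x_{2}$ gives $[{\varepsilon}_{h}]={\varepsilon}(x_{1},h_{j+\frac12})-{\varepsilon}(x_{1},h_{j-\frac12})\geq|\nu_{2}|\jmp{{\varepsilon}}_{\hat{\Gamma}_{k}}>0$; it then concludes $C(\kappa,{\varepsilon}_{h})\leq C(\kappa,{\varepsilon})$. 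Your verification of the non-trapping conditions (the integral-plus-jump identity, and inheritance of $\Re({\varepsilon}^{+}-{\varepsilon}_{h})\geq0$ on $\Gamma_{H}$) is a slightly more explicit version of the same computation, and your jump bound $\jmp{{\varepsilon}_{h}}\geq\jmp{{\varepsilon}}$ is in fact stronger than the paper's $\geq|\nu_{2}|\jmp{{\varepsilon}}$.

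The genuine gap in your write-up is exactly the step you yourself flag and then defer: the weight $(x_{2}+H)$ in \eqref{constant} is evaluated at the stairstep level $h_{j}$ for ${\varepsilon}_{h}$ but along $\Gamma_{k}$ for ${\varepsilon}$, and these heights can differ by up to $h/2$. You propose to absorb this deficit into $\min_{\hat{\Gamma}_{k}}|\nu_{2}|\leq1$, but you never carry the absorption out, and as stated it cannot work uniformly in $h$: it would require roughly $h/2\leq\bigl(1-\min_{\hat{\Gamma}_{k}}|\nu_{2}|\bigr)\inf_{\Gamma_{k}}(x_{2}+H)$, and for a gently sloped interface (slope $s$ small, so $1-\min|\nu_{2}|\approx s^{2}/2$) with ${\varepsilon}$ piecewise constant (so the monotonicity surplus $\int\partial_{2}{\varepsilon}$ vanishes) this fails once $h$ is not small compared with $s^{2}\,\inf_{\Gamma_{k}}(x_{2}+H)$ — yet the lemma claims the bound for all $h>0$. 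So as a self-contained argument the proposal is incomplete at its decisive step. In fairness, the paper's proof is no more detailed here: it stops at the jump inequality (ii) and never addresses the $(x_{2}+H)$ mismatch, so the obstacle you identified is real and is not resolved in the paper either; closing it would need either smallness of $h$ (relative to $\delta$ and the interface slopes) or exploitation of the surplus $\int\partial_{2}{\varepsilon}$ when ${\varepsilon}$ is not piecewise constant, rather than the $|\nu_{2}|$-absorption you sketch.
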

\begin{proof}
By the definition \eqref{eq:approx}, it is clear that $\normLinf{{\varepsilon}}{\Omega}\geq\normLinf{{\varepsilon}_{h}}{\Omega}$. Now for any fixed stairstep interface $\Gamma_{k}$, the jump term in the definition of $C(\kappa,{\varepsilon}_{h})$ only appears on the horizontal sections. Since $\frac{\partial {\varepsilon}}{\partial x_{2}}\geq 0$, it follows that  
\begin{align*}
    [{\varepsilon}_{h}]_{\hat{\Gamma_{k}}}&={\varepsilon}(x_{1},h_{j+\frac{3}{2}})-{\varepsilon}(x_{1},h_{j-\frac{1}{2}})\\
    &\geq |\nu_{2}|\jmp{{\varepsilon}}_{\hat{\Gamma_{k}}}\\
    &> 0
\end{align*}
on any slice $S_{j}$.
\end{proof}
\begin{remark}
(1) Since the constants $C_{1}(\kappa,{\varepsilon})$ and $C_{2}(\kappa,{\varepsilon})$ are defined in terms of $C(\kappa,{\varepsilon})$, it also holds that $C_{1}(\kappa,{\varepsilon}_{h})\leq C_{1}(\kappa,{\varepsilon}) $ and $C_{2}(\kappa,{\varepsilon}_{h})\leq C_{2}(\kappa,{\varepsilon}) $. \\
(2) If the non-trapping conditions are not satisfied, we cannot assert that the $C(\kappa,\epsilon_{h})$ is bounded independent of $h$. Indeed, if $\Im(\epsilon)=0$, it may be that $\kappa^{2}$ is an exceptional frequency for the $\epsilon_{h}$ problem. Then $C(\kappa,\epsilon_{h})$ would not be bounded. Even if $\Im(\epsilon)>0$, it may be that $C(\kappa,\epsilon_{h})$ depends poorly on $h$. In most problems this will not be the case, so we expect RCWA to converge even for trapping domains.
\end{remark}

\section{An adjoint problem} \label{metallic}
We now study an adjoint problem, related to \eqref{eqn:problem1}. Given an $f\in L^{2}(\Omega)$, let $z_{f}\in V$ be the unique solution to the adjoint problem
\begin{equation}\label{adjoint}
\overline{b_{{\varepsilon}}(\xi, z_{f})}=-\int_{\Omega}f\overline{\xi}, 
\end{equation}
for all $\xi \in V$.
The function $z_{f}$ exists and is unique because it solves the same problem to \eqref{eqn:problem1} with $\overline{f}$ on the right hand side, and the same \textit{a-priori} estimates hold. 

To analyze the regularity of the solution, we extend ${\varepsilon}$ to the left and right by periodicity, and extend above and below by including some finite subset of the half spaces $\Omega_{H}^{+}$ and $\Omega_{H}^{-}$. We fix a $\hat{\delta}>0$ such that $\hat{\delta}<L_{x}$ and define $H^{+}=H+\hat{\delta}$ and $H^{-}=-H-\hat{\delta}$. Let $\Omega^{+}= \{\bm{x} \in \Omega_{H}^{+}, x_{2}\leq H^{+} \}$ and define $\Omega^{-}$ in a similar way. Then the domain $\Omega$ extended is given by
\[\Omega^{E}=\{\bm{x}\in \mathbb{R}^{2}, -(\zeta-1)L_{x}<x_{1}<\zeta L_{x}, -H^{-}<x_{2}<H^{+} \}, \]
where $\zeta$ is the smallest positive integer such that $2\zeta+1>\frac{2H+\hat{\delta}}{L_{x}}.$ Thus we can extend ${\varepsilon}$ to $\Omega^{E}$ by recalling that ${\varepsilon}={\varepsilon}^{+}$ in $\Omega^{+}$ and ${\varepsilon}={\varepsilon}^{-}$ in $\Omega^{-}$.

\begin{figure}[h] 
	\centering
	\includegraphics[width=4in]{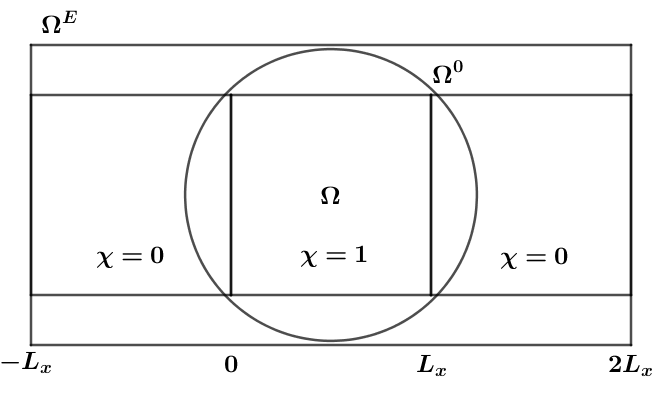}
	\caption{An illustration of the extended domain $\Omega^{E}$, with $\zeta=2$.}
	\label{fig:extended}
\end{figure}

\begin{theorem} 
\label{boundt}
Let $z_{f}$ be the solution to the adjoint problem \eqref{adjoint}. Then there exists a constant $C_{3}(\kappa)>0$ independent of ${\varepsilon}$ and $h$ such that
\[\normH{z_{f}}{\Omega^{+}} \lesssim C_{3}(\kappa)^{1/2} \normH{z_{f}}{\Omega},\]
where $C_{3}(\kappa)=H^{+}-H+(1+\kappa^{2})(1+\max_{n\in \mathbb{Z}}\frac{1}{\sqrt{\alpha_{n}^{2}-\kappa^{2}}})+\sqrt{3}(\kappa+\frac{2\pi}{L_{x}}).$
\end{theorem}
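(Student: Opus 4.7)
The plan is to exploit the fact that in the strip $\Omega^+ = \{0<x_1<L_x,\ H<x_2<H^+\}$ the relative permittivity equals the constant $\varepsilon^+$, so $z_f|_{\Omega^+}$ admits an explicit Fourier series in $x_1$ whose Fourier coefficients are pure exponentials in $x_2$. I would then reduce $\|z_f\|_{H^1(\Omega^+)}^2$ to a weighted $\ell^2$ sum of the trace coefficients $z_{f,n}(H)$ and bound this, via the trace theorem, by the $H^{1/2}(\Gamma_H)$ norm of $z_f|_{\Gamma_H}$, which is controlled by $\|z_f\|_{H^1(\Omega)}$.

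First, since $z_f\in V$ solves the adjoint problem \eqref{adjoint} and $\varepsilon\equiv\varepsilon^+$ in $\Omega_H^+\supset\Omega^+$, $z_f$ satisfies the homogeneous Helmholtz equation with constant coefficient there. The adjoint of the angular spectrum representation \eqref{eqn:Raleigh} selects, on each Fourier mode, the sign of $\beta_n$ that keeps the exponential bounded on the strip. Arguing as in the derivation of \eqref{eqn:Raleigh}, I would write
\[
 z_f(x_1,x_2)=\sum_{n\in\mathbb{Z}} z_{f,n}(H)\exp\bigl(\pm i\beta_n(x_2-H)\bigr)\exp(i\alpha_n x_1),\qquad (x_1,x_2)\in\Omega^+,
\]
with the sign of $\beta_n$ per mode chosen so that $|\exp(\pm i\beta_n(x_2-H))|\le 1$ on $[H,H^+]$.

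Applying Parseval on each horizontal slice and integrating in $x_2$,
\begin{align*}
 \|z_f\|_{L^2(\Omega^+)}^2 &= L_x\sum_{n\in\mathbb{Z}} |z_{f,n}(H)|^2\, M_n,\\
 \|\nabla z_f\|_{L^2(\Omega^+)}^2 &= L_x\sum_{n\in\mathbb{Z}} \bigl(\alpha_n^2+|\beta_n|^2\bigr)\,|z_{f,n}(H)|^2\, M_n,
\end{align*}
where $M_n:=\int_H^{H^+}|\exp(\pm i\beta_n(x_2-H))|^2\,dx_2$ equals $H^+-H$ for propagating modes ($\alpha_n^2\le \kappa^2\varepsilon^+$) and is bounded by $(2\sqrt{\alpha_n^2-\kappa^2})^{-1}$ for evanescent modes ($\alpha_n^2>\kappa^2\varepsilon^+$). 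Recalling $\|z_f|_{\Gamma_H}\|_{H^{1/2}(\Gamma_H)}^2 = L_x\sum_n (1+\alpha_n^2)^{1/2}|z_{f,n}(H)|^2$, it therefore suffices to establish the mode-by-mode inequality
\[
 (1+\alpha_n^2+|\beta_n|^2)\,M_n \;\le\; C_3(\kappa)\,(1+\alpha_n^2)^{1/2}\qquad \forall n\in\mathbb{Z}.
\]

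The last step is where the three terms of $C_3(\kappa)$ arise. For propagating modes, $|\beta_n|^2=\kappa^2\varepsilon^+-\alpha_n^2$, so $(1+\alpha_n^2+|\beta_n|^2)M_n=(H^+-H)(1+\kappa^2\varepsilon^+)$, which is absorbed by the $H^+-H$ summand in $C_3(\kappa)$. For evanescent modes, writing $(1+\alpha_n^2+|\beta_n|^2)=(1-\kappa^2)+2\alpha_n^2$ and $\alpha_n^2=(\alpha_n^2-\kappa^2)+\kappa^2$, the $(1-\kappa^2)$ and $\kappa^2$ pieces give contributions of order $(1+\kappa^2)/\sqrt{\alpha_n^2-\kappa^2}$, bounded uniformly by $(1+\kappa^2)\bigl(1+\max_n(\alpha_n^2-\kappa^2)^{-1/2}\bigr)$, while the $2(\alpha_n^2-\kappa^2)M_n\le \sqrt{\alpha_n^2-\kappa^2}$ piece is controlled using $|\alpha_n|\le \kappa+2\pi|n|/L_x$ together with the elementary inequality $(a+b+c)^2\le 3(a^2+b^2+c^2)$, which produces the $\sqrt{3}(\kappa+2\pi/L_x)$ factor. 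Combining the two parts and invoking the continuity of the trace map $H^1(\Omega)\to H^{1/2}(\Gamma_H)$ yields the stated bound. The main obstacle is the evanescent estimate near the light cone $\alpha_n^2\approx\kappa^2$, which is precisely what forces the appearance of $\max_n(\alpha_n^2-\kappa^2)^{-1/2}$ in $C_3(\kappa)$; the assumption (already invoked after \eqref{eqn:angular}) that no $\alpha_n^2$ equals $\kappa^2\varepsilon^+$ ensures this maximum is finite.
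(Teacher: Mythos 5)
Your proposal is correct and follows essentially the same route as the paper: expand $z_f$ in $\Omega^+$ via the Rayleigh/angular-spectrum representation, apply Parseval slice-by-slice, bound the propagating and evanescent mode factors separately (with the evanescent bound $M_n\le(2\sqrt{\alpha_n^2-\kappa^2})^{-1}$ and the growth bound on $|\alpha_n|$ producing the $\max_n(\alpha_n^2-\kappa^2)^{-1/2}$ and $\sqrt{3}(\kappa+2\pi/L_x)$ terms), and finish with the trace theorem from $H^1(\Omega)$ to $H^{1/2}(\Gamma_H)$. The only cosmetic difference is that you weight the trace coefficients by $(1+\alpha_n^2)^{1/2}$ while the paper uses $(1+n^2)^{1/2}$, which are equivalent norms and immaterial under the $\lesssim$ in the statement.
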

\begin{proof}
The Raleigh expansion 
\[z_{f}(\bm{x})=\sum_{n \in \mathbb{Z}}(z_{f})_{n}(H) \exp(i(x_{2}-H)\beta_{n})\exp(i\alpha_{n}x_{1}) \]
is valid for all $\bm{x}\in \Omega_{H}^{+}$. After using Parseval's Theorem, it follows that
\begin{align} \label{Raleigh}
\normHs{z_{f}}{\Omega^{+}}&=(H^{+}-H)\sum_{\alpha_{n}^{2}<\kappa^{2}}|(z_{f})_{n}(H)|^{2}(1+\kappa^{2})\\
&+\sum_{\kappa^{2}<\alpha_{n}^{2}}|(z_{f})_{n}(H)|^{2}\frac{1+\alpha_{n}^{2}}{2\Im\beta_{n}}
\left\{1-\exp\left[-2(H^{+}-H)\Im\beta_{n}\right]\right\}. \nonumber
\end{align}
The term involving $\alpha_{n}^{2}$ on the right  side of \eqref{Raleigh},
\begin{equation} \label{bound}
\frac{1+\kappa^{2}}{2\sqrt{\alpha_{n}^{2}-\kappa^{2}}}+\frac{\alpha_{n}^{2}}{2\sqrt{\alpha_{n}^{2}-\kappa^{2}}}\leq \mathscr{K}_{0}+\mathscr{K}_{1}(1+n^{2})^{1/2}
\end{equation}
for all $n$, where $\mathscr{K}_{1}=\sqrt{3}(\kappa+\frac{2\pi}{L_{x}})$, and $\mathscr{K}_{0}$ is the maximum of the first term on the left hand side of \eqref{bound}. We have used that 
\[|\alpha_{n}|<\mathscr{K}_{1}(1+n^{2})^{1/2}. \]
Then from \eqref{Raleigh}, we see that
\[\normH{z_{f}}{\Omega^{+}}\leq C_{3}(\kappa)^{1/2}
\bigg[
\sum_{n \in \mathbb{Z}}|(z_{f})_{n}(H)|^{2} (1+n^{2})^{1/2} 
\bigg]^{1/2}.\]
The proof follows by the Trace Theorem {\cite{BrennerScott}}. \qed
\end{proof}
We extend the solution $z_{f}$ to the domain $\Omega^{E}$ by quasi-periodicity to the left and right, and by the Raleigh expansion \eqref{eqn:Raleigh} above and below. We obtain the extended solution $z_{f}^{E}$ on $\Omega^{E}.$ It is useful in the in following discussion to define a restriction of $z_{f}^{E}$ to a subset of $\Omega^{E}$, namely
\[\Omega^{0}=\{ \bm{x}\in \mathbb{R}^{2}, \big|\bm{x}-(\frac{L_{x}}{2},0)\big| < H+\hat{\delta}\}, \]
such that $\Omega \subset \Omega^{0} \subset \Omega^{E}$. An illustration of this extended domain is given in Figure \ref{fig:extended}. Let $\chi$ be a smooth cutoff function such that $\chi=0$  on $\partial \Omega^{0}$ and $\chi=1$ in $\Omega$. Then the restriction $w=\chi z_{f}^{E}$ solves the Poisson problem
\begin{align} \label{Poisson}
\Delta w &= (\overline{f^{E}}-\kappa^{2}{\varepsilon}^{E}z_{f}^{E})\chi + 2 \nabla z_{f}^{E}\cdot \nabla \chi + z_{f}^{E}\Delta \chi \ \ \text{in} \ \ \Omega^{0}, \\
w&=0 \hspace{171pt} \text{on} \ \ \partial \Omega^{0}. \nonumber
\end{align}
Using this observation, we have the following corollary.
\begin{corollary} \label{cor1}
Given $f\in L^{2}(\Omega)$, then the unique solution to the adjoint problem \eqref{adjoint} $z_{f} \in H^{2}(\Omega)$.
There exists a constant $C_{4}(\kappa,{\varepsilon})>0$ with the same dependence on $\kappa$ and ${\varepsilon}$ as $C(\kappa,{\varepsilon})$, such that 
\[\normHtwo{z_{f}}{\Omega}\leq C_{4}(\kappa,{\varepsilon})\normL{f}{\Omega}. \]
\end{corollary}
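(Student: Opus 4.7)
The plan is to bootstrap the $H^1$ bound for $z_f$ (which follows from Theorem~\ref{apriori} applied to the adjoint problem, since $z_f$ solves a Helmholtz problem of the same form with data $\overline{f}$) up to an $H^2$ bound by exploiting the auxiliary Dirichlet problem \eqref{Poisson} on the smoothly bounded domain $\Omega^0$. The point is that the cut-off function $\chi$ absorbs all the boundary behavior, so $w=\chi z_f^E$ is a Dirichlet-zero solution of a pure Poisson equation, and standard $H^2$ elliptic regularity on a disk applies cleanly.

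First, I would collect the $H^1$ control on the extended function $z_f^E$. On $\Omega$ itself the a-priori estimate yields $\normH{z_f}{\Omega}\leq C(\kappa,{\varepsilon})\normL{f}{\Omega}$. On $\Omega^+$ and $\Omega^-$ the Rayleigh expansion provides $H^1$ control in terms of the $H^{1/2}$ trace on $\Gamma_{\pm H}$, and Theorem~\ref{boundt} (together with its analogue below) turns this into a bound by $\normH{z_f}{\Omega}$. Finally, quasi-periodic translation to fill out the horizontal extension is an isometry, so after summing over the $O(\zeta)$ periodic copies one gets $\normH{z_f^E}{\Omega^E}\lesssim C_3(\kappa)^{1/2}\,C(\kappa,{\varepsilon})\normL{f}{\Omega}$.

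Next, I would examine the right-hand side of \eqref{Poisson}. Because $\chi$ is a fixed smooth cut-off supported in $\Omega^0$ and equal to $1$ on $\Omega$, the terms $2\nabla z_f^E\cdot\nabla\chi$ and $z_f^E\Delta\chi$ are $L^2(\Omega^0)$-bounded by $\normH{z_f^E}{\Omega^E}$, while $(\overline{f^E}-\kappa^2{\varepsilon}^E z_f^E)\chi$ is bounded in $L^2$ by $\normL{f}{\Omega}+\kappa^2\normLinf{{\varepsilon}}{\Omega}\normL{z_f^E}{\Omega^E}$; here I use that ${\varepsilon}^E$ is extended by constants $\varepsilon^\pm$ in the strips and by periodicity laterally, so $\normLinf{{\varepsilon}^E}{\Omega^E}=\normLinf{{\varepsilon}}{\Omega}$. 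Combining with the previous step, the $L^2$ norm of the right-hand side of \eqref{Poisson} is bounded by a constant with the same $\kappa,{\varepsilon}$ dependence as $C(\kappa,{\varepsilon})$ (with one extra factor of $1+\kappa^2\normLinf{{\varepsilon}}{\Omega}$) times $\normL{f}{\Omega}$.

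Finally, since $\Omega^0$ is a disk it has $C^\infty$ boundary, so the classical $H^2$ elliptic regularity theorem for the Dirichlet Laplacian gives
\[\normHtwo{w}{\Omega^0}\leq C_{\Omega^0}\bigl(\normL{\Delta w}{\Omega^0}+\normL{w}{\Omega^0}\bigr),\]
with $C_{\Omega^0}$ independent of ${\varepsilon},\kappa,h$. Because $w=z_f$ on $\Omega$, restriction gives $\normHtwo{z_f}{\Omega}\leq\normHtwo{w}{\Omega^0}$, and assembling the bounds above yields the claimed estimate with $C_4(\kappa,{\varepsilon})$ of the stated form. The main obstacle I anticipate is purely bookkeeping: tracking the $\kappa$ and ${\varepsilon}$ dependences through the steps so that $C_4$ really does share the same polynomial-in-$\kappa$, $\normLinf{{\varepsilon}}{\Omega}$-linear structure as $C(\kappa,{\varepsilon})$; in particular, care is needed to keep $C_3(\kappa)$ separated from the ${\varepsilon}$ dependence so that Lemma~\ref{Cbound}'s conclusion $C(\kappa,{\varepsilon}_h)\leq C(\kappa,{\varepsilon})$ carries over to $C_4$ for the stairstepped problem as well.
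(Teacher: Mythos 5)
Your proposal is correct and follows essentially the same route as the paper: extend $z_{f}$ quasi-periodically and via the Rayleigh expansion, bound the right-hand side of the Poisson problem \eqref{Poisson} in $L^{2}(\Omega^{0})$ using Theorem \ref{boundt} together with the \textit{a-priori} estimate, invoke standard $H^{2}$ elliptic regularity (Gilbarg--Trudinger) for the zero-Dirichlet problem on $\Omega^{0}$, and restrict back to $\Omega$ where $\chi=1$. The only differences are cosmetic bookkeeping (e.g.\ your extra $\normL{w}{\Omega^{0}}$ term, which is harmless), so no gap remains.
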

\begin{proof}
    Since the extended solution $z_{f}^{E}$ solves the Poisson problem \eqref{Poisson}, by Gilbarg and Trudinger \cite{GT} there is a constant $C>0$ independent of $h$ and ${\varepsilon}$ such that
    \begin{align*}
    \normHtwo{w}{\Omega^{0}} &\leq C \normL{(\overline{f^{E}}-\kappa^{2}{\varepsilon}^{E}z_{f}^{E})\chi + 2 \nabla z_{f}^{E}\cdot \nabla \chi + z_{f}^{E}\Delta \chi}{\Omega^{0}}  \\
    &\leq (2\zeta+1) C(1+\kappa^{2}\normLinf{{\varepsilon}}{\Omega})\bigg(\normL{f}{\Omega}+\normH{z_{f}}{\Omega \cup \Omega^{+}\cup \Omega^{-}}\bigg)\\
    &\leq C_{4}(k,{\varepsilon})\normL{f}{\Omega}.
    \end{align*}
    Here we have used Theorem \ref{boundt} and the \textit{a-priori} estimate for $z_{f}$, and that the extensions are done by multiplying by phase factors. We complete the proof by recalling that \[\normHtwo{w}{\Omega^{0}}\geq\normHtwo{z_{f}}{\Omega}.\]
    \qed
\end{proof}

\section{RCWA   for $s$-polarization state} \label{RCWA}
\subsection{Description of the RCWA}
Complete descriptions of the RCWA are available elsewhere \cite{Gaylord,Faryad,Akhlesh2}. Here, we briefly describe the approach to show that the RCWA solution denoted $u^{h,M}$ solves the variational problem \eqref{eqn:problem2} with appropriate test functions. 

First, the unknown reflected and transmitted fields and the known incident field are expanded as 
Rayleigh--Bloch waves as in \eqref{eqn:RB}. For example, we expand the reflected field as
\[u^{\text{ref}}=\sum_{n\in \mathbb{Z}}u_{n}^{\text{ref}}(x_{2})\exp(i\alpha_{n}x_{1}). \]
The corresponding Fourier coefficients are given as
 \[ \{u_{n}^{\text{ref}}(x_{2})\}_{n=-\infty}^{\infty}, \{u_{n}^{\text{tr}}(x_{2})\}_{n=-\infty}^{\infty} \text{and} \ \{u_{n}^{\text{inc}}\}_{n=-\infty}^{\infty}, \]
for the reflected, transmitted, and incident fields, respectively. 
There is only one non-zero coefficient when the incident field is a plane wave, i.e., $u_{0}^{\text{inc}}=1$. The known relative permittivity ${\varepsilon}(x_{1},x_{2})$ is expanded as a Fourier series in $x_{1}$ with coefficients $\{{\varepsilon}^{n}(x_{2})\}_{n=-\infty}^{\infty}$. This representation as well as the Raleigh--Bloch expansions given by \eqref{eqn:RB} for the electromagnetic fields are substituted into Maxwell's equations. For the case where the incident plane wave is $s$-polarized, the resulting system can be written as the second-order ODE \cite{Faryad,Akhlesh2}
\begin{equation} \label{eqn:ODE}
\frac{d^{2}}{dx_{2}^{2}}u_{n}(x_{2})+\kappa^{2}\sum_{m\in \mathbb{Z}}{\varepsilon}^{(n-m)}(x_2)u_{m}(x_{2})-\alpha_{n}^{2}u_{n}(x_{2})=0,  \end{equation}
for $x_{2}\in(-H,H)$ and $n\in \mathbb{Z}$. 

To make the method computationally tractable, \eqref{eqn:ODE} needs to be truncated to retain say $2M+1$ Fourier modes. The resulting solution is denoted $u^{M}$. However, using the true ${\varepsilon}$ renders even the truncated problem difficult to solve. Thus, the RCWA introduces another discretization: a stairstep approximation of the grating interfaces using $\epsilon_{h}$. In each slice $S_{j}$, the truncated system
\begin{equation} \label{eqn:truncated}
    \frac{d^{2}}{dx_{2}^{2}}u^{h,M}_{n}(x_{2})+\kappa^{2}\sum_{m=-M}^{M}{\varepsilon}_{h}^{(n-m)} u^{h,M}_{m}(x_{2})-\alpha_{n}^{2}u^{h,M}_{n}(x_{2})=0,
\end{equation}
is solved for all $n=-M,\cdots,M$. 

As ${\varepsilon}_{h}$ is independent of $x_{2}$ in each slice, \eqref{eqn:truncated} can be solved exactly. This is used in the derivation of a fast linear algebra algorithm for computing the RCWA solution, but is not studied here. The RCWA solution $u^{h,M}(x_{1},x_{2})$ in $\Omega$ is formed by the solution in each slice along with the continuity conditions on the inter-slice boundaries
\begin{align}
    u_{n}^{h,M}(h_{j}^{-})&=u_{n}^{h,M}(h_{j}^{+}),\\
    \frac{d}{dx_{2}}u_{n}^{h,M}(h_{j}^{-})&=\frac{d}{dx_{2}}u_{n}^{h,M}(h_{j}^{+}),
\end{align}
for all $j=1,\cdots,S-1$.
Also, we have the boundary conditions on $u^{h,M}_{n}$ and its derivative 
\begin{align*}
    u^{h,M}_{n}(H^{-})&=u_{n}^{\text{inc}}+u_{n}^{\text{ref}},\\
    u_{n}^{h,M}(-H^{+})&=u_{n}^{\text{tr}},\\
    \frac{d}{dx_{2}}u^{h,M}_{n}(H^{-})&=iu_{n}^{\text{inc}}\beta_{n}+iu_{n}^{\text{ref}}\beta_{n},\\
    \frac{d}{dx_{2}}u^{h,M}_{n}(-H^{+})&=-iu_{n}^{\text{tr}}\beta_{n}.
\end{align*}
It is useful also to define the Fourier truncation operator $\mathcal{F}_{M}:V\to V_{M}$ defined as 
\[ \mathcal{F}_{M}\bigg( \sum_{n\in \mathbb{Z}}v_{n}(x_{2})\exp(i\alpha_{n}x_{1})\bigg)=\sum_{n=-M}^{M}v_{n}(x_{2})\exp(i\alpha_{n}x_{1}),\]
for all $v\in V$. We can now give a variational characterization of $u^{M}$ and $u^{h,M}$.
\begin{theorem}
The RCWA solution given by 
\[u^{h,M}(x_{1},x_{2})=\sum_{n=-M}^{M}u^{h,M}_{n}(x_{2})\exp(i\alpha_{n} x_{1}),\label{eqn:RCWA} \]
solves the variational problem 
\begin{equation} \label{eqn:problem3}
b_{{\varepsilon}_{h}}(u^{h,M},v)=-\int_{\Omega}f\overline{v},\end{equation}
for all $v\in V_{M}$.
\end{theorem}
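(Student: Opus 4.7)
The plan is to take an arbitrary $v\in V_M$, write it as $v=\sum_{n=-M}^{M} v_n(x_2)\exp(i\alpha_n x_1)$ with $v_n\in H^1((-H,H))$, expand $b_{\varepsilon_h}(u^{h,M},v)$ using Fourier orthogonality on $(0,L_x)$, and then show that the RCWA ingredients (the truncated ODE \eqref{eqn:truncated}, the slice continuity conditions, and the top/bottom boundary conditions) collapse $b_{\varepsilon_h}(u^{h,M},v)$ into the right-hand side $-\int_\Omega f\overline{v}$.

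First I would reduce the 2D sesquilinear form to a collection of one-dimensional integrals in $x_2$. Since $u^{h,M}$ is a finite Fourier sum, the $x_1$-integrations collapse via the orthogonality of $\{\exp(i\alpha_n x_1)\}$ on $(0,L_x)$. This yields, for the principal part,
\[
\int_\Omega \nabla u^{h,M}\cdot\nabla\overline{v}
= L_x\sum_{n=-M}^{M}\int_{-H}^{H}\left(\partial_{x_2} u_n^{h,M}\,\overline{\partial_{x_2} v_n}+\alpha_n^{2} u_n^{h,M}\overline{v_n}\right)dx_2,
\]
while the potential term produces the discrete convolution $L_x\sum_{n,m=-M}^{M}\int_{-H}^{H}\varepsilon_h^{(n-m)}(x_2)u_m^{h,M}\overline{v_n}\,dx_2$, exactly matching the convolution structure in \eqref{eqn:truncated}. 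The DtN contributions at $\Gamma_{\pm H}$, by Parseval, become $L_x\sum_{n=-M}^{M} i\beta_n u_n^{h,M}(\pm H)\,\overline{v_n(\pm H)}$ (up to signs).

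Next I would split each $\int_{-H}^{H}$ into a sum over slices $S_j$ and integrate by parts in $x_2$ on $\partial_{x_2} u_n^{h,M}\overline{\partial_{x_2} v_n}$ within each slice. Because $v_n\in H^1((-H,H))$ is continuous and the inter-slice continuity conditions (24)--(25) hold for $u_n^{h,M}$ and $\partial_{x_2} u_n^{h,M}$, the boundary terms at interior slice interfaces telescope and cancel pairwise. Inside each slice, the resulting bulk integrand is
\[
-\partial_{x_2}^{2}u_n^{h,M}+\alpha_n^{2}u_n^{h,M}-\kappa^{2}\sum_{m=-M}^{M}\varepsilon_h^{(n-m)}u_m^{h,M},
\]
which vanishes identically by the truncated RCWA ODE \eqref{eqn:truncated}. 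What remains of $b_{\varepsilon_h}(u^{h,M},v)$ is precisely the surviving boundary contributions at $x_2=H^{-}$ and $x_2=-H^{+}$, together with the DtN terms from the definition of $b_{\varepsilon_h}$.

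Finally, I would combine these surviving boundary terms. Grouping yields factors of the form $\bigl[\partial_{x_2} u_n^{h,M}(H^{-})-i\beta_n u_n^{h,M}(H)\bigr]\overline{v_n(H)}$ and its counterpart at $-H$, which by the RCWA boundary conditions depend only on the known incident data $u_n^{\text{inc}}$ and $\beta_n$. Translating back through Parseval identifies this sum with $-\int_\Omega f\overline{v}$ for the canonical scattering source coming from the plane-wave data. The main obstacle is bookkeeping rather than analysis: one must carefully track the Fourier convolution so that truncation of $u^{h,M}$ and truncation of the test function align to reproduce exactly the ODE \eqref{eqn:truncated}, and verify that all interior slice-boundary terms cancel using both (24) and (25) simultaneously. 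Once those are handled, the identification with the right-hand side is immediate, and the theorem follows because $v\in V_M$ was arbitrary.
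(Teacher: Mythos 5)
Your proposal is correct and takes essentially the same approach as the paper: the paper runs the identical computation in the opposite direction, multiplying the truncated ODE system by the test-function components, integrating by parts slice by slice, and reassembling $b_{{\varepsilon}_h}(u^{h,M},v)$ via Fourier orthogonality, the inter-slice continuity conditions, the boundary conditions at $x_2=\pm H$, and the Dirichlet-to-Neumann terms, whereas you start from $b_{{\varepsilon}_h}(u^{h,M},v)$ and reduce it to the ODE. The ingredients and bookkeeping are the same, and your concluding identification of the surviving boundary terms with the right-hand side is at the same level of detail as the paper's own argument.
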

\begin{remark}
The same result holds for a truncated solution $u^{M}(x_{1},x_{2})$ to \eqref{eqn:ODE}  with the true ${\varepsilon}$.
\end{remark}
\begin{proof}
Let $v\in V_{M}$, so that $v=\sum_{m}\xi_{m}\psi_{m}$ where $\xi_{m}\in H^{1}(-H,H)$ and $\psi_{m}\in S_{M}$ for each $-M\leq m\leq M$. We multiply both sides of \eqref{eqn:truncated} by $\overline{\xi}_{m}$, integrate by parts in $x_{2}$ on each slice, and sum over all $1\leq j \leq S$ to get 
\begin{align*}
    \int_{-H}^{H}&\bigg(\frac{d}{dx_{2}}u^{h,M}_{n}(x_{2})\frac{d}{dx_{2}}\overline{\xi}_{m}(x_{2})\bigg)-\kappa^{2}\int_{-H}^{H}\bigg(({\varepsilon}_{h} u)^{h,M}_{n}(x_{2})\overline{\xi}_{m}(x_{2})\bigg)\\
    &+\int_{-H}^{H}\bigg(\alpha_{n}^{2}u^{h,M}_{n}(x_{2})\overline{\xi}_{m}(x_{2})
    -\frac{d}{dx_{2}}u^{h,M}_{n}(H^{-})\overline{\xi}_{m}(H^{-})\\
    &+\frac{d}{dx_{2}}u^{h,M}_{n}(-H^{+})\overline{\xi}_{m}(-H^{+})\bigg)=0.
\end{align*}
Then we multiply the previous equality by $\psi_{n}\overline{\psi}_{m}$, integrate with respect to $x_{1}$, and sum over $-M\leq n \leq M $. Using the boundary conditions, we then have
\begin{align*}
    \sum_{n}\int_{0}^{L_{x}}&\bigg[\frac{d}{dx_{2}}u_{n}^{h,M}(-H^{+})\overline{\xi}_{m}(-H^{+})\psi_{n}\overline{\psi}_{m}\bigg]\\
    &=\int_{0}^{L_{x}}\bigg[\sum_{n}\frac{d}{dx_{{2}}}u_{n}^{h,M}(-H^{+})\psi_{n}\bigg]
    \bigg[ \overline{\sum_{m}\xi_{m}(-H^{+})\psi_{m}}\bigg]\\
    &=-\int_{0}^{L_{x}}\bigg[i\sum_{n}u_{n}^{\text{tr}}\beta_{n}\psi_{n}\bigg] \overline{v(-H^{+})}\\
    &=-\int_{\Gamma_{-H}}T^{-}(u^{h,M})\overline{v}.
\end{align*}
Since $\alpha^{2}\psi_{n}\psi_{m}=\big(\frac{d}{d x_{1}}\psi_{n} \big)\big(\frac{d}{d x_{1}}\overline{\psi}_{m} \big)$, it follows that
\begin{align*}
    \sum_{n}\int_{\Omega}\bigg[ &\frac{d}{dx_{2}}u_{n}^{h,M}(x_{2})\frac{d}{dx_{2}}\overline{\xi}_{m}(x_{2})\psi_{n}\overline{\psi}_{m}+\alpha_{n}^{2}u_{n}^{h,M}(x_{2})\overline{\xi}_{m}\psi_{n}\overline{\psi}_{m}\bigg]\\
    &=\int_{\Omega}\bigg\{\frac{d}{dx_{2}}
    \left[\sum_{n}u_{n}^{h,M}(x_{2})\psi_{n} \right]\frac{d}{dx_{2}}\bigg(\overline{\sum_{m}\xi_{m}\psi_{n} }\bigg)\\
    &+\frac{d}{dx_{1}}\left[\sum_{n}u_{n}^{h,M}(x_{2})\psi_{n} \right]\frac{d}{dx_{1}}\bigg(\overline{\sum_{m}\xi_{m}\psi_{n} }\bigg) \bigg\}\\
    &=\int_{\Omega}\nabla u^{h,M}\cdot \nabla \overline{v}.
\end{align*}
The other terms follow in a similar way to the two shown above.
\qed
\end{proof}

\subsection{Convergence in Number of Retained Fourier Modes} \label{Fourier}
We now prove estimates for the error due to the truncation of the Fourier series. Since $u^{h} \in H^{2}(\Omega)$, we show $O(M^{-2})$ convergence in the $L^2$ norm.
\begin{theorem} \label{modes}
Assume that ${\varepsilon}$ satisfies the non-trapping conditions \eqref{eqn:trap}. Let $u^{h}$ be a solution to the continuous problem \eqref{eqn:problem2} and $u^{h,M}$ be the RCWA solution. Then there exists a constant $C_{5}(\kappa,{\varepsilon})>0$ such that, provided $M$ is large enough,
\[\normHj{e^{h,M}}{\Omega}\leq C_{5}(\kappa,{\varepsilon}) M^{s-2}\normL{\kappa^{2}(1-{\varepsilon}_{h})u^{i}}{\Omega}, \]
for $s=0,1$. Here, $e^{h,M}=u^{h}-u^{h,M}$ is the error from Fourier truncation. 
\end{theorem}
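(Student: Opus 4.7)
Since $u^h$ and $u^{h,M}$ satisfy \eqref{eqn:problem2} and \eqref{eqn:problem3} with the same sesquilinear form and right-hand side, this is a pure Galerkin situation and Galerkin orthogonality
\[
b_{{\varepsilon}_h}(e^{h,M},v_M)=0\qquad\forall\,v_M\in V_M
\]
is immediate. The plan is to first prove a quasi-optimal $H^1$ bound via a Schatz-type discrete inf-sup argument, then sharpen it to an $L^2$ estimate by Aubin--Nitsche duality against the adjoint problem \eqref{adjoint}. The approximation engine is the anisotropic Fourier truncation bound: for $w=\sum_n w_n(x_2)\exp(i\alpha_n x_1)$ in $H^2(\Omega)$, Parseval in $x_1$ together with $\alpha_n\sim n$ gives $\normHj{w-\mathcal{F}_M w}{\Omega}\leq C M^{s-2}\normHtwo{w}{\Omega}$ for $s=0,1$, since every mode with $|n|>M$ provides a factor of at least $(\alpha_n/M)^{2-s}$ in the denominator.

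For the primal solution $u^h$, the same extension--cutoff construction that underlies Corollary \ref{cor1} (applied to $u^h$ rather than to $z_f$) yields $u^h\in H^2(\Omega)$ with $\normHtwo{u^h}{\Omega}\leq C(\kappa,{\varepsilon})\normL{f}{\Omega}$. Next I would establish the discrete inf-sup for $b_{{\varepsilon}_h}$ on $V_M$: by Theorem \ref{apriori} and Lemma \ref{Cbound} the continuous inf-sup is uniform in $h$, and a Schatz perturbation using the $O(M^{-1})$ Fourier approximation of the adjoint (whose $H^2$ norm is bounded via Corollary \ref{cor1}) transfers it to $V_M$ for all $M$ sufficiently large. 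Cea's lemma then delivers
\[
\normH{e^{h,M}}{\Omega}\leq C\normH{u^h-\mathcal{F}_M u^h}{\Omega}\leq C M^{-1}\normHtwo{u^h}{\Omega},
\]
which is the $s=1$ case. For the $s=0$ case I let $z\in V$ solve \eqref{adjoint} with datum $e^{h,M}$, apply Corollary \ref{cor1} to get $\normHtwo{z}{\Omega}\leq C_4\normL{e^{h,M}}{\Omega}$, take $\xi=e^{h,M}$ in \eqref{adjoint}, and subtract $\mathcal{F}_M z\in V_M$ via Galerkin orthogonality:
\[
\normLs{e^{h,M}}{\Omega}=-\overline{b_{{\varepsilon}_h}(e^{h,M},z-\mathcal{F}_M z)}\leq C\normH{e^{h,M}}{\Omega}\cdot M^{-1}\normHtwo{z}{\Omega}.
\]
Cancelling one factor of $\normL{e^{h,M}}{\Omega}$ and inserting the $H^1$ bound produces the $s=0$ estimate with the stated right-hand side $\normL{\kappa^{2}(1-{\varepsilon}_h)u^{i}}{\Omega}$.

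The hard part is the discrete inf-sup: because $b_{{\varepsilon}_h}$ is indefinite (the $-\kappa^{2}{\varepsilon}_h u\overline{v}$ contribution kills any hope of coercivity), one cannot reduce to a Lax--Milgram argument, and the Schatz perturbation only closes if the adjoint $H^2$ constant is \emph{independent of $h$}. This is precisely why Lemma \ref{Cbound}, Corollary \ref{cor1}, and the enlarged domain $\Omega^E$ are assembled earlier in the paper. The hypothesis that $M$ be large enough is intrinsic to this approach: it measures the size of the Fourier space needed to render the compact zeroth-order part of $b_{{\varepsilon}_h}$ negligible on the Galerkin subspace.
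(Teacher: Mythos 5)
Your proposal is correct and follows essentially the same route as the paper: Galerkin orthogonality, an Aubin--Nitsche duality argument against the adjoint problem \eqref{adjoint} whose $H^{2}$ bound comes from Corollary \ref{cor1} with $h$-uniform constants via Lemma \ref{Cbound}, and a Schatz/G\r{a}rding argument (requiring $M$ large) to obtain the quasi-optimal $H^{1}$ estimate before combining with the Fourier truncation bound $O(M^{-1})\normHtwo{u^{h}}{\Omega}$. Your phrasing of the Schatz step as a discrete inf-sup condition plus Cea's lemma is only a cosmetic repackaging of the paper's direct computation combining its estimates \eqref{Nitsche} and \eqref{Schatz}.
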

\begin{remark}
Since we have
\begin{equation*}
\normL{\kappa^{2}(1-\varepsilon_{h})u^{i}}{\Omega} \leq \normL{\kappa^{2}(1-\varepsilon)u^{i}}{\Omega}+\normL{\kappa^{2}(\varepsilon-\varepsilon_{h})u^{i}}{\Omega},
\end{equation*}
using the upcoming lemma \ref{eperror} we see that the right hand side is bounded independent of $h$.
\end{remark}
\begin{proof}
Since ${\varepsilon}_{h}$ satisfies the non-trapping conditions \eqref{eqn:trap}, we have that $e^{h,M}\in V$ exists.
We first consider the following associated adjoint problem: for $f\in L^{2}(\Omega)$, find a $z^{h}_{f}\in V$ such that 
\[\overline{b_{{\varepsilon}_{h}}(\xi,z^{h}_{f})}=-\int_{\Omega}f \overline{\xi} \]
for all $\xi \in V$. Since $u^{h}$ solves problem \eqref{eqn:problem2} and the RCWA solution solves   problem \eqref{eqn:problem3}, we have Galerkin orthogonality in the sense that
\[b_{{\varepsilon}_{h}}(e^{h,M},z_{M})=0 \]
for all $z_{M}\in V_{M}$. Thus by taking $\xi=e^{h,M}$ in the adjoint problem and using this Galerkin orthogonality, we get
\[\overline{b_{{\varepsilon}_{h}}(e^{h,M},z^{h}_{f}-z_{M})}=-\int_{\Omega}f \overline{e^{h,M}} \] 
for all $z_{M}\in V_{M}$. Using the boundedness of the sesquilinear form $b_{{\varepsilon}_{h}}(u,v)$  and taking the infimum over all $z_{M}\in V_{M}$, we have
\begin{equation}\normL{e^{h,M}}{\Omega}\leq C_{5}\normH{e^{h,M}}{\Omega}\sup_{f\in L^{2}(\Omega)}\bigg[\frac{1}{\normL{f}{\Omega}}\inf_{z_{M}\in V_{M}}\normH{z^{h}_{f}-z_{M}}{\Omega} \bigg],\label{eqn:supinf} \end{equation}
where $C_{5}$ is the boundedness constant from $b_{{\varepsilon}_{h}}(u,v)$.
It now follows from Corollary \ref{cor1} and the standard approximation properties of Fourier series that 
\begin{align*}
  \inf_{z_{M}\in V_{M}}\normH{z^{h}_{f}-z_{M}}{\Omega}&\leq \normH{z^{h}_{f}-\mathcal{F}_{M}z^{h}_{f}}{\Omega} \\
  &\leq M^{-1}\normHtwo{z^{h}_{f}}{\Omega}\\
  &\leq C_{4}(\kappa,{\varepsilon})M^{-1}\normL{f}{\Omega}.
\end{align*}
From \eqref{eqn:supinf}, we have that 
\begin{equation} \label{Nitsche}
\normL{e^{h,M}}{\Omega}\leq C_{5} C_{4}(\kappa,{\varepsilon})M^{-1}\normH{e^{h,M}}{\Omega}. \end{equation}
We recall the sign of the real parts of the D-T-N terms, and note that for all $v\in H^{1}(\Omega),$
\[ \normHs{v}{\Omega}-(\kappa^{2}\Re({\varepsilon}_{h})+1)\normLs{v}{\Omega} \leq \Re b_{{\varepsilon}_{h}}(v,v). \]
The sesquilinear form $b_{{\varepsilon}_{h}}(u,v)$ satisfies a G\r{a}rding inequality \cite{crystals}, namely,
 \[\normHs{v}{\Omega}-C_{6}\normLs{v}{\Omega} \leq |b_{{\varepsilon}_{h}}(v,v)|,  \]
 for all $v\in H^{1}(\Omega)$, where $C_{6}=\kappa^{2}\normLinf{\Re({\varepsilon})}{\Omega}+1.$
By an argument of Schatz \cite{Schatz}, we take $v=e^{h,M}$ in the G\r{a}rding inequality, apply the Galerkin orthogonality, and divide through by $\normH{e^{h,M}}{\Omega}$ to obtain
\begin{equation} \label{Schatz}
\normH{e^{h,M}}{\Omega}-C_{6}\normL{e^{h,M}}{\Omega}\leq C_{5}\normH{u^{h}}{\Omega}.\ \end{equation}
By taking $M\geq C_{6}^{2}C_{5}C(\kappa,{\varepsilon})$ and combining \eqref{Nitsche} and \eqref{Schatz}, there is a constant $C_{7}=C_{5}C_{6}/(C_{6}-1)>0$ independent of $M,h,u^{h}$, and $u^{h,M}$ such that 
\begin{equation} \label{Schatz2}
\normH{e^{h,M}}{\Omega}\leq C_{7}\normH{u^{h}}{\Omega}. \end{equation}
Again, the standard approximation properties of Fourier series yield $\normH{e^{h,M}}{\Omega}\leq M^{-1} \normHtwo{u^{h}}{\Omega}$. It follows by \eqref{Nitsche} that 
\[\normL{e^{h,M}}{\Omega}\leq C_{5}C_{4}(\kappa,{\varepsilon}) M^{-2}\normHtwo{u^{h}}{\Omega}. \]
To complete the proof, we note that $\normHtwo{u^{h}}{\Omega}$ is bounded in terms of the data independently of $h$, due to corollary \ref{cor1} and lemma \ref{Cbound}. 
 \qed
\end{proof}

\subsection{Convergence in Slice Thickness}  \label{sslice}
This section concerns the approximation theory of the RCWA  with respect to slice thickness. For the following lemmas, we first assume that ${\varepsilon}$ is piecewise constant in each of the $\overline{\Omega}_{k}$. The case where ${\varepsilon}$ is piecewise smooth is covered later.

\begin{lemma} \label{linear}
Suppose $g_{k}$ is piecewise linear and $\varepsilon$ is piecewise constant. Then there is a constant $C_{8}>0$ independent of $h$ such that 
\begin{equation}
    \sum_{j=1}^{S} \text{meas} \ \text{supp}_{S_{j}}|{\varepsilon}-{\varepsilon}_{h}|\leq C_{8}h,
\end{equation}
for all $h>0$.
\end{lemma}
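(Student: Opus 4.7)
The plan is to decompose the support slice by slice and exploit the piecewise linear structure of each $g_k$. Because ${\varepsilon}$ is piecewise constant with discontinuities only along the interfaces $\Gamma_k$, and ${\varepsilon}_h$ is constant in $x_2$ on each slice $S_j$ and equals ${\varepsilon}(\cdot,h_{j-1/2})$, the set where the two functions differ in $S_j$ consists of thin horizontal strips lying between the true graph $x_2=g_k(x_1)$ and the center line $x_2=h_{j-1/2}$. I would first verify that the vertical sections $W_{lk}$ contribute no two-dimensional measure: near a jump point $x_{lk}$, a direct check shows that ${\varepsilon}_h$ jumps from the same value to the same value at $x_1=x_{lk}$ in every slice the vertical segment crosses, because the sampling point $(x_1,h_{j-1/2})$ and the test point $(x_1,x_2)$ always end up on the same side of $g_k$ within the slice. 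Hence only the Lipschitz graph pieces of $g_k$ can produce disagreement.

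Fixing one interface $g_k$, I would let $I_j^k=\{x_1\in[0,L_x]:g_k(x_1)\in(h_{j-1},h_j)\}$ and bound the area of the strip in $S_j$ by $(h/2)|I_j^k|$, using $|g_k(x_1)-h_{j-1/2}|\leq \Delta h_j/2\leq h/2$ on $I_j^k$. On a linear segment of $g_k$ of nonzero slope, each $x_1$ has $g_k(x_1)$ in a unique slice, so $\sum_j |I_j^k|$ restricted to that segment recovers the length of the segment. Horizontal segments of $g_k$ are, by the standing slicing hypothesis, located at inter-slice boundaries $x_2=h_j$, so they contribute to at most two slices and yield a total of at most $h\cdot(\text{segment length})$. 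Summing over the finitely many linear pieces of $g_k$ gives a bound of order $hL_x$ per interface, and a final sum over the $I$ interfaces yields the claim with $C_8\lesssim IL_x$.

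The main point requiring care, rather than real difficulty, is the bookkeeping at jumps and horizontal sections: one must check that ${\varepsilon}_h$ reproduces each jump of ${\varepsilon}$ across a vertical $W_{lk}$ exactly, so that no extended strip of disagreement accumulates near an $x_{lk}$, and one must invoke the slice placement hypothesis on $\frac{d}{dx_1}g_k=0$ to prevent a flat section of $g_k$ from producing a disagreement strip of height $\Delta h_j$ over a long $x_1$ interval. Once those two features are handled, the telescoping identity $\sum_j |I_j^k|\leq L_x$ for the non-horizontal pieces closes the argument with no further estimates.
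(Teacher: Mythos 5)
Your argument is correct and is essentially the paper's own proof: the paper describes the per-slice disagreement regions as triangles with vertical side $h/2$ and observes that the sum of their horizontal sides is bounded independently of $h$, which is exactly your fiberwise bound of height at most $h/2$ over the sets $I_j^k$ together with $\sum_j |I_j^k|\leq L_x$ per interface (your explicit treatment of the vertical jump segments $W_{lk}$ and of the horizontal sections placed on inter-slice boundaries is care the paper leaves implicit). One small geometric correction: for $x_1\in I_j^k$ the set where $\varepsilon\neq\varepsilon_h$ lies between $g_k(x_1)$ and the slice boundary on the same side of the center line as $g_k(x_1)$, not between the graph and the center line, but its vertical extent is still at most $\Delta h_j/2\leq h/2$, so your estimate $(h/2)|I_j^k|$ and the resulting constant $C_8\lesssim I L_x$ are unaffected.
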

\begin{proof}
Let $\Gamma_{k}$ be a grating interface. Since $g_{k}$ is piecewise linear, for any $h>0$ the $\text{meas} \ \text{supp}_{S_{j}}|{\varepsilon}-{\varepsilon}_{h}|$ is the sum of areas of triangles. Assume that in each slice $S_{j}$, there are $2\mathscr{P}$ such triangles, where $\mathscr{P}\geq 1$ is the number of times $g_{k}$ is approximated in each $S_{j}$. Each triangle $T_{pj}$ has a horizontal side of length $t_{pj}$ such that 
\[\sum_{j=1}^{S}\sum_{p=1}^{2\mathscr{P}}t_{pj} \]
is constant for all $h>0$.
Each $t_{k,j}$ also has a vertical side with length $h/2$. Now,
\begin{align*}
\text{avg}_{pj}\text{meas} \,T_{pj}&=\frac{h}{4\mathscr{P}S} \sum_{j=1}^{S}\sum_{p=1}^{2\mathscr{P}}t_{pj}\\
&\leq\frac{L_{x}}{8H\mathscr{P}}h^{2}.
\end{align*}
Thus, the lemma follows because by definition
\begin{align*}
    \sum_{j=1}^{S} \text{meas} \ \text{supp}_{S_{j}}|{\varepsilon}-{\varepsilon}_{h}|&=S\sum_{k}\text{avg}_{pj}\text{meas} T_{k,j}\\
    &\leq\frac{I L_{x}C_{\Delta}}{4\mathscr{P}} h.
\end{align*}
\qed
\end{proof}
\begin{lemma} \label{C}
Suppose $g_{k}$ is in $C^{2}[0,L_{x}]$ and $\varepsilon$ is piecewise constant. Then there is a constant $C_{9}>0$ independent of $h$ such that 
\begin{equation}
    \sum_{j=1}^{S} \text{meas} \ \text{supp}_{S_{j}}|{\varepsilon}-{\varepsilon}_{h}|\leq C_{9}h,
\end{equation}
\end{lemma}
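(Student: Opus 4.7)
The plan is to reduce the $C^{2}$ case to the piecewise linear case of Lemma \ref{linear} by interpolating $g_{k}$ and then controlling the perturbation introduced by the interpolation using the $C^{2}$ regularity.

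The first step is to construct a continuous piecewise linear interpolant $\tilde{g}_{k}$ of $g_{k}$ on a partition of $[0,L_{x}]$ of spacing $O(h)$, refined so that its nodes include the finitely many critical points of $g_{k}$; by hypothesis, the slice boundaries align with the critical values of $g_{k}$, so the slice geometry respects this refined partition. Standard one-dimensional interpolation theory gives
\[\|g_{k}-\tilde{g}_{k}\|_{L^{\infty}([0,L_{x}])}\le \tfrac{1}{8}h^{2}\|g_{k}''\|_{L^{\infty}([0,L_{x}])}.\]
Because $\tilde{g}_{k}$ is piecewise linear with a bounded number of linear pieces (independent of $h$, once we absorb the interpolation nodes into the $\phi_{lk}$-structure of Section \ref{Var}), Lemma \ref{linear} applies and yields $\sum_{j} \operatorname{meas}\operatorname{supp}_{S_{j}}|\varepsilon-\tilde{\varepsilon}_{h}| \le C_{8}h$, where $\tilde{\varepsilon}_{h}$ is the stairstep approximation induced by the interface $\tilde{g}_{k}$.

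The second step is to control the discrepancy between $\varepsilon_{h}$ (induced by $g_{k}$) and $\tilde{\varepsilon}_{h}$ (induced by $\tilde{g}_{k}$). On each slice $S_{j}$, the two agree except at those $x_{1}$ where $g_{k}(x_{1})$ and $\tilde{g}_{k}(x_{1})$ straddle either the center line $x_{2}=h_{j-1/2}$ or the slice boundaries $x_{2}=h_{j-1},h_{j}$. The area contribution of this discrepancy region on $S_{j}$ is bounded by $\int_{0}^{L_{x}}|g_{k}-\tilde{g}_{k}|\,dx_{1}\le L_{x}\|g_{k}-\tilde{g}_{k}\|_{L^{\infty}}=O(h^{2})$, and this single $O(h^{2})$ bound covers the total symmetric-difference area summed over all slices. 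By the triangle inequality for measures of set differences, $\sum_{j}\operatorname{meas}\operatorname{supp}_{S_{j}}|\varepsilon-\varepsilon_{h}| \le C_{8}h + O(h^{2}) \le C_{9}h$. Summing over the finite number $I$ of interfaces completes the proof.

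The main obstacle is the bookkeeping near tangencies of $g_{k}$ to horizontal slice boundaries. Near a critical point of $g_{k}$, the curve re-enters a slice quadratically, so both $g_{k}$ and $\tilde{g}_{k}$ may generate additional crossings of a center line that complicate the stairstep geometry. The hypothesis that slice boundaries are placed at critical values of $g_{k}$ ensures that these tangencies occur at the top or bottom of a slice, not at its center line, so that $\tilde{g}_{k}$ and $g_{k}$ have the same crossing topology with respect to the center lines away from an $O(h^{2})$-measure exceptional set. As a sanity check, note that the even simpler trivial estimate $|g_{k}(x_{1})-h_{j-1/2}|\le h/2$ on $\{x_{1}:g_{k}(x_{1})\in (h_{j-1},h_{j})\}$, together with $\sum_{j}|\{x_{1}:g_{k}(x_{1})\in(h_{j-1},h_{j})\}|\le L_{x}$, already gives the bound $L_{x}h/2$ per interface directly, confirming the $O(h)$ rate.
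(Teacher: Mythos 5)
Your overall plan (interpolate $g_{k}$ by a piecewise linear curve, invoke Lemma \ref{linear}, control the perturbation) is the same as the paper's, but two steps have genuine gaps. First, Lemma \ref{linear} compares the permittivity $\tilde{\varepsilon}$ \emph{induced by the piecewise linear interface} with its own stairstep $\tilde{\varepsilon}_{h}$; it does not directly bound $\sum_{j}\mathrm{meas}\,\mathrm{supp}_{S_{j}}|\varepsilon-\tilde{\varepsilon}_{h}|$ with the true $\varepsilon$. You need to insert $\tilde{\varepsilon}$ and add the term $\mathrm{meas}\,\mathrm{supp}|\varepsilon-\tilde{\varepsilon}|$, which is the area between the curves $g_{k}$ and $\tilde{g}_{k}$ and is legitimately bounded by $\int_{0}^{L_{x}}|g_{k}-\tilde{g}_{k}|\,dx_{1}=O(h^{2})$ --- that is the place where that integral belongs. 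Second, and more seriously, your bound on the discrepancy between $\varepsilon_{h}$ and $\tilde{\varepsilon}_{h}$ by $\int_{0}^{L_{x}}|g_{k}-\tilde{g}_{k}|\,dx_{1}$ is false: wherever $g_{k}$ and $\tilde{g}_{k}$ straddle a center line $x_{2}=h_{j-1/2}$, the two stairsteps disagree on the \emph{entire} slice column of height $\approx h$, so the discrepancy area is $h\times(\text{length of the straddle set})$, which exceeds $\int|g_{k}-\tilde{g}_{k}|$ (of size $\approx h^{2}\times$ that length) by a factor of order $h^{-1}$. The conclusion can be rescued --- since $\lVert g_{k}-\tilde{g}_{k}\rVert_{\infty}=O(h^{2})$ is much smaller than the center-line spacing $\geq h/(2C_{\Delta})$, each $x_{1}$ can straddle at most one center line, so the total discrepancy is at most $L_{x}h$ --- but that argument is not in your write-up, and your claimed $O(h^{2})$ for this step is not justified (near the crossings the slope of $g_{k}$ may degenerate as $h\to0$). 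The paper avoids the issue entirely by choosing the interpolation nodes to be the points where $g_{k}$ meets the inter-slice boundaries and the center lines, so that the stairstep induced by the interpolant $g_{k}^{*}$ is \emph{identical} to $\varepsilon_{h}$; then only the between-curves area remains, estimated via the interpolation error and the rectifiability of $g_{k}$.

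Two further remarks. Your parenthetical claim that $\tilde{g}_{k}$ has a bounded number of linear pieces independent of $h$ is not right (the number grows like $L_{x}/h$); what matters, and what you should say, is that the constant produced by the Lemma-\ref{linear}-type argument for the $h$-dependent interface $\tilde{g}_{k}$ remains uniform in $h$. Finally, your closing ``sanity check'' is in fact essentially a complete and simpler proof of the lemma: for piecewise constant $\varepsilon$, $\mathrm{meas}\,\mathrm{supp}_{S_{j}}|\varepsilon-\varepsilon_{h}|\leq \Delta h_{j}\,\bigl|\{x_{1}:g_{k}(x_{1})\in(h_{j-1},h_{j})\}\bigr|$, and these $x_{1}$-sets are disjoint over $j$, giving the bound $I\,L_{x}\,h$ without any use of $C^{2}$ regularity or of Lemma \ref{linear}; promoting that observation to the main argument would be cleaner than either route.
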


\begin{proof}
First, we interpolate $g_{k}$ on the inter-slice boundaries, and also on the center line as we described before. Thus, we construct a piecewise linear approximation to $g_{k}$, say $g_{k}^{*}$. Let the relative permittivity associated with $g_{k}^{*}$ in all $\Omega_{k}$ be called ${\varepsilon}^{*}$. Now we simply use the previous result, by noticing
\begin{equation*}
  \sum_{j=1}^{S} \text{meas} \ \text{supp}_{S_{j}}|{\varepsilon}-{\varepsilon}_{h}|\leq \sum_{j=1}^{S} \text{meas} \ \text{supp}_{S_{j}}|{\varepsilon}-{\varepsilon}^{*}|+C_{8}h.
\end{equation*}
Since $g_{k} \in C^{2}[0,L_{x}]$,   standard approximation theory yields 
\[|g_{k}-g_{k}^{*}|\leq \mathscr{L} h^{2} \max_{k}\max_{0\leq x_{1} \leq L_{x}}\big|\frac{d^{2}}{dx_{1}^{2}}g_{k}\big|\]
 for some $\mathscr{L}>0$ independent of h. Since the $g_{k}$ are rectifiable, 
\[\sum_{j=1}^{S} \text{meas} \ \text{supp}_{S_{j}}|{\varepsilon}-{\varepsilon}^{*}|\leq \bigg(4\mathscr{P}\mathscr{L}HC_{\Delta}I \max_{k} A(g_{k}) \max_{k}\max_{0\leq x_{1} \leq L_{x}}\big|\frac{d^{2}}{dx_{1}^{2}}g_{k}\big| \bigg)h, \]
where $A(g_{k})$ is the arclength of $g_{k}$ This inequality holds because the arclength is an upper bound on the sum of the length of $g^{*}_{k}$, and the right hand side of the inequality is the area of an approximating rectangle.
\qed
\end{proof}
\begin{lemma} \label{eperror}
Suppose ${\varepsilon} \in C^{(1,1)}(\overline{\Omega}_{k})$ for each $k$, and $g_{k}$ is piecewise $C^{2}$ on $[0,L_{x}].$ Then there is a constant $C_{10}>0$ independent of $h$ such that 
\[\normLp{\varepsilon-\varepsilon_{h}}{\Omega}{q}\leq C_{10}h^{1/q}, \]
for all $1\leq q <\infty$, and $h$ small enough.
\end{lemma}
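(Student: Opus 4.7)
The plan is to estimate $\normLp{\varepsilon-\varepsilon_h}{\Omega}{q}^q = \int_{\Omega}|\varepsilon-\varepsilon_h|^q$ by splitting $\Omega$ into a small-measure ``bad'' region, where the stairstep approximation has crossed an interface, and a large-measure ``good'' region, where it has not. On the bad region the integrand is $O(1)$ but the measure is $O(h)$; on the good region the measure is $O(1)$ but the integrand is $O(h)$. Both contributions combine to give $O(h)$ for the $q$th power of the norm, from which the claim follows by taking the $q$th root.

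Concretely, for each slice $S_j$ I would define the crossing set $B_j\subset S_j$ consisting of those $(x_1,x_2)\in S_j$ such that $(x_1,x_2)$ and the sampling point $(x_1,h_{j-\tfrac12})$ lie in different subdomains $\Omega_k$. Set $B=\bigcup_j B_j$. On the complement $G=\Omega\setminus B$, for each $(x_1,x_2)\in G$ the segment joining $(x_1,x_2)$ and $(x_1,h_{j-\tfrac12})$ stays in a single $\overline{\Omega}_k$; the $C^{(1,1)}$ regularity of $\restr{\varepsilon}{\overline{\Omega}_k}$ and $|x_2-h_{j-\tfrac12}|\le h$ then give the pointwise estimate
\[
|\varepsilon(x_1,x_2)-\varepsilon_h(x_1,x_2)|\le \normLinf{\partial_{x_2}\varepsilon}{\Omega}\,h \;=:\; K_1 h.
\]
On $B$ I use only the crude bound $|\varepsilon-\varepsilon_h|\le 2\normLinf{\varepsilon}{\Omega}$.

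The key geometric input is that $\mathrm{meas}(B)\le Ch$. This is essentially the content of Lemmas~\ref{linear} and~\ref{C}: their proofs count the measure of the region between each true interface $g_k$ and its stairstep approximation, which is exactly $B$, independently of what $\varepsilon$ actually is on either side. So applying those lemmas (or rather the purely geometric estimate inside their proofs, slice by slice, using that $g_k$ is piecewise $C^2$) yields $\mathrm{meas}(B)\le C_{9}h$.

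Putting the two estimates together, for any $1\le q<\infty$,
\[
\int_\Omega |\varepsilon-\varepsilon_h|^q \le \bigl(2\normLinf{\varepsilon}{\Omega}\bigr)^q \mathrm{meas}(B) + (K_1 h)^q\,\mathrm{meas}(G) \le C h + C'h^q.
\]
For $h\le 1$ and $q\ge 1$ we have $h^q\le h$, so the right-hand side is bounded by $C''h$; taking $q$th roots gives the desired $C_{10}h^{1/q}$ bound. The main obstacle I expect is simply being careful that the geometric counting argument of Lemmas~\ref{linear}--\ref{C} is stated in a way I can reuse here even though $\varepsilon$ is no longer piecewise constant; one may need a short remark observing that the ``area of difference'' triangles/rectangles used there depend only on $g_k$ and the slicing, not on the values of $\varepsilon$, so the measure estimate on $B$ transfers without change.
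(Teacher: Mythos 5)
Your proposal is correct and follows essentially the same route as the paper: split each slice into the region where the vertical segment to the sampling line crosses an interface (measure $O(h)$ by the geometric counting of Lemmas~\ref{linear}--\ref{C}, integrand bounded by $2\normLinf{\varepsilon}{\Omega}$) and the complementary region where the piecewise $C^{(1,1)}$ regularity gives a pointwise $O(h)$ bound via Taylor expansion about $x_{2}=h_{j-\frac{1}{2}}$, then combine $h+h^{q}\lesssim h$ for small $h$ and take $q$th roots. Your explicit remark that the measure estimate in those lemmas is purely geometric (depending only on $g_{k}$ and the slicing, not on the piecewise-constancy of $\varepsilon$) is a point the paper uses implicitly without comment.
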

\begin{proof}
To complete the proof of convergence in $h$, we split each slice into regions where ${\varepsilon}$ has jumps, and regions where ${\varepsilon}$ is $C^{(1,1)}$. Since we assume that any interface intersects a slice at most $\mathscr{P}$ times, this naturally separates each slice into $2\mathscr{P}+1$ regions. A visualization of a slice decomposed into the $\mathscr{P}$ regions $\mathscr{S}_{pj}$ where ${\varepsilon}$ has jumps, and the $\mathscr{P}+1$ regions $\mathscr{I}_{pj}$ where ${\varepsilon}$ is smooth is given in Figure \ref{fig:decompose}.

For each $j$, we note that we can approximate ${\varepsilon}-{\varepsilon}_{h}$ in the regions $\mathscr{I}_{pj}$ by Taylor expanding about $x_{2}=h_{j-\frac{1}{2}}$, and obtaining an $\xi_{pj}$ such that \begin{align*}
{\varepsilon}-{\varepsilon}_{h}&=\frac{\partial {\varepsilon}}{\partial x_{2}}(\xi_{pj})(x_{2}-h_{j-\frac{1}{2}})\\
&\leq \normWoneinf{{\varepsilon}}{\Omega}h.
\end{align*}
Using the previous lemmas, we can see that 
\begin{align*}
\normLp{\varepsilon-\varepsilon_{h}}{\Omega}{q}^{q}&=\sum_{j=1}^{S}\sum_{p=1}^{2\mathscr{P}+1}\bigg(\normLp{\varepsilon-\varepsilon_{h}}{\mathscr{S}_{pj}}{q}^{q}+
\normLp{\varepsilon-\varepsilon_{h}}{\mathscr{I}_{pj}}{q}^{q}\bigg)\\
&\leq (2\mathscr{P}+1)(2\normLinf{\varepsilon}{\Omega})^{q}C_{9}h+\sum_{j=1}^{S}\sum_{p=1}^{2\mathscr{P}+1}\int_{\mathscr{I}_{pj}}|\varepsilon-\varepsilon_{h}|^{q}\\
&\leq (2\mathscr{P}+1)\bigg((2\normLinf{\varepsilon}{\Omega})^{q}C_{9}+2HC_{\Delta}L_{x}\normWoneinf{{\varepsilon}}{\Omega}^{q} \bigg) (h+h^{q}).
\end{align*}
\qed
\end{proof}
\begin{figure}[h] 
	\centering
	\includegraphics[width=4in]{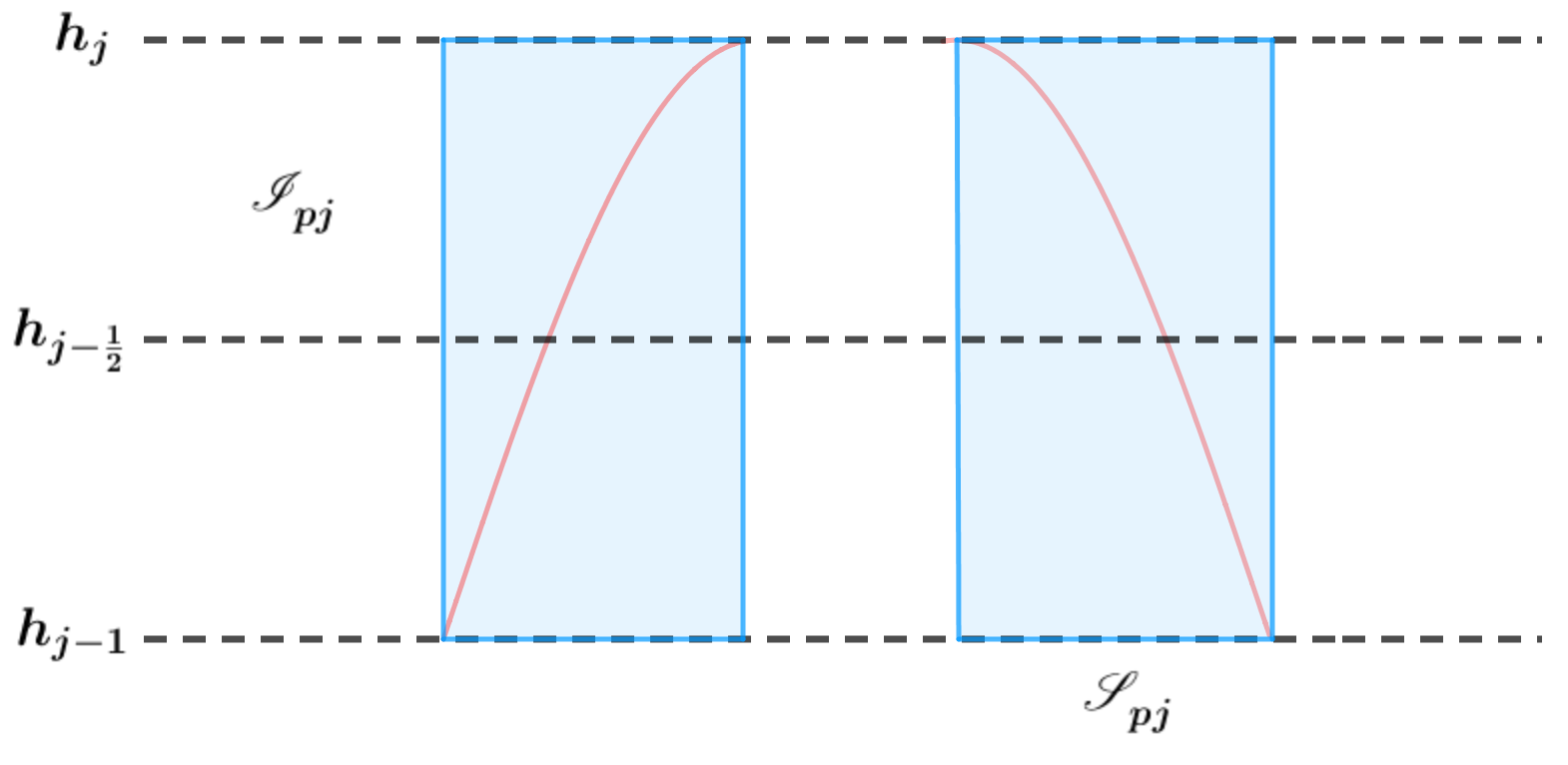}
	\caption{A single slice decomposed into five separate regions. The shaded regions are where ${\varepsilon}$ has jumps in the slice, and the white regions are where ${\varepsilon}$ is smooth.}
	\label{fig:decompose}
\end{figure}
\begin{remark}
If $\varepsilon \in C^{1,1}(\overline{\Omega})$, then for some constant $C>0$ independent of $h$, it holds that $\normLp{\varepsilon-\varepsilon_{h}}{\Omega}{q} \leq C h.$
\end{remark}
\begin{theorem} \label{slice}
Assume that ${\varepsilon}$ satisfies the non-trapping conditions \eqref{eqn:trap}. Let $u^{h}$ be a solution to the variational problem \eqref{eqn:problem2} with the stairstep approximation ${\varepsilon}_{h}$ and $u$ be the solution to \eqref{eqn:problem1} with the true ${\varepsilon}$. Assume also that the grating satisfies the conditions of any of the previous lemmas. Then there exists an explicit constant $C_{12}>0$ independent of $h$ such that
\[
\normH{e^{h}}{\Omega}\leq C_{12} h^{1/2}\normL{f}{\Omega},
\]
where $e^{h}=u-u^{h}$.
\end{theorem}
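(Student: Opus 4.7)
The plan is to set up an error equation for $e^h = u - u^h$, in which the coefficient perturbation $\varepsilon - \varepsilon_h$ enters as a source, and then to apply the inf-sup (a-priori) estimate for $b_{\varepsilon_h}$, which is uniform in $h$ thanks to Lemma \ref{Cbound}.

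\textbf{Step 1: Error equation.} From \eqref{eqn:problem1} and \eqref{eqn:problem2} I subtract to obtain, for every $v \in V$,
\begin{equation*}
b_{\varepsilon_h}(e^h, v) = b_{\varepsilon_h}(u, v) - b_\varepsilon(u, v) = \kappa^2 \int_\Omega (\varepsilon - \varepsilon_h)\, u\, \overline{v}.
\end{equation*}
Thus $e^h$ solves a problem with perturbed coefficient $\varepsilon_h$ and right-hand side built from the localized function $(\varepsilon - \varepsilon_h)u$.

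\textbf{Step 2: Bounding the perturbation.} The key observation is that $\varepsilon - \varepsilon_h$ is uniformly bounded in $L^\infty(\Omega)$ but supported on a set of two-dimensional measure $O(h)$ (Lemmas \ref{linear}, \ref{C}, and in particular Lemma \ref{eperror} with $q = 2$ yields $\|\varepsilon - \varepsilon_h\|_{L^2(\Omega)} \leq C_{10} h^{1/2}$). Combining this with a Cauchy--Schwarz applied to the product $u\bar v$ and the Sobolev embedding $H^1(\Omega) \hookrightarrow L^4(\Omega)$ valid in 2D, I estimate
\begin{equation*}
\bigl| \kappa^2 \textstyle\int_\Omega (\varepsilon-\varepsilon_h) u \overline{v} \bigr|
\leq \kappa^2 \|\varepsilon - \varepsilon_h\|_{L^2(\Omega)} \|u\|_{L^4(\Omega)} \|v\|_{L^4(\Omega)}
\leq C \kappa^2 h^{1/2} \|u\|_{H^1(\Omega)} \|v\|_{H^1(\Omega)}.
\end{equation*}
This is what buys the $h^{1/2}$ rate: splitting the support between the trial factor $u$ and the test factor $v$ (rather than dumping all of it onto one factor) is what turns the crude $h^{1/4}$ estimate into the desired $h^{1/2}$.

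\textbf{Step 3: Inf-sup and conclusion.} Since $\varepsilon_h$ satisfies the same non-trapping conditions as $\varepsilon$ (Lemma \ref{Cbound}), Theorem \ref{apriori} applies to $b_{\varepsilon_h}$ with a continuity constant bounded by $C(\kappa,\varepsilon)$, independent of $h$; this yields an inf-sup constant $\gamma > 0$ for $b_{\varepsilon_h}$ on $V$ that is likewise uniform in $h$. Taking the supremum over $v \in V \setminus \{0\}$ in Step 1 and using the bound of Step 2 gives
\begin{equation*}
\|e^h\|_{H^1(\Omega)} \;\leq\; \frac{1}{\gamma} \sup_{v \in V \setminus \{0\}} \frac{|b_{\varepsilon_h}(e^h, v)|}{\|v\|_{H^1(\Omega)}} \;\leq\; C h^{1/2} \|u\|_{H^1(\Omega)}.
\end{equation*}
A final application of Theorem \ref{apriori} to the unperturbed problem \eqref{eqn:problem1} gives $\|u\|_{H^1(\Omega)} \leq C(\kappa,\varepsilon)\|f\|_{L^2(\Omega)}$, producing the claimed estimate with an explicit constant $C_{12}$.

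\textbf{Main obstacle.} The conceptual work is already done: the existence of a uniform-in-$h$ a-priori estimate (Theorem \ref{apriori} plus Lemma \ref{Cbound}) and the measure-of-support bound for $\varepsilon - \varepsilon_h$ (Lemma \ref{eperror}) are the two pillars. The only delicate point is achieving the optimal $h^{1/2}$ rate rather than $h^{1/4}$; this requires using the $L^2$-in-$\varepsilon$ bound together with the 2D embedding $H^1 \hookrightarrow L^4$ applied symmetrically to both $u$ and the test function $v$, rather than a naive Hölder estimate that wastes half of the support measure.
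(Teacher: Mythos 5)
Your proof is correct, but it reaches the $h^{1/2}$ rate by a different route than the paper. The paper works with the strong form of the error equation, $(\Delta+\kappa^{2}\varepsilon_{h})e^{h}=\kappa^{2}u(\varepsilon_{h}-\varepsilon)$, applies the $L^{2}$-data \textit{a-priori} estimate of Theorem \ref{apriori} (with $C(\kappa,\varepsilon_{h})\leq C(\kappa,\varepsilon)$ by Lemma \ref{Cbound}) directly to this source, and bounds $\normL{u(\varepsilon_{h}-\varepsilon)}{\Omega}\leq \normLinf{u}{\Omega}\normL{\varepsilon-\varepsilon_{h}}{\Omega}$, where $\normLinf{u}{\Omega}$ is controlled through the $H^{2}$ regularity result (as in Corollary \ref{cor1}) and the Sobolev embedding $H^{2}\hookrightarrow L^{\infty}$, and $\normL{\varepsilon-\varepsilon_{h}}{\Omega}\leq C_{10}h^{1/2}$ by Lemma \ref{eperror} with $q=2$. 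You instead stay at the variational level and split the perturbation term symmetrically, $\normL{\varepsilon-\varepsilon_{h}}{\Omega}\,\normLp{u}{\Omega}{4}\,\normLp{v}{\Omega}{4}$ with $H^{1}\hookrightarrow L^{4}$, so you only need $H^{1}$ control of $u$ and avoid the $H^{2}/L^{\infty}$ machinery; the price is that you must invoke an inf-sup constant for $b_{\varepsilon_{h}}$ over $V$ in the $H^{1}$ norm that is uniform in $h$. That is the one point you assert rather than prove: the estimate of Theorem \ref{apriori} is for $L^{2}$ right-hand sides, and passing to stability for general functionals in $V^{*}$ (equivalently, a quantitative inf-sup constant) needs an extra duality step -- bound $\normL{e^{h}}{\Omega}$ by testing the adjoint problem with $L^{2}$ data (whose constant is again $\leq C(\kappa,\varepsilon)$ by Lemma \ref{Cbound}) and then absorb via the G\r{a}rding inequality with $C_{6}=\kappa^{2}\normLinf{\Re(\varepsilon)}{\Omega}+1$, which is also $h$-independent. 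Since the paper itself asserts exactly this implication (a-priori estimate $\Rightarrow$ inf-sup) in the proof of Theorem \ref{apriori}, your argument is sound at the paper's own level of rigor, but you should spell out that duality step if you want the $h$-uniformity of $\gamma$ to be explicit; alternatively, note that your right-hand side is induced by the $L^{2}$ function $\kappa^{2}(\varepsilon-\varepsilon_{h})u$, so the paper's shortcut is available whenever one is willing to pay for $\normLinf{u}{\Omega}$.
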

\begin{proof}
It follows from the \textit{a-priori} estimate that the two solutions $u$ and $u^{h}$ exist and are unique, and $e^{h}\in H^{1}(\Omega)$. Since $u$ and $u^{h}$ solve \eqref{eqn:PDE} with ${\varepsilon}$ and ${\varepsilon}_{h}$ respectively, we subtract the two equations to obtain
\[(\Delta+\kappa^{2}{\varepsilon}_{h})e^{h}=\kappa^{2}u({\varepsilon}_{h}-{\varepsilon}). \]
By the \textit{a-priori}  estimate there is an explicit constant $C(\kappa,{\varepsilon}_{h})>0$ depending on $\kappa$ and ${\varepsilon}_{h}$ such that
\begin{align} 
\label{eqn:happrox2}
\normH{e_{h}}{\Omega}&\leq \kappa^{2}C(\kappa,{\varepsilon}_{h}) \normLinf{u}{\Omega} \normL{{\varepsilon}_{h}-{\varepsilon}}{\Omega}\\ \nonumber
&\leq \kappa^{2}C(\kappa,{\varepsilon})\normLinf{u}{\Omega}C_{11}h^{1/2},\\ \nonumber
\end{align}
where we have used the previous lemma.We recall that $\normLinf{u}{\Omega} \leq C\normHtwo{u}{\Omega}\leq C(k,{\varepsilon}) \normL{f}{\Omega}$, by the Sobolev embedding theorem and the \textit{a-priori} estimate. \qed
\end{proof}
Combining Theorem \ref{modes} and \ref{slice} we have the following corollary.
\begin{corollary}
Under the conditions of Theorem \ref{modes} and \ref{slice}, there is a constant $C_{12}>0$ such that
\[\normHj{u-u^{h,M}}{\Omega}\leq C_{12}(h^{1/2}+M^{s-2}) \]
for $s=0,1$.
\end{corollary}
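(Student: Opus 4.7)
The plan is a direct triangle-inequality decomposition: split the total error into a slice-approximation part and a Fourier-truncation part, then apply the two convergence theorems already proved. Concretely, I would write
\[
u - u^{h,M} = (u - u^{h}) + (u^{h} - u^{h,M}) = e^{h} + e^{h,M},
\]
and use the triangle inequality in the $H^{s}(\Omega)$ norm to get
\[
\normHj{u - u^{h,M}}{\Omega} \leq \normHj{e^{h}}{\Omega} + \normHj{e^{h,M}}{\Omega}.
\]

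For the first term, Theorem \ref{slice} gives the $H^{1}$ bound $\normH{e^{h}}{\Omega} \leq C_{12} h^{1/2} \normL{f}{\Omega}$, and because $\normL{\cdot}{\Omega} \leq \normH{\cdot}{\Omega}$, the same bound controls the $L^{2}$ norm. Thus the slice-approximation error is of order $h^{1/2}$ in both the $s=0$ and $s=1$ cases. For the second term, Theorem \ref{modes} directly yields
\[
\normHj{e^{h,M}}{\Omega}\leq C_{5}(\kappa,{\varepsilon})\, M^{s-2}\,\normL{\kappa^{2}(1-{\varepsilon}_{h})u^{i}}{\Omega}
\]
for $s=0,1$, provided $M$ is large enough as in that theorem's hypothesis. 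The remark following Theorem \ref{modes} together with Lemma \ref{eperror} shows that $\normL{\kappa^{2}(1-{\varepsilon}_{h})u^{i}}{\Omega}$ is bounded uniformly in $h$, so this contribution is genuinely $O(M^{s-2})$ with a constant independent of $h$.

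Adding the two bounds and absorbing the various constants $C(\kappa,{\varepsilon})$, $C_{4}$, $C_{5}$, $C_{12}$, and the $h$-uniform bound on the data into a single new constant (still called $C_{12}$, as in the statement) gives the claimed estimate. There is no real obstacle here: the content of the corollary lies entirely in Theorems \ref{modes} and \ref{slice}, and the only points requiring a brief comment are (i) the $h$-independence of the Fourier-truncation constant, which relies on Lemma \ref{Cbound} (so $C(\kappa,{\varepsilon}_{h})$ is controlled by $C(\kappa,{\varepsilon})$), and (ii) the mild hypothesis $M \geq C_{6}^{2}C_{5}C(\kappa,{\varepsilon})$ carried over from Theorem \ref{modes}, under which the combined rate $h^{1/2}+M^{s-2}$ holds.
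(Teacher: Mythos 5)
Your proposal is correct and is exactly the argument the paper intends: the corollary is stated as an immediate consequence of combining Theorems \ref{modes} and \ref{slice} via the decomposition $u-u^{h,M}=e^{h}+e^{h,M}$ and the triangle inequality, with the constants absorbed as you describe. Your added remarks on the $h$-uniformity of the Fourier-truncation bound (via Lemma \ref{Cbound} and Lemma \ref{eperror}) and on using $\normL{\cdot}{\Omega}\leq\normH{\cdot}{\Omega}$ for the $s=0$ case are consistent with the paper and fill in details it leaves implicit.
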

\section{Numerical Examples} \label{Numerical}
In this section we test Theorems \ref{modes} and \ref{slice} numerically by comparing the RCWA solution to a highly refined FEM solution. In order to avoid possible convergence enhancements due to symmetry, we study a non-symmetric grating profile. The example is shown in Figure \ref{illustration}a. We also show results for a symmetric grating, but the grating is taller to determine if the grating height effects the convergence with respect to the slice thickness $h$. In both of our examples, the relative permittivity of the fictitious metallic material is given as ${\varepsilon}_{m}=-15+4i$, while the relative permittivity of air is ${\varepsilon}_{a}=1$. The thickness of the air layer is $1500$ nm and the period $L_{x}=500$~nm
along the $x_1$ direction. In the first example, the non-symmetric grating of maximum height $50$~nm
is backed by a $50$-nm-thick metallic layer beneath it. The symmetric  grating has a maximum height of $100$ nm. A plane wave in both examples  is normally incident (i.e., $\theta=0$) and the free-space wavelength $\lambda_{0}=2\pi/\kappa=600$~nm. 

\begin{figure}%
    \centering
    \subfloat[]{{\includegraphics[width=5cm]{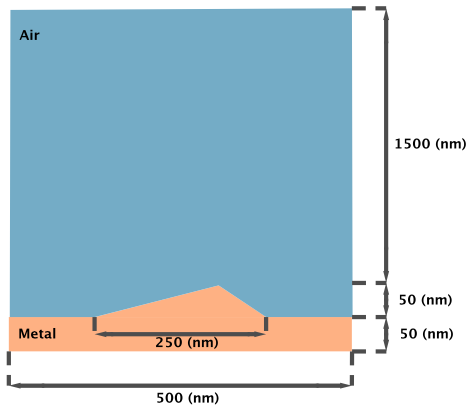} }}%
    \qquad
    \subfloat[]{{\includegraphics[width=6cm]{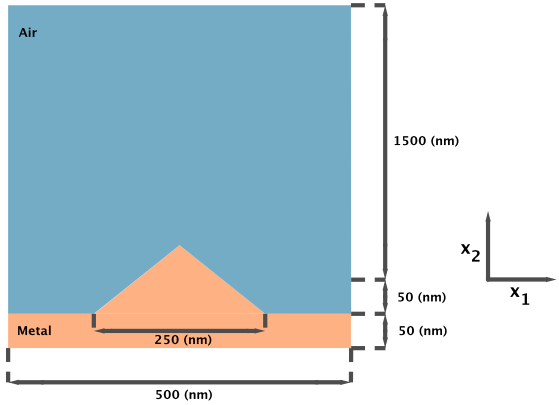} }}%
    \caption{(a) Non-symmetric grating of maximum height 50~nm.
 The peak of the grating is off center to the right by $62.5$ nm. (b) Symmetric grating
 of maximum height  $100$~nm.}%
    \label{illustration}%
\end{figure}

Since the true solution to these problems cannot be computed analytically, we compare the RCWA solution to a highly refined FEM solution. The FEM solution $u_{\text{FE}}$ in each example was computed using an adaptive method implemented in NGSolve \cite{NGSolve}. The simulated domain is sandwiched between two perfectly matched layers (PMLs). Both of the PMLs are one wavelength thick and have a constant PML parameter of $1.5 + 2.5i$ \cite{Chen}. This gives a reflection coefficient of $3 \times 10^{-12}$. The FEM solution was computed using 5th-order continuous finite elements. The adaptive algorithm uses mesh bisection and the Zienkiewicz--Zhu \textit{a-posteriori} error estimator \cite{ZZ}. Mesh adaptivity terminates when the algorithm reaches 100,000 degrees of freedom. We define the relative $L^{2}$ error between an RCWA solution and the FEM solution to be 
$$\frac{\normL{u^{h,M}-u_{\text{FE}}}{\Omega}}{\normL{u_{\text{FE}}}{\Omega}}. $$

Figures \ref{fig:fig_a} and \ref{fig:fig_b} show the convergence of the non-symmetric example with respect to $M$ and $h$, respectively. Figures \ref{fig:fig_c} and \ref{fig:fig_d} show the convergence of the symmetric example, similarly in Figs. \ref{fig:fig_b}  and  \ref{fig:fig_d}, the number of retained Fourier modes was fixed as $2M+1 =101$. Slice thickness $h$ was allowed to change, where $h\in \{1/2,1,1.25,2,5,10,25,50 \}$~nm. In Figs. \ref{fig:fig_a}  and  \ref{fig:fig_c}, the slice thickness $h=1$~nm was fixed but the number $2M+1$ of retained Fourier modes was allowed to change with $M=1,2,\cdots,50$.

We see that the rate of convergence is $O(h^{1.7})$ for the symmetric grating, and $O(h^{1.56})$ for the non-symmetric grating. In general, we can only prove at least $O(h^{1/2})$ in Theorem \ref{slice}, so in some cases the convergence due to stairstepping error is better than predicted.
The rate of convergence for the number of retained Fourier modes is   $O(M^{-2})$ for both examples.

For results in a complicated grating motivated by solar cell applications see \cite{Tom}. Convergence in $h$ was not considered, but $O(M^{-2})$ convergence is seen. 
\begin{figure}[H]
\begin{minipage}{.5\linewidth}
\centering
\subfloat[]{\label{fig:fig_a}\includegraphics[scale=.35]{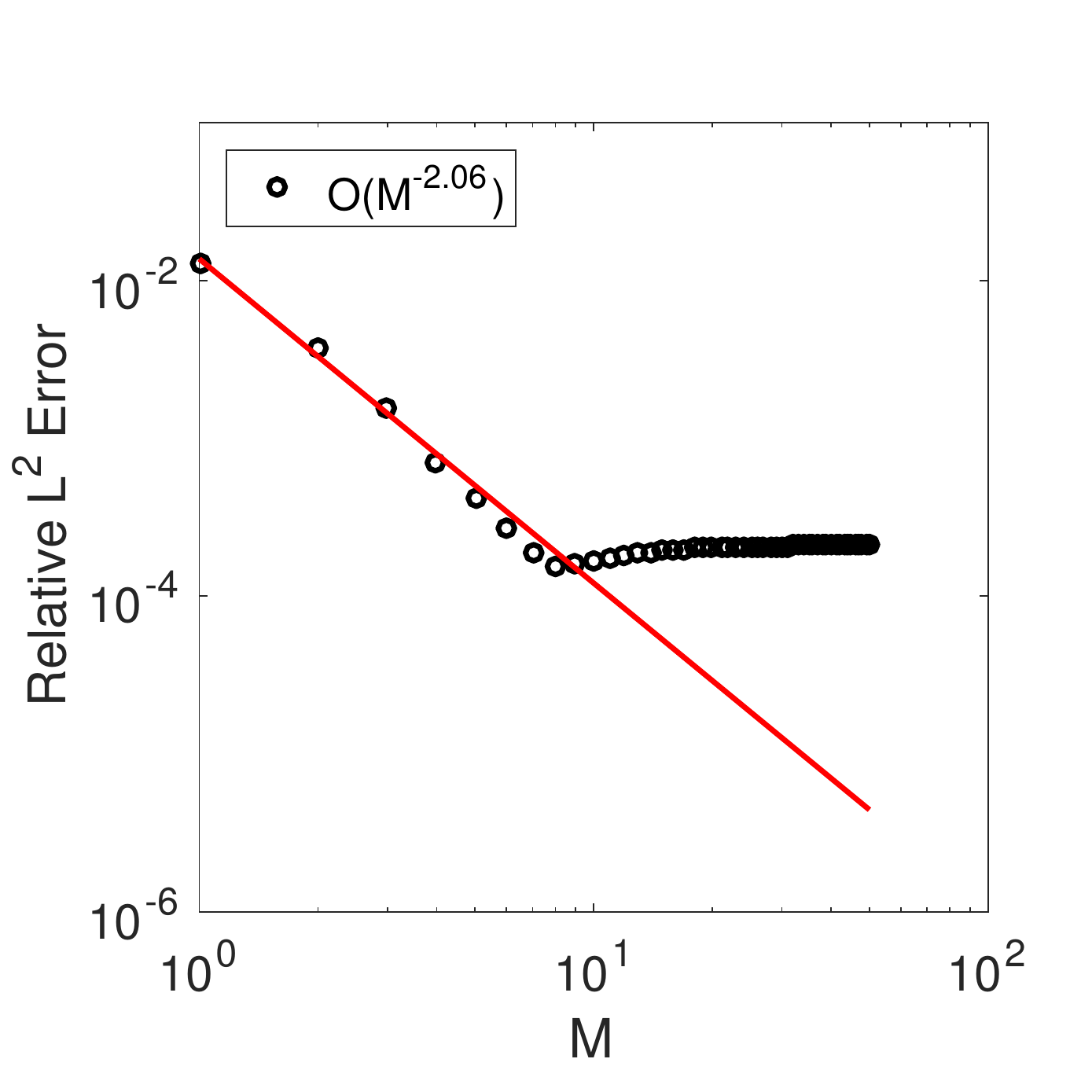}}
\end{minipage}
\begin{minipage}{.5\linewidth}
\centering
\subfloat[]{\label{fig:fig_b}\includegraphics[scale=.35]{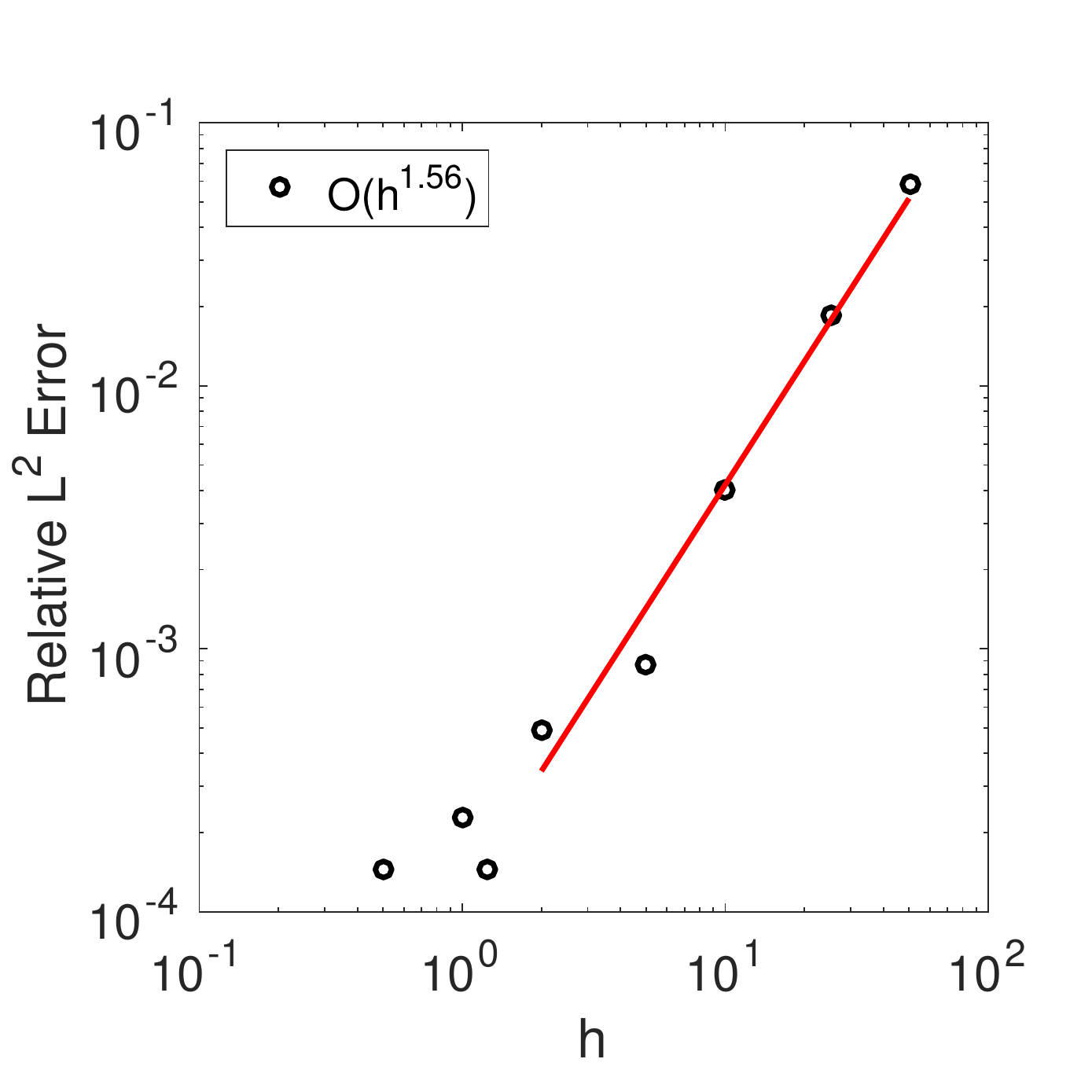}}
\end{minipage}
\begin{minipage}{.5\linewidth}
\centering
\subfloat[]{\label{fig:fig_c}\includegraphics[scale=.35]{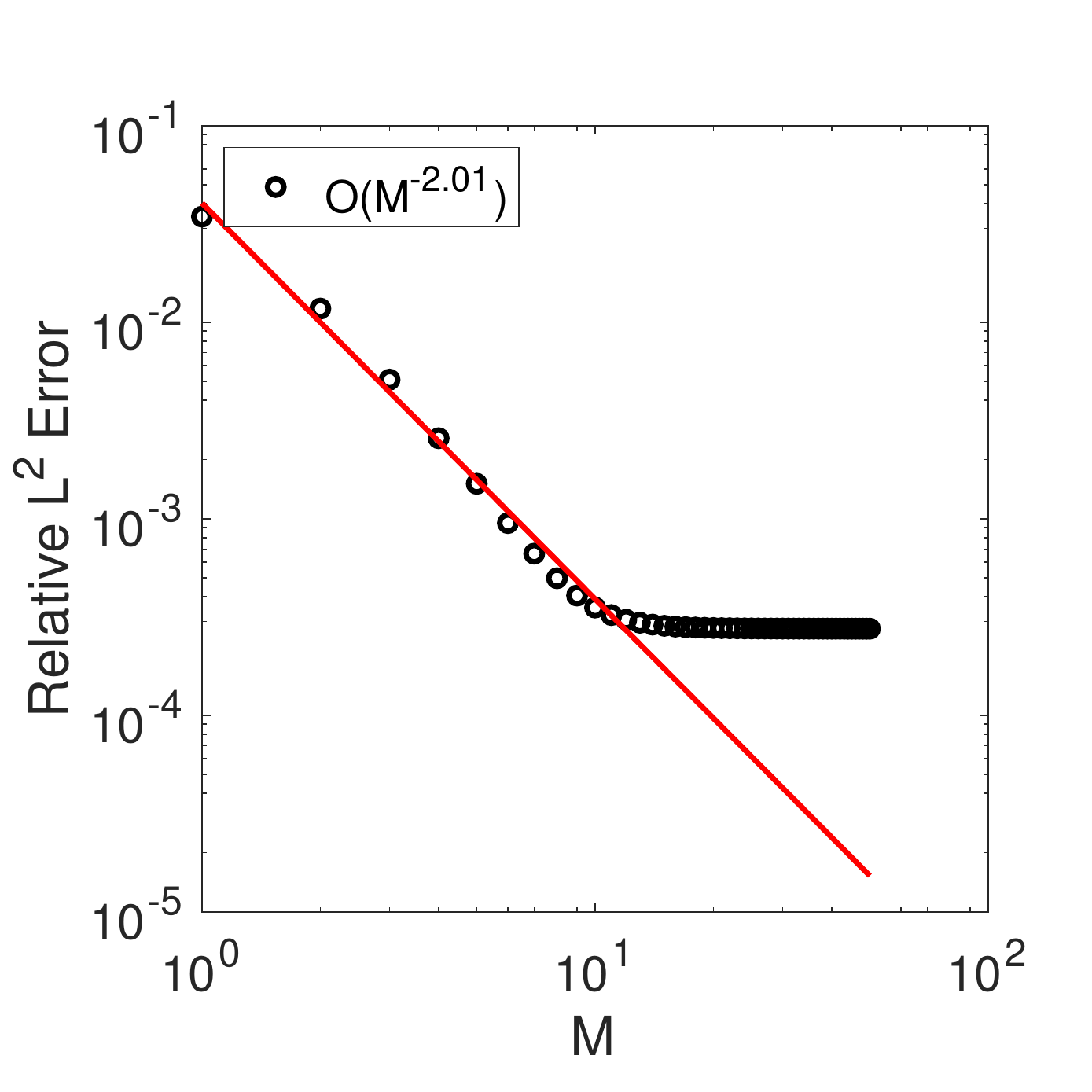}}
\end{minipage}
\begin{minipage}{.5\linewidth}
\centering
\subfloat[]{\label{fig:fig_d}\includegraphics[scale=.35]{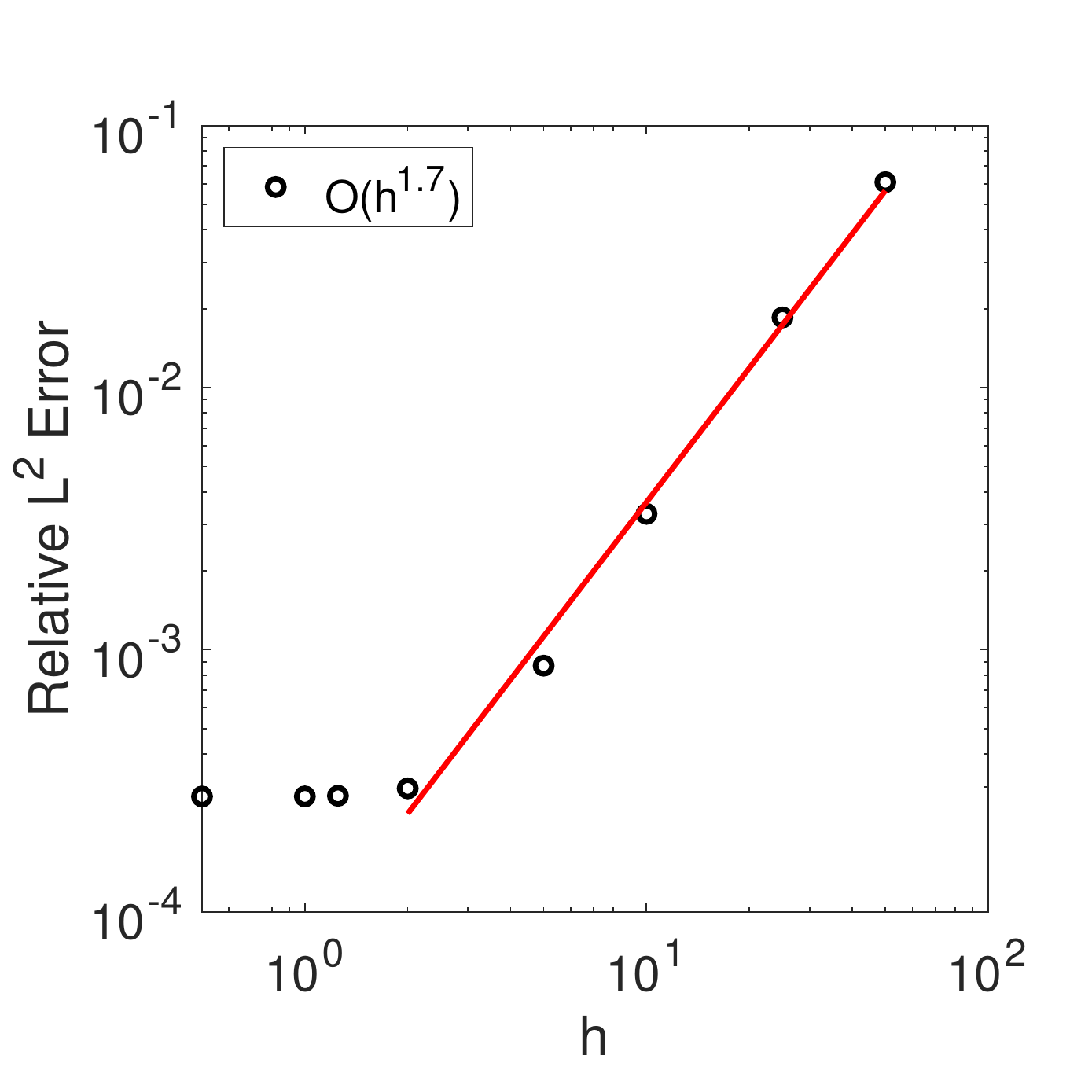}}
\end{minipage}
\caption{Convergence plots comparing the RCWA solution to a highly refined FEM solution. In (b) and (d), the number of retained Fourier modes was fixed as $2M+1 =101$. Slice thickness $h$ was allowed to change, where $h\in \{1/2,1,1.25,2,5,10,25,50 \}$~nm. In (a) and (c), the slice thickness $h=1$~nm was fixed and the number $2M+1$ of retained Fourier modes was allowed to change with $M=1,2,\cdots,50$. In all cases the error saturates around $10^{-4}$.}
\label{con}
\end{figure}

\section{Conclusion}
In this paper we studied the convergence properties of the 2D RCWA   for $s$-polarized incident light. Our analysis relies on the fact that the RCWA solution solves the appropriate variational problem, and therefore we borrowed techniques from the analysis of the FEM. Since the RCWA discretizes the solution in two different ways, we provided theorems for the convergence of the method in terms of
the number of retained Fourier modes and   slice thickness. Our analysis assumes a non-trapping domain, which is not always true for many common RCWA applications. As we commented earlier in the paper, our theory also predicts convergence in the trapping case, as long as both continuity constants in the \textit{a-priori} estimates for problems \eqref{eqn:problem1} and \eqref{eqn:problem2} are bounded independent of $h$. For problem \eqref{eqn:problem2}, the continuity constant must be bounded independent of $h$.
%
%

\begin{thebibliography}{99}
\bibitem{Gaylord} M.G. Moharam, E.B. Grann, D.A. Pommet, and T.K. Gaylord, ``Formulation for stable and efficient implementation of the rigorous coupled-wave analysis of binary gratings," J. Opt. Soc. Am. A \textbf{12}(5), pp. 1068-1076, 1995.


\bibitem{Faryad}
M. Faryad and A. Lakhtakia, ``Grating-coupled excitation of multiple surface plasmon-polariton waves," Phys. Rev. A \textbf{84}(3), art. no. 033852, 2011.

\bibitem{Akhlesh2}
J.A. Polo Jr., T.G. Mackay, and A. Lakhtakia, \textit{Electromagnetic Surface Waves: A Modern Perspective}, Elsevier, Waltham, MA, USA, 2013.

\bibitem{Gaylord3}
M.G. Moharam, D.A. Pommet, E.B. Grann, and T.K. Gaylord, ``Stable implementation
of the rigorous coupled-wave analysis for surface-relief gratings: enhanced
transmittance matrix approach," J. Opt. Soc. Am. A \textbf{12}(5), pp. 1077-1086, 1995.

\bibitem{coupled}
H. Kogelnik, ``Coupled wave theory for thick hologram gratings," Bell Syst. Tech.
J. \textbf{48}(9), pp. 2909-2947, 1969.

\bibitem{Gaylord2}
M.G. Moharam and T.K. Gaylord, ``Rigorous coupled-wave analysis of planar grating diffraction," J. Opt. Soc. Am. \textbf{71}(7), pp. 811-818, 1981.


\bibitem{Li}
L. Li, ``Use of Fourier series in the analysis of discontinuous periodic structures,"
J. Opt. Soc. Am. A \textbf{13}(9), pp. 1870-1876, 1996.

\bibitem{Homola}
J. Homola (Ed.), \textit{Surface Plasmon Resonance Based Sensors}, Springer, Heidelberg, Germany, 2006.

\bibitem{LMAnderson}
L.M. Anderson, ``Harnessing surface plasmons for solar energy conversion," Proc. SPIE
\textbf{408}(1), pp. 172--178, 1983.


\bibitem{Solcore}
D. Alonso-\'Alvarez, T. Wilson, P. Pearce, M. F\"uhrer, D. Farrell, and N. Ekins-Daukes,  ``Solcore: a multi-scale, Python-based library for modelling solar cells and semiconductor materials," J. Comput. Electron. \textbf{17}(3), pp. 1099-1123, 2018.

\bibitem{Hench}
J. J. Hench and Z. Strako\v{s}, ``The RCWA method--A case study with open questions and perspectives of
algebraic computations," Electron. Trans. Numer. Anal. \textbf{31}, pp. 331-357, 2008.

\bibitem{Guenther}
B. D. Guenther, \textit{Modern Optics}, Wiley, USA, 1990.

\bibitem{Shuba1}
M.V. Shuba, M. Faryad, M.E. Solano, P.B. Monk, and A. Lakhtakia,
``Adequacy of the rigorous coupled-wave approach
for thin-film silicon solar cells with periodically
corrugated metallic backreflectors: spectral
analysis,"
J. Opt. Soc. Am. A \textbf{32}(7), pp. 1222-1230, 2015.

\bibitem{Faiz} F. Ahmad, T.H. Anderson, P.B. Monk, 
and A. Lakhtakia, ``Optimization of light trapping in ultrathin nonhomogeneous 
Cu$1-\xi$Ga$_\xi$Se$_2$ solar cell backed by 1D periodically corrugated backreflector," 
Proc. SPIE \textbf{10731}(1),  art. no. 107310L, 2018.

\bibitem{Tom}
T.H. Anderson, B.J. Civiletti, P.B. Monk and A. Lakhtakia, ``Combined optoelectronic simulation and optimization of solar cells." (in preparation).

\bibitem{AndersonSch}
T.H. Anderson, A. Lakhtakia, and P.B. Monk,
``Optimization of nonhomogeneous indium-gallium nitride
Schottky-barrier thin-film solar cells,"
J. Photon. Energy \textbf{8}(3), art. no. 034501, 2018.

\bibitem{DeSanto}
J.A. DeSanto, ``Scattering by rough surfaces," in: R. Pike and P. Sabatier (Eds.), \textit{Scattering: Scattering and Inverse Scattering in Pure and Applied Science}, 
pp. 15-36, Academic Press, San Diego, CA, US, 2002.

\bibitem{Monk}
S.N. Chandler-Wilde, P. Monk, and M. Thomas, ``The mathematics of scattering by unbounded, rough, inhomogeneous layers," J. Comput.   Appl. Math. \textbf{204}(2), pp. 549-559, 2007.


\bibitem{Lechleiter}
A. Lechleiter and S. Ritterbusch, ``A variational method for wave scattering from penetrable rough layers," IMA J. Appl. Math. \textbf{75}, pp. 366-391, 2010.

\bibitem{GT}
D. Gilbarg and N.S. Trudinger, 
\textit{Elliptic Partial Differential Equations of Second Order}, Springer, Berlin, Germany, 1998.


\bibitem{Schatz}
A.H. Schatz, ``An observation concerning Ritz--Galerkin methods with indefinite bilinear forms,"  Math. Comput. \textbf{28}(128), pp. 959-962, 1974.

 
\bibitem{NGSolve}
J. Sch{\"o}berl, Netgen/NGsolve€ https://ngsolve.org, 2018.

\bibitem{Chen}
Z. Chen and H. Wu, ``An adaptive finite element method with perfectly matched absorbing layers for the wave scattering by periodic structures," SIAM J. Numer. Anal. \textbf{41}(3), pp. 799-826, 2003.

\bibitem{ZZ}
M. Ainsworth, J.Z. Zhu, A.W. Craig, and O.C. Zienkiewicz, ``Analysis of the Zienkiewicz--Zhu 
\textit{a-posteriori} error estimator in the finite element method," Int. J. Numer. Math. Eng. \textbf{28}(9), pp. 2161-2174, 1989.
 
\bibitem{A-Bao}
H. Ammari and G. Bao, ``Maxwell's equations in periodic chiral structures," Math. Nachr.
\textbf{251}(1), pp. 3-18, 2003.
 
\bibitem{BrennerScott}
L.R. Scott and S. Brenner,
\textit{The Mathematical Theory of Finite Element Methods}, Springer, New York, USA, 2008.

\bibitem{crystals}
W. D{\"{o}}rfler, A. Lechleiter, M. Plum, G. Schneider and C. Wieners,
\textit{Photonic Crystals: Mathematical Analysis and Numerical Approximation}, Birkh{\"{a}}user, 
Basel, Switzerland, 2011.



\end{thebibliography}
%

\end{document}